\documentclass[10pt, a4paper]{amsart}

\usepackage[T1]{fontenc}
\usepackage[utf8]{inputenc}
\usepackage{amsmath, amssymb, amsfonts, amsthm}
\usepackage[margin=2.0cm]{geometry}
\usepackage[]{setspace}
\usepackage[usenames,dvipsnames]{color}
\usepackage[unicode]{hyperref}
\hypersetup{
    urlcolor=blue,
    colorlinks=true,
    linkcolor= blue,
    citecolor=blue,
}
\usepackage{bbm}
\usepackage{bold-extra}
\usepackage[all]{xy}


\usepackage{enumitem}
\setlist[itemize]{label=-}
\setlist[enumerate]{label={\bf(\roman*)}, itemsep=1ex,leftmargin=1.4cm,topsep=1ex}
\setlist[enumerate,2]{label={\bf(\alph*)}, itemsep=1ex,leftmargin=0.5cm,topsep=1ex}
\setlist[enumerate,3]{label={\bf\roman*}., itemsep=1ex,leftmargin=0.5cm,topsep=1ex}

\theoremstyle{plain}
\newtheorem{theorem}{Theorem}[section]
\newtheorem*{main_theorem}{Main Theorem}
\newtheorem{proposition}[theorem]{Proposition}
\newtheorem{lemma}[theorem]{Lemma}
\newtheorem{Conj}[theorem]{Conjecture}
\newtheorem{corollary}[theorem]{Corollary}
\newtheorem{fact}[theorem]{Fact}
\newtheorem{task}[theorem]{Task}

\theoremstyle{definition}
\newtheorem{example}[theorem]{Example}
\newtheorem{definition}[theorem]{Definition}

\theoremstyle{remark}
\newtheorem{claim}{Claim}[theorem]
\newtheorem{remark}[theorem]{Remark}

\newenvironment{claimproof}[1][\proofname]
  {%
    \proof[#1]%
  }
  {%
    \endproof%
  }

\usepackage{tikz}
\usetikzlibrary{shapes.geometric, arrows}
\tikzstyle{startstop} = [rectangle, rounded corners, minimum width=3cm, minimum height=1cm,text centered, draw=black, fill=red!30]
\tikzstyle{io} = [trapezium, rounded corners, trapezium left angle=70, trapezium right angle=110, minimum width=3cm, minimum height=1cm, text centered, draw=black, fill=blue!30]
\tikzstyle{process} = [rectangle, rounded corners, minimum width=3cm, minimum height=1cm, text centered, draw=black, fill=orange!30]
\tikzstyle{decision} = [rectangle, rounded corners, minimum width=3cm, minimum height=1cm, text centered, draw=black, fill=green!30]
\tikzstyle{arrow} = [thick,->,>=stealth]

\newcommand\val{\mathrm{val}}
\newcommand\oag{\mathrm{oag}}
\newcommand\res{\mathrm{res}}

\newcommand\ring{\mathrm{ring}}
\newcommand\Lring{\mathcal{L}_{\ring}}
\newcommand\Lval{\mathcal{L}_{\val}}
\newcommand\Loag{\mathcal{L}_{\oag}}

\providecommand{\rmDelta}{\mathrm{\Delta}}

\providecommand{\rmGamma}{\mathrm{\Gamma}}

\newcommand\IM{{\bf(Im)}}
\newcommand\SE{{\bf(SE)}}

\title[Characterizing NIP henselian fields]{\Large\rm Characterizing NIP henselian fields}
\author{Sylvy Anscombe and Franziska Jahnke}
\address{Universit\'{e} Paris Cit\'{e} and Sorbonne Universit\'{e}, CNRS, IMJ-PRG, F-75013 Paris, France}
\email{sylvy.anscombe@imj-prg.fr}
\address{Institute for Logic, Language and Computation,
Universiteit van Amsterdam,
1090 GE Amsterdam, Netherlands
}
\address{Institut f\"{u}r Mathematische Logik und Grundlagenforschung,
Universit\"at M\"{u}nster,
Einsteinstr. 62,
48149 M\"{u}nster,
Germany}
\email{franziska.jahnke@uni-muenster.de}

\begin{document}
\begin{abstract}
 In this paper, we characterize NIP henselian valued fields modulo
 the theory of their residue field, both in an algebraic and in a model-theoretic
 way.
 Assuming the conjecture that every infinite NIP field is either separably
 closed, real closed or admits a non-trivial henselian valuation, this
 allows us to obtain a characterization of all theories of NIP fields.
 \end{abstract}
\maketitle

\newcommand\maincasea{\bf(a)}
\newcommand\maincaseai{\bf(a.i)}
\newcommand\maincaseaii{\bf(a.ii)}
\newcommand\maincaseb{\bf(b)}
\newcommand\maincasebi{\bf(b.i)}
\newcommand\maincasebii{\bf(b.ii)}
\newcommand\maincasebiii{\bf(b.iii)}
\newcommand\maincasec{\bf(c)}
\newcommand\maincaseci{\bf(c.i)}
\newcommand\maincasecii{\bf(c.ii)}

\newcommand\Shelahcasea{\rm\textbf{(\textsc{A})}}
\newcommand\Shelahcaseb{\rm\textbf{(\textsc{B})}}
\newcommand\Shelahcasec{\rm\textbf{(\textsc{C})}}
\newcommand\Shelahcaseci{\rm\textbf{(\textsc{C}.i)}}
\newcommand\Shelahcasecii{\rm\textbf{(\textsc{C}.ii)}}

\makeatletter
\newcommand{\listintertext}{\@ifstar\listintertext@\listintertext@@}
\newcommand{\listintertext@}[1]{
  \hspace*{-\@totalleftmargin}#1}
\newcommand{\listintertext@@}[1]{
  \hspace{-\leftmargin}#1}
\makeatother

\newcommand\MAINSTATEMENT{
        \begin{enumerate}[leftmargin=1.4cm]
            \item[\bf(1)] $Kv$ is NIP.
            \item[\bf(2)] Either 
                \begin{align*}
                \text{\maincasea}& 
                \left\{\begin{array}{ll}
                    \text{\maincaseai} & \text{$(K,v)$ is of equal characteristic, and}\\
                    \text{\maincaseaii} & \text{$(K,v)$ is trivial or separably defectless Kaplansky;}
                \end{array}\right.
                \intertext{or}
                \text{\maincaseb}&
                \left\{\begin{array}{ll}
                    \text{\maincasebi} & \text{$(K,v)$ has mixed characteristic $(0,p)$, and}\\
                    \text{\maincasebii} & \text{$(K,v_{p})$ is finitely ramified, and}\\
                    \text{\maincasebiii} & \text{$(Kv_{p},\bar{v})$ is trivial or separably defectless Kaplansky;}
                \end{array}\right.
                \intertext{or}
                \text{\maincasec}&
                \left\{\begin{array}{ll}
                    \text{\maincaseci} & \text{$(K,v)$ has mixed characteristic $(0,p)$, and}\\
                    \text{\maincasecii} & \text{$(Kv_{0},\bar{v})$ is defectless Kaplansky.}
                \end{array}\right.
                \end{align*}
        \end{enumerate}
}

\section{Introduction}
Since Macintyre showed in the early seventies that infinite
$\omega$-stable fields are algebraically closed (\cite{Mac71}), 
the question of
whether key model-theoretic tameness properties coming from 
Shelah's classification theory (like stability, simplicity, NIP) 
correspond to natural algebraic definitions when applied to fields
has been studied extensively.
The most prominent instance is the Stable Fields Conjecture, predicting that 
any infinite stable field is separably closed. In 1980, 
Cherlin and Shelah generalized Macintyre's result to superstable fields (\cite{CS80}),
but despite much effort, no further progress was made for a long time.
Very recently, the Stable Fields Conjecture was solved in the special case of \emph{large}
stable fields by Johnson, Tran, Walsberg, and Ye (\cite{JTWY}), using the newly-introduced 
\'etale-open topology.
Nevertheless, the Stable Fields Conjecture in full generality still seems to be far beyond our reach.
Its generalization to NIP fields has received much attention:
\begin{Conj}[Conjecture on NIP fields] \label{sh}
	Let $K$ be an infinite NIP field. Then $K$ is separably closed, real closed
	or admits a non-trivial henselian valuation.
\end{Conj}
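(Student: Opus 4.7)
The plan is to let $K$ be an infinite NIP field and assume that $K$ is neither separably closed nor real closed; the goal is then to produce a non-trivial henselian valuation on $K$. I would begin by reducing to the case where $K$ is perfect (since over an imperfect field of characteristic $p$, the imperfection degree gives additional arithmetic handles that can be dealt with separately), and then split into cases depending on whether $K$ has a suitable Galois extension of small index, since this controls which henselianity criteria are available.

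The first main tool I would invoke is the Galois-theoretic detection of henselian valuations due to Engler, Koenigsmann, and others: the existence of a non-trivial henselian valuation on $K$ (with prescribed behaviour of the residue field) is equivalent to a purely group-theoretic property of the absolute Galois group $G_K$, and such a valuation can in favourable cases be read off from a normal subgroup of $G_K$. This reduces the problem to extracting the right amount of structure either from $G_K$ or from the first-order theory of $K$. The second main tool is Prestel--Ziegler's theory of t-henselian topologies: if $K$ carries a definable non-discrete V-topology, then (after disposing of the algebraically closed and real closed cases) that topology is induced by a non-trivial henselian valuation or absolute value. The strategy would therefore be to use NIP---through generically stable or invariant types, or through dp-rank where it is finite---to construct a definable family of neighbourhoods of zero that refines to a V-topology, and then apply Prestel--Ziegler.

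The main obstacle, and the reason the conjecture remains open, is the passage from finite to infinite dp-rank. Johnson's deep work settles the dp-minimal case and, building on it, the dp-finite case; in both, finiteness of dp-rank provides enough combinatorial control over definable sets to carry out the definable-topology construction (and to rule out the pathologies that arise in pure characteristic $p$). For arbitrary NIP fields, where the dp-rank may be unbounded, no comparable tool is currently known, and the finitary arguments that underpin the dp-finite case break down. Any successful proof would therefore need either a new source of additive definable structure in unbounded dp-rank, or a genuinely Galois-theoretic argument bypassing definability altogether; I would expect the attempt to stall at exactly this point, since this is precisely where fundamentally new ideas seem to be required.
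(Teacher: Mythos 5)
The statement you were asked about is a \emph{conjecture}, not a theorem: the paper states it as an open problem, attributes it loosely to Shelah, and notes that it is known only in the special cases of dp-minimal fields and positive-characteristic fields of finite dp-rank, both due to Johnson. The paper does not prove it, and indeed uses it throughout as an unproved hypothesis (e.g.\ in Theorem \ref{thm:3}). Your response correctly recognizes this. Your survey of the available tools is also accurate: the route via definable non-discrete V-topologies and Prestel--Ziegler t-henselianity is indeed the backbone of Johnson's dp-minimal and dp-finite results, and the Galois-theoretic detection of henselian valuations (Engler, Koenigsmann, and others) is a genuine complementary toolkit. You correctly locate the obstruction at the passage from finite to unbounded dp-rank, where the combinatorial control underlying Johnson's arguments is lost and no substitute is currently known. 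There is no gap in your assessment; refusing to manufacture a proof and instead delineating exactly where the known techniques fail is the right answer for an open conjecture.
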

Conjecture \ref{sh} has many variants but no clear origin, and is usually
attributed to Shelah (who stated a closely related conjecture on 
strongly NIP fields
and asked for a similar description of NIP fields in \cite{Sh}).
The special case of fields of finite dp-rank was recently proven by Johnson
in a series of spectacular preprints culminating in \cite{Joh2}.

Apart from separably closed fields, real closed fields, and the $p$-adics plus
their finite extensions, the only currently known method to construct NIP
fields is by NIP transfer theorems in the spirit of Ax-Kochen/Ershov:
under certain algebraic assumptions, if $(K,v)$
is a henselian valued field such that the residue field $Kv$ is NIP, then
$(K,v)$ is NIP. The first such theorem was shown by Delon:
\begin{fact}[Delon, \cite{Del81}]
	Let $(K,v)$ be a henselian valued field of equicharacteristic $0$.
	Then,
	$$(K,v) \textrm{ is NIP in }\Lval \Longleftrightarrow
	Kv \textrm{ is NIP in }\Lring.$$
\end{fact}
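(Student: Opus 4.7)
I would prove Delon's theorem by establishing each direction of the equivalence separately, with the nontrivial content in the right-to-left implication.

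The forward direction is routine: the residue field $Kv$ is interpretable in $(K,v)$ as the quotient of the valuation ring $\mathcal{O}_v=\{x : v(x)\geq 0\}$ by its maximal ideal $\mathfrak{m}_v=\{x: v(x)>0\}$, both of which are $\Lval$-definable. Since NIP is preserved under interpretation, NIP of $(K,v)$ transfers to $Kv$.

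For the substantial direction, assume $Kv$ is NIP in $\Lring$. The plan is to invoke the Ax-Kochen/Ershov principle for equicharacteristic zero henselian valued fields to reduce the question of NIP to separate questions about the residue field and the value group. Concretely, I would pass to a three-sorted language with a main field sort, a residue field sort, and a value group sort (possibly with an angular component to get Pas-style quantifier elimination, which is available in equicharacteristic zero). In this language, $(K,v)$ admits relative quantifier elimination: every formula on the main sort is equivalent to a Boolean combination of formulas whose quantifiers range only over the residue field or the value group sort.

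Next I would observe that the value group $vK$ is an ordered abelian group, and all such groups are NIP by Gurevich--Schmitt. Combined with our hypothesis that $Kv$ is NIP, both "parameter sorts" are NIP as pure structures. I would then invoke the standard NIP transfer lemma for orthogonal expansions: if a theory admits relative quantifier elimination over a collection of stably embedded, pairwise orthogonal sorts, each of which is NIP in its induced structure, then the whole theory is NIP (this can be derived by translating a hypothetical shatter witness in $(K,v)$ into shatter witnesses in the residue field or the value group via the reduction of formulas). Applying this to the three-sorted expansion gives that $(K,v)$ is NIP in the three-sorted language, and then NIP in the reduct $\Lval$.

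The main obstacle is the precise bookkeeping in step three: one must check that the formula reduction is uniform in parameters so that an indiscernible sequence alternating on a formula of $(K,v)$ translates into an indiscernible sequence alternating on a formula of $Kv$ or $vK$. This is routine once one is careful about the orthogonality of the residue field and value group sorts in the Ax-Kochen/Ershov setup, but it is where all the content of the equicharacteristic zero hypothesis enters: it guarantees the existence of a section of the residue map and of an angular component, making the relevant sorts stably embedded with no interaction beyond the valuation and residue maps themselves.
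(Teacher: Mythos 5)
The paper does not prove this Fact: it is quoted from Delon \cite{Del81}, with the remark immediately afterwards that the extra hypothesis Delon needed (that the value group be NIP) is removable by Gurevich--Schmitt \cite{GS}. So there is no argument in the paper to compare against; I assess your sketch on its own terms.

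Your plan is the standard modern one and is sound in outline: pass to a sufficiently saturated elementary extension to acquire an angular component, apply Pas-style relative quantifier elimination for equicharacteristic-zero henselian valued fields, note that $vK$ is NIP by Gurevich--Schmitt and $Kv$ is NIP by hypothesis, and then transfer NIP from the orthogonal, stably embedded auxiliary sorts back up to the full structure. This is essentially the argument of \cite[Theorem A.15]{Simon}, which the paper itself cites when handling the equicharacteristic-zero case of its Main Theorem. The one place you are too casual is in invoking a ``standard NIP transfer lemma for orthogonal expansions'' and treating the translation of a shatter witness as a parenthetical. Making that precise --- pushing an indiscernible sequence through the relative quantifier elimination and verifying that stable embeddedness and orthogonality genuinely force an alternation witness into $Kv$ or into $vK$ --- is the real content of Delon's theorem; it is exactly the work packaged into the \SE/\IM machinery of \cite[Theorem 2.3]{JS19}, which the paper develops and reuses in Section 4 for its new transfer results. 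Your proposal is correct, but you should present that step as the heart of the proof rather than as bookkeeping, and you should say explicitly that the angular component may only exist after passing to an elementary extension (harmless, since NIP is a property of the complete theory).
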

Here, $\Lring = \{0,1,+,\cdot\}$ denotes the language of rings and 
$\Lval$ is a three-sorted language with sorts for the field, the residue
field and the value group (see the beginning of section \ref{basic} for a precise
definition of $\Lval$).

Note that Delon originally proved the theorem under the additional assumption
that the value group $vK$ is NIP as an ordered abelian group. It was later
shown by Gurevich and Schmidt that this holds for any ordered abelian group (\cite[Theorem 3.1]{GS}).
Several variants of Delon's theorem were proven in mixed and positive
characteristic, first by B\'elair (\cite{Bel99}) and more recently by Jahnke and Simon (\cite{JS19}).
B\'elair showed that an algebraically maximal Kaplansky field $(K,v)$
of positive characteristic is NIP in $\Lval$ if and only if its residue
field $Kv$ is NIP in $\Lring$, and that the same holds
if $(K,v)$ is finitely ramified with perfect residue field.
Jahnke and Simon generalized B\'elair's result to separably
algebraically maximal Kaplansky fields of finite degree of imperfection and
arbitrary characteristic. Conversely, they use the theorem by 
Kaplan, Scanlon and Wagner stating that NIP fields of positive characteristic
admit no Artin-Schreier extensions (\cite{KSW}) 
to show that NIP henselian valued fields
of positive characteristic are separably algebraically maximal.
The approach used by Jahnke and Simon builds on machinery developed by
Chernikov and Hils in the NTP$_2$ context (\cite{CH14}).
Also following this route, we prove what one might consider as the ultimate 
transfer theorem:
our main result is that a henselian valued field $(K,v)$
is NIP (in $\Lval$) if and only if its residue field $Kv$ is NIP (in $\Lring$)
and the valued field satisfies a list of purely algebraic conditions
(all of which are preserved under $\Lval$-elementary equivalence).
More precisely, we show the following

\begin{main_theorem}[{Theorem \ref{thm:1}}]\label{thm:main_theorem}
Let $(K,v)$ be a henselian valued field.
Then $(K,v)$ is NIP in $\Lval$ 
	if and only if both of the following hold:

\MAINSTATEMENT
\end{main_theorem}
Here, for a valuation $v$ of mixed characteristic $(0,p)$, 
we use $v_0$ to denote the finest coarsening of $v$ with residue characteristic $0$ 
and $v_p$ to denote the coarsest coarsening of $v$ with residue characteristic $p$
(see Remark~\ref{rem:sd}).

We then apply our main theorem in two different ways. In Corollary \ref{cor:3},
we show that the henselization $(K^h,v^h)$ of any NIP valued field $(K,v)$ is
again NIP. Moreover, we give a classification of NIP fields assuming that 
Conjecture \ref{sh} holds (see Theorem \ref{thm:3}).

We now give an overview over the contents of this paper.
In section
\ref{basic}, we start by introducing the valuation-theoretic notions
which appear in the main result,
then we prove several lemmas about valued fields
which are applied later in the paper. We also survey Ax-Kochen/Ershov principles, which are fundamental in the model theory of valued fields.

Next, in section \ref{sec:NIP}, we recall
the definition of NIP and prove some facts about NIP valued fields
(without assuming henselianity). These allow us to prove Theorem \ref{thm:2}
which states that any NIP valued field (not assumed to be henselian) 
satisfies both of the properties
{\bf (1)} and {\bf (2)} occurring in the Main Theorem. In particular, Theorem
\ref{thm:2}
entails the left-to-right
implication of the Main Theorem.

Section \ref{sec:trans} recalls known results about henselian NIP fields and 
contains two new NIP transfer results:
In Proposition \ref{prp:SAMK}, we show that any separably algebraically maximal Kaplansky
valued field of positive characteristic is NIP in $\Lval$ if and only if
its residue field is NIP in $\Lring$. This was previously known under the
additional assumption of finite degree of imperfection (\cite[Lemma 3.2]{JS19}). 
As a
consequence, we obtain a new proof of an unpublished result of Delon 
that the $\Lval$-theory of any separably closed valued field is NIP (Corollary
\ref{cor:SCVF}).
The second NIP transfer result contained in this section
is concerned with finitely ramified valued fields,
i.e., valued fields $(K,v)$ of mixed characteristic $(0,p)$ such that 
the interval $(0,v(p)]$ in the value group $vK$ is finite.
\label{def:fin_ram}
We show that any henselian finitely ramified valued field
with NIP residue field is NIP in $\Lval$ in
Corollary \ref{cor:fin_ram_2}.
Moreover, this also holds if we compose a henselian finitely ramified valuation with a henselian NIP valuation on the residue field (Proposition \ref{prp:fin_ram}).
These results
generalize a theorem by B\'elair stating that a henselian unramified valued field with perfect NIP residue field is NIP in $\Lval$ (\cite[Th\'{e}or\`{e}me 7.4(2)]{Bel99}).
The key ingredient is a new understanding of the model theory of
henselian unramified valued fields (where the residue field is allowed to be imperfect)
by the authors, cf.~\cite{AJ2}.

In section \ref{sec:mt}, we state and prove our Main Theorem (Theorem \ref{thm:1}).
We also give a number of examples showing that none of the clauses in the theorem is vacuous.
As a consequence of Theorem \ref{thm:1}, we show that the
henselization of any NIP valued field is again NIP (Corollary \ref{cor:3}).
This gives a NIP analogue to a result by Hasson and Halevi who showed that the henselization
of every strongly NIP (also known as strongly dependent) valued field is again 
strongly NIP (\cite{HH2}).\footnote{Note that there are many examples of NIP valued fields which 
are not strongly NIP since all strongly NIP fields are perfect.
For explicit examples, see \ref{Ex:NIP}.}

The last two sections contain variants of our Main Theorem,
although not straightforward ones.
The version presented in section \ref{sec:mtv}
has a distinctly more model-theoretic flavour:
throughout the section, given
a complete $\Lring$-theory $T_k$ of NIP fields, we describe all the
complete $\Lval$-theories of NIP henselian valued fields $(K,v)$ such that the residue field is a model of $T_k$.
This is rather easy
and unsurprising in equal characteristic 
(and even well-known in equicharacteristic $0$),
but noticeably harder in mixed characteristic.
First, we introduce some theories of henselian valued fields
of mixed characteristic and prove their completeness. 
After taking a closer look at finitely ramified fields 
(cf.~Lemma~\ref{lem:finite_ramification}),
we give the desired
characterization in Proposition \ref{prp:elem}.

In the final section, we apply this characterization to give a refinement of Conjecture \ref{sh}:
we give a list of complete $\Lval$-theories of henselian valued fields which are all NIP in $\Lval$, 
and we show that Conjecture
\ref{sh} implies that every NIP field $K$ admits a henselian valuation $v$ such that 
$(K,v)$ is a model of one of the theories listed (see Theorem \ref{thm:3}). 
This is a NIP analogue of 
a similar conjectural classification of strongly NIP
fields which was obtained by Halevi, Hasson and Jahnke (\cite{HHJ2}).

\section{Basic notions from valuation theory}
\label{basic}

In this section, we introduce the valuation-theoretic notions  
which appear in our main result (Theorem \ref{thm:1}). 
Furthermore, we prove 
a number of valuation-theoretic lemmas which will be applied in later sections.

Throughout this paper, we denote the valuation ring of a valued field $(K,v)$ by $\mathcal{O}_{v}$, the valuation ideal by $\mathfrak{m}_{v}$.
When considering valued fields
we use the three-sorted language $\Lval$ with sorts $\mathbb{K}$ for the field, $\mathbbm{k}$ for the residue field and $\mathbb{G}$ for the value group
together with an additional element for $\infty$.
On both $\mathbb{K}$ and $\mathbbm{k}$, we have the language of rings $\Lring=\{0,1,+,\cdot\}$,
and on $\mathbb{G}$ the language $\mathcal{L}_{\text{oag}} = \{0,+,<\}$ of ordered
abelian groups together with a constant symbol for $\infty$, all interpreted in the usual way.
Furthermore, there are two function symbols connecting the sorts to one another, namely
$\underline{v}:\mathbb{K}\to \mathbb{G}$
and
$\underline{\text{res}}:\mathbb{K} \to \mathbbm{k}$.
Whenever we consider a valued field $(K,v)$ as a first-order structure, we
mean the corresponding $\Lval$-structure
given by the field $K$ in the sort $\mathbb{K}$,
the residue field $Kv$ in the sort $\mathbbm{k}$,
and
the value group $vK$ with its additional element $\infty$ in the sort $\mathbb{G}$.
Naturally, $\underline{v}$ is interpreted by the valuation $v$ and $\underline{\text{res}}$ is interpreted by the residue map $\res:\mathcal{O}_{v}\to Kv$ which we extend by setting
$\text{res}(x)=0$
for $x \in K$ with $v(x)<0$.

Note that we choose this language for 
convenience since it allows us to refer to the residue field and the value group
as objects in our language. Other commonly used languages of valued fields include
the one-sorted language $\Lring \cup \{\mathcal{O}\}$, the expansion of the language
of rings by a predicate for the valuation ring, and the two-sorted language
with sorts $\mathbb{K}$ and $\mathbb{G}$ together with a map $\underline{v}:\mathbb{K}\to \mathbb{G}$, where the $\mathbb{K}$-sort is again endowed with $\Lring$ and the $\mathbb{G}$-sort
with $\mathcal{L}_\textrm{oag} \cup \{\infty\}$.
If we consider a valued field $(K,v)$ in any of these
three languages, it is biinterpretable with each of the corresponding two structures
in the other languages.
Thus, as our focus is on the question of whether the
valued field is NIP (and this is preserved under interpretability), the answer is independent from the language 
we choose.

\begin{definition}\label{def:Kaplansky}
A valued field $(K,v)$
is {\bf Kaplansky}
if 
either it is of equal characteristic zero,
or if it is of residue characteristic
$p>0$
and
\begin{enumerate}
\item
$vK$ is $p$-divisible,
\item
$Kv$ is perfect, and
\item
$Kv$ admits no proper separable algebraic extensions of degree divisible by $p$.
\end{enumerate}
\end{definition}

\begin{remark}
Equivalently,
a valued field $(K,v)$ of residue characteristic $p>0$ is
Kaplansky
if and only if
$vK$ is $p$-divisible and
$Kv$ admits no proper finite extensions of degree divisible by $p$.
\end{remark}

We now introduce both the notions of defectlessness and separable 
defectlessness. Note that if a valued field $(K,v)$ is defectless, it is always 
separably defectless;
the converse holds if we assume $K$ to be perfect, 
but does not hold in general.

\begin{definition}\label{def:defectless}
A valued field $(K,v)$ is {\bf (separably) defectless} if,
whenever $L/K$ is a 
finite (separable) field extension, 
the fundamental equality holds:
\begin{align*}
	[L:K] &= \sum_{w\supseteq v}e(w/v)f(w/v),
\end{align*}
where $w$ ranges over all prolongations of $v$ to $L$, $e(w/v)=(wL:vK)$ is the ramification degree, and $f(w/v)=[Lw:Kv]$ is the inertia degree of the extension $(L,w)/(K,v)$.
\end{definition}
 
Note that defect can ony occur in positive
residue characteristic (cf.~\cite[Theorem 3.3.3]{EP}) The next lemma shows that defectlessness is an $\Lval$-elementary property.
\begin{lemma}\label{lem:defect_ultrapower}
There is an $\Lval$-theory $T_{d}$ which axiomatizes the class of defectless valued fields.
\end{lemma}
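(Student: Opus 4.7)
The plan is to construct, for each positive integer $n$, a single $\Lval$-sentence $\phi_{n}$ expressing the property that every finite field extension of $K$ of degree $n$ is defectless. Since defectlessness is, by definition, the conjunction of the fundamental equality over all finite extensions, the theory $T_{d} := \{\phi_{n} \mid n \geq 1\}$ will then axiomatize the class of defectless valued fields. Every degree-$n$ extension of $K$ arises as $L_{f} := K[X]/(f)$ for some monic irreducible polynomial $f \in K[X]$ of degree $n$, parameterized by its $n$ coefficients $(a_{0}, \ldots, a_{n-1}) \in K^{n}$; being monic and irreducible are both first-order conditions in $\Lring$ on these coefficients.

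The central step is therefore to express ``$L_{f}/K$ is defectless'' as a first-order $\Lval$-formula in $(a_{0}, \ldots, a_{n-1})$. To this end, we use the interpretation of $L_{f}$ in $K$: the field $L_{f}$ is $K$-linearly isomorphic to $K^{n}$, and the multiplication, trace, and norm on $L_{f}$ are all $\Lring$-definable from the coefficients of $f$. Defectlessness of $L_{f}/K$ can be characterized by the existence of a \emph{valuation-independent} $K$-basis $\beta_{1}, \ldots, \beta_{n}$ of $L_{f}$, that is, a basis satisfying $w(\sum_{i} c_{i}\beta_{i}) = \min_{i} w(c_{i}\beta_{i})$ for every $c_{i} \in K$ and every prolongation $w$ of $v$ to $L_{f}$. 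One then translates this into a first-order condition in $\Lval$ by existentially quantifying over the coordinates of the $\beta_{i}$ in $K^{n}$, together with bookkeeping data (in the $\mathbb{G}$- and $\mathbbm{k}$-sorts) recording the values and residues of the $\beta_{i}$ under each prolongation.

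The chief obstacle is that the prolongations $w_{1}, \ldots, w_{r}$ of $v$ to $L_{f}$ are not themselves elements of the structure $(K,v)$, so one cannot naively quantify over them. This is circumvented by the observation that a valuation on the $n$-dimensional $K$-algebra $L_{f}$ extending $v$ is determined by a finite amount of data: its values on a fixed $K$-basis, which lie in an extension of $vK$ whose index divides $n$, and the corresponding residues, which lie in an extension of $Kv$ of degree dividing $n$. Moreover, the possible ``shapes'' $(r, e_{1}, f_{1}, \ldots, e_{r}, f_{r})$ of prolongation data with $\sum_{i} e_{i}f_{i} = n$ are finite in number, so $\phi_{n}$ takes the form of a finite disjunction over these shapes, within each of which the existence of a valuation-independent basis becomes a first-order condition in $\Lval$. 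After this encoding, collecting the $\phi_{n}$ for all $n \geq 1$ yields the desired theory $T_{d}$.
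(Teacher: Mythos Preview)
Your plan correctly isolates the central difficulty---quantifying over the prolongations $w$ of $v$ to $L_{f}$---but the proposed workaround does not close the gap.

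First, the claim that ``a valuation on the $n$-dimensional $K$-algebra $L_{f}$ extending $v$ is determined by its values on a fixed $K$-basis'' is false as stated: knowing $w(\beta_{1}),\ldots,w(\beta_{n})$ does not determine $w(\sum c_{i}\beta_{i})$ unless the $\beta_{i}$ are already valuation-independent for $w$, which is precisely what you are trying to establish. Second, even if one grants some encoding, the data $w(\beta_{i})$ lie in an extension of $vK$ of index dividing $n$, and the residues lie in an extension of $Kv$; neither of these extensions is available in the $\mathbb{G}$- or $\mathbbm{k}$-sorts of $\Lval$, so ``recording'' them there needs a further interpretation argument you do not supply. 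Third, and most seriously, the condition you actually need is universal in the prolongations: ``the basis is valuation-independent for \emph{every} $w$ extending $v$''. Existentially quantifying over bookkeeping data, together with a finite disjunction over shapes $(r,e_{1},f_{1},\ldots)$, yields at best ``there exist data that look like a family of prolongations'', not ``for all actual prolongations''. Nothing in your sketch guarantees that the witnesses you quantify over exhaust the set of prolongations, so the resulting sentence $\phi_{n}$ can be satisfied in valued fields with defect.

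The paper's proof addresses exactly this point by a different mechanism: it restricts to finite \emph{normal} extensions (which suffices for defectlessness) and then invokes an argument of Johnson \cite[Lemma~9.4.8]{Joh1} to show that, in the expanded structure $(K,L,v)$, the family $\{\mathcal{O}_{w}\mid w\text{ prolongs }v\}$ of valuation rings on $L$ is uniformly definable with parameters. Once the prolongations form a definable family, one can quantify over them directly and write down the fundamental equality as an $\Lval$-scheme. This definability of the prolongations is the key non-trivial input that your sketch is missing; without it (or a genuine substitute), the argument does not go through.
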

\begin{proof}
Note that a valued field is defectless if and only if the fundamental inequality holds for all finite normal extensions.
For convenience, we fix a valued field $(K,v)$, and let $n\in\mathbb{N}$.
First, there is a standard method to uniformly interpret in $K$ the family of normal extensions $L/K$ of degree $n$:
one quantifies over tuples which form the coefficients of the minimal polynomial of a generator of such an extension.
We fix one such tuple $(c_{0},\dots,c_{n-1})$, corresponding to a normal extension $L/K$ of degree $n$.
We view the whole as an $\Lval^{1}$-structure $(K,L,v)$, where $\Lval^{1}$ is the expansion of $\Lval$ by an additional sort $\mathbb{L}$ equipped with $\Lring$ and interpreted by $L$, as well as a distinguished embedding of $K$ into $L$.
Next we show that in $(K,L,v)$ the family of valuation rings on $L$ corresponding to prolongations of $v$ is definable using the parameters $(c_{0},\dots,c_{n-1})$.
For this we use an argument of Johnson, specifically the proof of \cite[Lemma 9.4.8]{Joh1}.
The only adjustment we need to make to Johnson's argument is that in our case $\mathcal{O}_{v}$ is definable (not only $\vee$-definable), and so the second condition in \cite[Claim 9.4.9]{Joh1} is definable (not only type-definable).
The rest of the argument goes through verbatim, and it follows that $\mathcal{O}_{w}$ is definable.
More precisely, there are parameters $b_{1},\dots,b_{m}\in L$ and a formula
$\pi(x,y_{1},\dots,y_{m},z_{0},\dots,z_{n-1})$
such that
$\pi(x,b_{1},\dots,b_{m},c_{1},\dots,c_{n-1})$
defines in $(K,L,v)$ the valuation ring $\mathcal{O}_{w}$.
Therefore 
$$
	\{\mathcal{O}_{w}\;|\;\text{$w$ prolongs $v$}\}
$$ 
is a definable family in $(K,L,v)$ using parameters $(c_{0},\dots,c_{n})$.
Finally, it is clear that there is an $\Lval^{1}$-theory which axiomatizes the class of those $(K,L,v)$ which satisfy the fundamental equality.
Combining these steps, we are done.
\end{proof}

Closely related to defectlessness is the following notion:

\begin{definition}\label{def:SAM}
A valued field $(K,v)$ is {\bf (separably) algebraically maximal} 
if it admits no proper (separable) algebraic immediate extensions.
\end{definition}

We now explain how (separable) defectlessness is connected to (separable)
algebraic maximality. If $(K,v)$ is a henselian valued (separably)
defectless field, then $(K,v)$ is already (separably) algebraically maximal. 
The converse implication fails in general, but holds in henselian
NIP valued fields.%
\footnote{By Theorem \ref{thm:2}, NIP valued fields are compositions of finitely ramified and Kaplansky valued fields, and for such fields algebraic maximality implies henselian defectlessness.
For example see \cite[Theorem 3.2]{Kuh16}.}

In the cases {\maincaseb} and {\maincasec} of Theorem \ref{thm:1}, we 
decompose a mixed characteristic valuation into two equicharacteristic
valuations and a rank-1 valuation of mixed characteristic. 
This is a standard trick 
for which we explain notation and give details below.

\begin{remark}\label{rem:sd}
Let $(K,v)$ be a valued field of mixed characteristic $(0,p)$.
First, let $\rmDelta_{p}$ denote the maximal convex subgroup of $vK$ that does not contain $v(p)$,
and let $\rmDelta_{0}$ denote the minimal convex subgroup of $vK$ that does contain $v(p)$.
So we have a chain $\rmDelta_{p}<\rmDelta_{0}\leq vK$.
Next, let $v_{p}$ be the coarsening of $v$ corresponding to $\rmDelta_{p}$, and let $v_{0}$ be the coarsening of $v$ corresponding to $\rmDelta_{0}$. We use $\bar{v}$ to denote the valuation
induced by $v$ on each of the residue fields $Kv_p$ and $Kv_0$ of the coarsenings of $v$,
and $\bar{v}_p$ to denote the valuation induced by $v_p$ on the residue field
of its coarsening $Kv_0$.
In particular, $(K,v_0)$ and $(Kv_p,\bar{v})$ are valued fields
of equicharacteristic $0$ and $p$ respectively, and $(Kv_0,\bar{v}_p)$
is a rank-1 valued field of mixed characteristic
with value group
$\Delta_{0}/\Delta_{p}$.
We will make repeated use of this decomposition, which we call the
{\bf standard decomposition}.
\end{remark}

It is illustrated by the following picture, in which the arrows represent the places corresponding to the valuations rather than the valuations themselves.

\newcommand\Kstandarddecomposition{
$$
\renewcommand{\labelstyle}{\textstyle}
\xymatrix@=4em{
K\ar@{->}[r]^{vK/\rmDelta_{0}}
&
Kv_{0}\ar@{->}[r]^{\rmDelta_{0}/\rmDelta_{p}}
&
Kv_{p}\ar@{->}[r]^{\rmDelta_{p}}
&
Kv
}
$$
}
\newcommand\Kstarstandarddecomposition{
$$
\renewcommand{\labelstyle}{\textstyle}
\xymatrix@=4em{
K^{*}\ar@{->}[r]^{v^{*}K^{*}/\rmDelta_{0}^{*}}
&
K^{*}v_{0}^{*}\ar@{->}[r]^{\rmDelta_{0}^{*}/\rmDelta_{p}^{*}}
&
K^{*}v_{p}^{*}\ar@{->}[r]^{\rmDelta_{p}^{*}}
&
K^{*}v^{*}
}
$$
}
\newcommand\Kstarstandarddecompositionfr{
$$
\renewcommand{\labelstyle}{\textstyle}
\xymatrix@=4em{
K^{*}\ar@{->}[r]^{v^{*}K^{*}/\rmDelta_{0}^{*}}
&
K^{*}v_{0}^{*}\ar@{->}[r]^{\rmDelta_{0}^{*}}&
K^{*}v^{*}
}
$$
}
\newcommand\Khstandarddecomposition{
$$
\renewcommand{\labelstyle}{\textstyle}
\xymatrix@=4em{
K^{h}\ar@{->}[r]^{v^{h}K^{h}/\rmDelta_{0}^{h}}
&
K^{h}v_{0}^{h}\ar@{->}[r]^{\rmDelta_{0}^{h}/\rmDelta_{p}^{h}}
&
K^{h}v_{p}^{h}\ar@{->}[r]^{\rmDelta_{p}^{h}}
&
K^{h}v^{h}
}
$$
}
\newcommand\Kcomposition{
$$
\renewcommand{\labelstyle}{\textstyle}
\xymatrix@=4em{
K\ar@{->}[r]^{v^{0}}\ar@/_1pc/[rr]_{v}
&
Kv^{0}\ar@{->}[r]^{\bar{v}}
&
Kv
}
$$
}
\newcommand\KLcomposition{
$$
\renewcommand{\labelstyle}{\textstyle}
\xymatrix@=4em{
L\ar@{->}[r]_{u_{0}}\ar@/^1pc/[rr]^{u}
&
Lu_{0}\ar@{->}[r]_{\bar{u}}
&
Lu
\\
K\ar@{->}[r]^{v_{0}}\ar@/_1pc/[rr]_{v}\ar@{-}[u]
&
Kv_{0}\ar@{->}[r]^{\bar{v}}\ar@{-}[u]
&
Kv\ar@{-}[u]
}
$$
}
\begin{figure}[ht]
\Kstandarddecomposition
\caption{The Standard Decomposition}
\label{fig:1}
\end{figure}

\begin{lemma}\label{lem:discrete}
For valued fields of mixed characteristic $(0,p)$,
the case distinction
\begin{enumerate}
\item $\rmDelta_{0}/\rmDelta_{p}$ is discrete
\item $\rmDelta_{0}/\rmDelta_{p}$ is not discrete
\end{enumerate}
is preserved under $\Lval$-elementary equivalence.
\end{lemma}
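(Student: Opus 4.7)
The plan is to characterise non-discreteness of $\rmDelta_0/\rmDelta_p$ by an axiom scheme of first-order $\Lval$-sentences indexed by positive integers, and then to transfer each sentence individually across $\Lval$-elementary equivalence. I begin by noting that $\rmDelta_0/\rmDelta_p$, being a quotient of convex subgroups of $vK$, is an archimedean ordered abelian group, and since it contains the distinguished positive element $\overline{vp}$, it embeds into $(\mathbb{R},+,<)$ as a non-trivial subgroup; hence it is either discrete (isomorphic to $\mathbb{Z}$) or dense in $\mathbb{R}$.

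The main step is to formulate density as a first-order axiom scheme. Density is equivalent to the infimum of positive elements being $0$, which, by testing against $\overline{vp}/n$, reads: for every $n \geq 1$ there is $\bar{\gamma} \in \rmDelta_0/\rmDelta_p$ with $0 < n \bar{\gamma} < \overline{vp}$. Unpacking through the definition of $\rmDelta_p$, the conditions $\bar{\gamma} > 0$ and $n \bar{\gamma} < \overline{vp}$ each amount to a conjunction of a condition in $vK$ together with a witness that some element lies outside $\rmDelta_p$; concretely, they are equivalent to the existence of $k_1, k_2 \geq 1$ with $k_1 \gamma \geq vp$ and $k_2(vp - n\gamma) \geq vp$ (alongside $\gamma > 0$ and $n\gamma < vp$). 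For each $(n, k_1, k_2) \in \mathbb{N}_{\geq 1}^3$ I therefore define the $\Lval$-sentence
\[
\Psi_{n, k_1, k_2} \;:=\; \exists \gamma \bigl( \gamma > 0 \wedge k_1 \gamma \geq vp \wedge n \gamma < vp \wedge k_2(vp - n\gamma) \geq vp \bigr),
\]
so that $\rmDelta_0/\rmDelta_p$ is non-discrete in $(K,v)$ if and only if for every $n$ some $\Psi_{n, k_1, k_2}$ holds in $(K,v)$, and dually $\rmDelta_0/\rmDelta_p$ is discrete if and only if for some $n$ every $\neg \Psi_{n, k_1, k_2}$ holds.

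The lemma then follows immediately: given $(K,v) \equiv_{\Lval} (K^*, v^*)$, every $\Psi_{n, k_1, k_2}$ has the same truth value in both structures, and the criterion of the previous paragraph yields preservation of both the discrete and the non-discrete case. The step requiring the most care is the translation of the type-definable condition $\cdot \notin \rmDelta_p$ into a genuinely first-order statement; this is achieved by moving the witnessing integers $k_1$ and $k_2$ out of internal quantifiers and into external indices of the sentence scheme, after which the rest is bookkeeping.
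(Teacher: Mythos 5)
Your argument is correct and follows essentially the same route as the paper: non-discreteness of $\rmDelta_{0}/\rmDelta_{p}$ is captured by a countable family of $\Lval$-sentences, each asserting the existence of a value-group element in a prescribed interval around $v(p)$, with the witnessing integers moved into external indices. The only (cosmetic) difference is that the paper uses the non-strict inequality $n\,v(x)\leq v(p)$, which makes your second witness $k_{2}$, certifying $v(p)-n\gamma\notin\rmDelta_{p}$, unnecessary, so its sentences $\varphi_{n,N_{n}}$ carry a single external index rather than two.
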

\begin{proof}
Let $(K,v)$ be a valued field of mixed characteristic $(0,p)$, viewed according to the standard decomposition.
If $\rmDelta_{0}/\rmDelta_{p}$ is not discrete,
then for each $n>0$ there exists $N_{n}>n$ and $x\in K$ such that
\begin{align*}
	0<nv(x)\leq vp\leq N_{n}v(x),
\end{align*}
	i.e.~$v(x)$ is in the interval 
	$\big[\frac{v(p)}{N_{n}},\frac{v(p)}{n}\big]\subseteq vK$.
The existence of such an element $x$ is expressed by a sentence 
$\varphi_{n,N_{n}}$ in the language of valued fields.
Indeed, $\rmDelta_{0}/\rmDelta_{p}$ is not discrete if and only if
$(K,v)\models\bigwedge_{n>0}\varphi_{n,N_{n}}$
for some function $n\longmapsto N_{n}$ such that $N_{n}>n$, for all $n$.
Therefore {\bf(i)} (and hence also {\bf(ii)}) is preserved under $\Lring$-elementary equivalence.
\end{proof}

For lack of an appropriate reference, we state and prove the following lemmas,
which ensure that several of the properties we are interested in behave well
under compositions of valuations. These will come in particularly handy when
we use the standard decomposition to study mixed characteristic valued fields.

\begin{lemma}\label{lem:henselization}
Let $(K,v)$ be a valued field such that $v$ is equal 
to a composition $\bar{v}\circ v^{0}$ of valuations,
i.e., the place corresponding to $v$ can be decomposed into two places as depicted in the following
diagram:
\Kcomposition
\begin{enumerate}
\item Assume that $L/K$ is an algebraic extension of fields, and let 
$w$ be a prolongation of $v$ to $L$.
Then, there is a unique prolongation $w^{0}$ of $v^{0}$ to $L$ 
which is a coarsening of $w$.
\item
Let $(K^{h},v^{h})$ be the henselization of $(K,v)$. Then 
$(K^{h}w^0,\bar{v}^{h})$ is the henselization of $(Kv^0,\bar{v})$, where $w^0$
denotes the unique coarsening of $v^h$ prolonging $v^0$.
\end{enumerate}
\end{lemma}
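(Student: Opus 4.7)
For (i), the plan is to exploit that $L/K$ algebraic forces the value group extension $wL/vK$ to be a torsion extension: every element of $wL$ has a nonzero integer multiple in $vK$. This makes the contraction map $\Gamma\mapsto\Gamma\cap vK$ a bijection from the lattice of convex subgroups of $wL$ onto that of $vK$: injectivity holds by convexity (if $\gamma\in\Gamma$ and $n\gamma\in\Gamma'\cap vK$ for some $n>0$, then $\gamma\in\Gamma'$), and surjectivity is witnessed by taking the convex hull in $wL$ of any convex subgroup $\rmDelta\subseteq vK$. Since coarsenings of $w$ correspond bijectively to convex subgroups of $wL$, applying this bijection to the convex subgroup of $vK$ determined by $v^{0}$ produces the unique coarsening $w^{0}$ of $w$ prolonging $v^{0}$.

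For (ii), take $w^{0}$ to be the coarsening of $v^{h}$ furnished by (i). Since $v^{h}$ is henselian on $K^{h}$, so is the coarsening $w^{0}$, and so is the induced residue valuation $\bar{v}^{h}$ on $K^{h}w^{0}$, by standard facts about henselian valuations. Using that $(K^{h},v^{h})/(K,v)$ is immediate, a direct calculation of value group and residue field shows that $(K^{h}w^{0},\bar{v}^{h})/(Kv^{0},\bar{v})$ is itself immediate and algebraic. Writing $H$ for the henselization of $(Kv^{0},\bar{v})$, the universal property of henselization, applied to the henselian extension $(K^{h}w^{0},\bar{v}^{h})$ of $(Kv^{0},\bar{v})$, yields a valued-field embedding $H\hookrightarrow K^{h}w^{0}$ over $Kv^{0}$, giving one inclusion.

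For the reverse inclusion, the plan is to use the standard description of $K^{h}$ as the directed union, inside a fixed separable algebraic closure of $K$, of those finite separable subextensions $L/K$ to which $v$ admits a unique prolongation. Fix such an $L$ with unique prolongation $w$, and let $u^{0}$ be the coarsening of $w$ furnished by (i). The crucial observation is that prolongations of $v$ to $L$ are in bijection with pairs $(u,\tilde{v})$, where $u$ is a prolongation of $v^{0}$ to $L$ and $\tilde{v}$ is a prolongation of $\bar{v}$ from $Kv^{0}$ to $Lu$, the correspondence being given by composition $v=\tilde{v}\circ u$. Hence the assumed uniqueness of the prolongation of $v$ to $L$ forces both components to be unique; in particular $\bar{v}$ prolongs uniquely from $Kv^{0}$ to $Lu^{0}$, meaning $Lu^{0}\subseteq H$. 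Passing to the directed union yields $K^{h}w^{0}\subseteq H$, completing the proof.

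The main technical ingredient is this bijective correspondence between prolongations of the composed valuation $\bar{v}\circ v^{0}$ to an algebraic extension and the combined prolongation data of its two factors; it is this correspondence, powered by (i), that does essentially all the work in (ii).
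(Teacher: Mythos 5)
Your proof of part (i) is correct and matches the paper's approach: contraction of convex subgroups along the torsion extension $wL/vK$ is a bijection, so the coarsening $w^{0}$ prolonging $v^{0}$ is unique.

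Your proof of part (ii) has a genuine gap. The forward direction is fine: $(K^{h}w^{0},\bar{v}^{h})$ is henselian, it is an immediate algebraic extension of $(Kv^{0},\bar{v})$, and the universal property of the henselization $H$ gives an embedding $H\hookrightarrow K^{h}w^{0}$ over $Kv^{0}$. But note that this alone does \emph{not} prove $H=K^{h}w^{0}$: being a henselian, immediate, algebraic extension of $(Kv^{0},\bar{v})$ does not characterize the henselization, since algebraically maximal immediate extensions are in general strictly larger. So the reverse inclusion is essential, and that is where the problem lies.

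The reverse direction relies on the claim that $K^{h}$ is the directed union of the finite separable $L/K$ to which $v$ admits a unique prolongation. This is false. For instance, take $(K,v)=(\mathbb{Q},v_{p})$ and $L=\mathbb{Q}(\sqrt{p})$: the $p$-adic valuation extends uniquely to $L$ (totally ramified, $e=2$, $f=1$), yet $\sqrt{p}\notin\mathbb{Q}_{p}$, hence $L\not\subseteq\mathbb{Q}^{h}$. In fact the family of such $L$ is not even directed under composita. Consequently your clean observation about the bijection between prolongations of $v$ and pairs $(u,\tilde{v})$, while correct and useful, does not deliver $Lu^{0}\subseteq H$, because uniqueness of the prolongation of $\bar{v}$ to $Lu^{0}$ is not sufficient to place $Lu^{0}$ inside the henselization $H$.

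The paper argues the reverse inclusion differently: given any henselian extension $(L,\bar{u})$ of $(Kv^{0},\bar{v})$, one lifts it to a henselian extension $(M,\hat{w})$ of $(K,v^{0})$ with $M\hat{w}=L$, composes to get a henselian prolongation of $v$, and then applies the universal property of $(K^{h},v^{h})$ to embed $K^{h}$ into $M$; this induces an embedding $(K^{h}w^{0},\bar{v}^{h})\hookrightarrow(L,\bar{u})$. Thus $(K^{h}w^{0},\bar{v}^{h})$ is henselian and embeds over $Kv^{0}$ into every henselian extension of $(Kv^{0},\bar{v})$, which does establish the universal property. You would need to replace your directed-union step with this kind of lifting argument, or otherwise invoke a correct characterization of $K^{h}$ (e.g., as the decomposition field).
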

\begin{proof}
\begin{enumerate}
\item
The valuation $v^{0}$ corresponds to a convex subgroup $\rmDelta$ of $vK$, and likewise
a coarsening $w^{0}$ of $w$ corresponds to a convex subgroup $\rmDelta'\leq wL$.
Such a $w^{0}$ is a prolongation of $v^{0}$ if and only if $\rmDelta'\cap vK=\rmDelta$.
Since $L/K$ is algebraic, $wL/vK$ is torsion, and the only possible choice for $\rmDelta'$ is the convex hull of $\rmDelta$ in $wL$.
\item
Since $K^h/K$ is algebraic, there is
a unique coarsening $w^0$ of $v^{h}$ which prolongs $v^0$.
Since a composition of two valuations is henselian if and only if
both components are henselian (\cite[Corollary 4.1.4]{EP}), 
$(K^{h}w^0,\bar{v}^{h})$ is henselian.
Let $(L,\bar{u})$ be an extension of $(Kv^0,\bar{v})$ which is henselian.
There exists an extension $(M,\hat{w})/(K,v^0)$ which is henselian and has residue field $M\hat{w}=L$.
Then the composition $u:=\bar{u}\circ\hat{w}$ is a henselian valuation on $M$ which prolongs $v$.
Thus $(K^{h},v^{h})$ may identified with a valued subfield of $(M,u)$.
The restriction of $\hat{w}$ to $K^{h}$ coincides with $w^0$, and induces an embedding $(K^{h}w^0,\bar{v}^{h})\subseteq(L,\hat{u})$,
which shows that $(K^{h}w^0,\bar{v}^{h})$ satisfies the universal property 
of the henselization of $(Kv^0,\bar{v})$.
\qedhere
\end{enumerate}
\end{proof}

\begin{lemma}\label{lem:defectless}
Let $(K,v)$ be a valued field such that $v$ is equal to a composition $\bar{v}\circ v^{0}$ of valuations.
Then $(K,v)$ is defectless if and only if both $(K,v^{0})$ and $(Kv^{0},\bar{v})$ are defectless.
\end{lemma}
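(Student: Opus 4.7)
The plan is to deduce both directions from the bijection between prolongations $w$ of $v$ to a finite extension $L/K$ and pairs $(w^{0},\bar{w})$, where $w^{0}$ is the unique prolongation of $v^{0}$ to $L$ coarsening $w$ (guaranteed by Lemma~\ref{lem:henselization}\I) and $\bar{w}$ is a prolongation of $\bar{v}$ to the residue field $Lw^{0}$. Under this correspondence, one has $e(w/v) = e(w^{0}/v^{0})\cdot e(\bar{w}/\bar{v})$ and $f(w/v) = f(\bar{w}/\bar{v})$.

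For the $(\Leftarrow)$ direction, I fix a finite extension $L/K$ and group the sum $\sum_{w}e(w/v)f(w/v)$ first over the $w^{0}$'s and then over the $\bar{w}$'s. Defectlessness of $\bar{v}$ collapses each inner sum to $[Lw^{0}:Kv^{0}] = f(w^{0}/v^{0})$, and defectlessness of $v^{0}$ then collapses the outer sum to $[L:K]$.

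For the $(\Rightarrow)$ direction, I rerun the same grouping but replace the two defectlessness hypotheses by the corresponding fundamental inequalities, yielding
\[
[L:K] = \sum_{w}e(w/v)f(w/v) = \sum_{w^{0}}e(w^{0}/v^{0})\sum_{\bar{w}}e(\bar{w}/\bar{v})f(\bar{w}/\bar{v}) \leq \sum_{w^{0}}e(w^{0}/v^{0})f(w^{0}/v^{0}) \leq [L:K].
\]
Defectlessness of $v$ forces equality throughout: the outer equality gives defectlessness of $v^{0}$ on $L/K$, while the inner equality gives defectlessness of $\bar{v}$ on $Lw^{0}/Kv^{0}$ for every prolongation $w^{0}$ of $v^{0}$ to $L$.

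The main obstacle is that this only controls residue extensions of the special form $Lw^{0}$, whereas the lemma demands defectlessness of $\bar{v}$ on an arbitrary finite extension $M/Kv^{0}$. I resolve this in two substeps. First, every such $M$ embeds into some $Lw^{0}$: fix a prolongation $\tilde{v}^{0}$ of $v^{0}$ to the algebraic closure $\tilde{K}$, whose residue field equals $\overline{Kv^{0}}$, lift generators $\alpha_{1},\ldots,\alpha_{r}$ of $M$ to algebraic elements $a_{i}\in\tilde{K}$ with $\res_{\tilde{v}^{0}}(a_{i})=\alpha_{i}$, and set $L:=K(a_{1},\ldots,a_{r})$ with $w^{0}:=\tilde{v}^{0}|_{L}$; then $Lw^{0}\supseteq M$. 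Second, defectlessness is inherited by intermediate residue extensions: if $\bar{v}$ is defectless on $N/Kv^{0}$ and $M\subseteq N$, then $\bar{v}$ is defectless on $M/Kv^{0}$. This I would derive by a second application of the same grouping trick, this time to the tower $N/M/Kv^{0}$, where the equality at the top forces the fundamental inequality to be an equality at both stages of the tower.
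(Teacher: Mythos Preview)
Your argument is correct and follows the same overall structure as the paper's proof: both directions proceed via the bijection from Lemma~\ref{lem:henselization}\I\ and the multiplicativity $e(w/v)=e(w^{0}/v^{0})e(\bar w/\bar v)$, $f(w/v)=f(\bar w/\bar v)$, together with repeated applications of the fundamental inequality.

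The one point of divergence is in the $(\Rightarrow)$ direction, where one must show that $\bar v$ is defectless on an \emph{arbitrary} finite extension $E/Kv^{0}$, not merely on residue extensions of the form $Lw^{0}/Kv^{0}$. The paper resolves this by invoking \cite[Theorem 2.14]{Kuh04}: it chooses a finite extension $(F,w^{0})/(K,v^{0})$ realising $E$ \emph{exactly} as residue field and satisfying $[F:K]=e(w^{0}/v^{0})f(w^{0}/v^{0})$; then $s=1$ and the already-established equality (\ref{eq:4}) finishes the job immediately. Your alternative avoids this citation: you only embed $M$ into some $Lw^{0}$ by lifting generators through $\widetilde{K}$, and then run a second tower argument on $N/M/Kv^{0}$ (with $N=Lw^{0}$) to push defectlessness down to the intermediate field. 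This is more self-contained and arguably more elementary, at the cost of one extra chain of inequalities; the paper's route is shorter but relies on an external realisation result. Both approaches are sound.
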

\begin{proof}
For a finite extension $L/K$ of fields,
let $u^{0}_{1},\ldots,u^{0}_{s}$ be the distinct prolongations of $v^{0}$ to $L$.
For each $i\in\{1,\ldots,s\}$, let $\bar{u}_{i,1},\ldots,\bar{u}_{i,r_{i}}$ be the distinct prolongations of $\bar{v}$ to $Lu^{0}_{i}$,
and write $u_{i,j}:=\bar{u}_{i,j}\circ u_{i}^{0}$.
It follows from Lemma \ref{lem:henselization}{\bf(i)} that $(u_{i,j}:i\leq s,j\leq r_{i})$ enumerates the set of prolongations of $v$ to $L$.
Applying the Fundamental Inequality (\cite[Theorem 3.3.4]{EP}) several times, we have the following calculation:
\begin{align}
[L:K]
    &\geq \sum_{i\leq s}e(u^{0}_{i}/v^{0})\,f(u^{0}_{i}/v^{0}) \label{eq:1} \tag{1}\\
    &\geq \sum_{i\leq s}e(u^{0}_{i}/v^{0})\,\sum_{j\leq r_{i}}e(\bar{u}_{i,j}/\bar{v})\,f(\bar{u}_{i,j}/\bar{v}) \label{eq:2} \tag{2} \\
    &= \sum_{i\leq s}\sum_{j\leq r_{i}}e(u_{i,j}/v)\,f(u_{i,j}/v), \tag*{}
\end{align}
since $e(u_{i,j}/v)=e(u_{i}^{0}/v^{0})e(\bar{u}_{i,j}/\bar{v})$ and $f(u_{i,j}/v)=f(\bar{u}_{i,j}/\bar{v})$.
If $(Kv^{0},\bar{v})$ is defectless
then
\begin{align}
	f(u_{i}^{0}/v^{0}) &= \sum_{j\leq r_{i}}e(\bar{u}_{i,j}/\bar{v})\,f(\bar{u}_{i,j}/\bar{v}), \label{eq:4} \tag{3}
\end{align}
for each $i$.
If both $(K,v^{0})$ and $(Kv^{0},\bar{v})$ are defectless,
then the inequalities in (\ref{eq:1}) and (\ref{eq:2}) are equalities,
which verifies that $(K,v)$ is defectless.
On the other hand, if $(K,v)$ is defectless,
then we have
\begin{align*}
	[L:K] &= \sum_{i\leq s}\sum_{j\leq r_{i}}e(u_{i,j}/v)\,f(u_{i,j}/v).
\end{align*}
It follows that the inequalities in (\ref{eq:1}) and (\ref{eq:2}) are equalities again.
The first of these equalities verfies that $(K,v^{0})$ is defectless,
and the second equality implies the equations (\ref{eq:4}), for each $i$.

Continuing to assume that $(K,v)$ is defectless,
it remains to verify that $(Kv^{0},\bar{v})$ is defectless, for which we consider an arbitrary finite extension $E/Kv^{0}$.
For example by \cite[Theorem 2.14]{Kuh04},
we may take a finite extension $(F,w^{0})/(K,v^{0})$ such that $Fw^{0}/Kv^{0}$ is isomorphic to $E/Kv^{0}$
and
\begin{align*}
    [F:K] &= e(w^{0}/v^{0})\,f(w^{0}/v^{0}).
\end{align*}
By identifying $L$ with $F$, and $w^{0}$ with $u_{1}^{0}$, 
we are again in the situation considered above (where now $s=1$).
We have already shown that there is equality in (\ref{eq:4}),
which verifies that $(Kv^{0},\bar{v})$ is defectless.
\end{proof}

The property of separable defectlessness does not behave under composition in the same way,
nor does the property of being henselian (and) separably defectless. In order to give an example, we introduce the standard construction of a Cohen ring over an imperfect field. Cohen rings and their
quotient fields occur at several points throughout this paper.

A \emph{Cohen ring} (see e.g.~\cite{Coh}) is a complete Noetherian local ring $A$ with maximal ideal $pA$, where $p$ is the residue characteristic of $A$.
Such a ring is {\em strict} if it is an integral domain.
For each field $k$ of characteristic $p>0$ there exists a strict Cohen ring $C[k]$ with residue field $k$, unique up to isomorphism.
Its quotient field 
then admits a complete unramified henselian valuation $v$ with 
valuation ring $C[k]$, value group $\mathbb{Z}$ 
and residue field $k$. We denote it by $C(k)$ and call it a Cohen field over $k$.
When $k$ is perfect, the Witt ring $W[k]$ and the Cohen ring $C[k]$ coincide.
Note that $C[k]$ is unique up to isomorphism, but 
-- when $k$ is imperfect -- it is not unique up to
unique isomorphism.
For a recent treatment of the algebra and model theory of Cohen rings, see \cite{AJ2}.

\begin{remark}
Let $(k,u)$ be a separably closed valued field of characteristic $p>0$ of imperfection degree\footnote{Here we adopt the convention for a field $K$ that $p^{e}=[K:K^{(p)}]$ unless this degree is infinite, in which case we simply write $e=\infty$.}
$e>0$,
so that $k$ is imperfect.
Let $(C(k),v)$ be a Cohen field over $k$.
Since $k$ is separably closed, in particular $(k,u)$ is henselian separably defectless.
Moreover, $(C(k),v)$ is maximal, thus is even henselian defectless.
Since $k$ is imperfect, it admits a proper purely inseparable extension $k'/k$, to which $u$ extends uniquely to a valuation $u'$.
Then $(k',u')/(k,u)$ is a proper immediate extension.
Therefore $(C(k'),u'\circ v')/(C(k),u\circ v)$ is a proper immediate extension of valued fields in characteristic $0$, which in particular is separable.
This shows that $(C(k),u\circ v)$ is not separably defectless, despite $u\circ v$ being the composition of two henselian separably defectless valuations.
\end{remark}

A generalization of algebraically 
maximal Kaplansky fields is given by
tame\footnote{Tameness here is a purely algebraic notion of valued fields, not to be confused with model-theoretic tameness notions like NIP.} valued fields.
An algebraic extension $(L,w)/(K,v)$ of valued fields is
{\em tame}
if
\begin{enumerate}
\item
$(wL:vK)$ is coprime to the residue characteristic,
\item
$Lw/Kv$ is separable, and 
\item
$(L,w)/(K,v)$ is defectless.
\end{enumerate}
A valued field $(K,v)$ is
{\em tame}
if all algebraic
extensions are tame.
For more details and equivalents definitions, see
\cite{Kuh16}.
We encounter tame valued fields in the final two sections of this paper.

\subsection*{Ax-Kochen/Ershov principles}
A fundamental principle in the model theory of valued fields is that in ``well-behaved'' valued 
fields, the model theory of the valued field should be determined by the model theory of its 
residue field and value group. This goes back to the
seminal work by Ax and Kochen and, independently,
Ershov, on the model theory of the $p$-adic numbers.
Our main theorem implies in particular that for all henselian NIP valued fields $(K,v)$, the valuation be decomposed
into finitely many pieces, all of which fit into some
Ax-Kochen/Ershov setting. 
There is one Ax-Kochen/Ershov principle for relative completeness 
and one for relative model completeness, both of which occur in this paper. 
Let $\mathcal{K}$ be an class of valued fields. We say that $\mathcal{K}$ satisfies 
$\mathrm{AKE}^{\equiv}$
if for all $(K,v)$, $(L,w) \in \mathcal{K}$, we have
\begin{align*}
    \underbrace{(K,v) \equiv (L,w)}_{\text{in }\mathcal{L}_\mathrm{val}}
    \Longleftrightarrow
    \underbrace{vK\equiv wL}_{\text{in }\mathcal{L}_{\mathrm{oag}}}
    \text{ and }
    \underbrace{Kv\equiv Lw}_{\text{in }\mathcal{L}_{\mathrm{ring}}}.
\end{align*}

This principle holds in case $\mathcal{K}$ is the class of henselian fields of equicharacteristic $0$
(\cite{AxKochen-I,Er65}). In equal positive characteristic $p$, it holds for the class of separably algebraically maximal Kaplansky valued fields of characteristic $p$ and fixed degree of imperfection
$e$ (\cite{Del82}),
and for
the class of tame valued fields of characteristic $p$ (\cite{Kuh16}). In mixed characteristic, it holds for unramified
henselian valued fields (\cite{Bel99}, \cite{ErshovMult}, \cite{AJ2}).

The second such principle gives criteria for an embedding to be elementary. We say that  $\mathcal{K}$ satisfies 
$\mathrm{AKE}^{\preceq}$
if for all $(K,v)$, $(L,w)\in\mathcal{K}$,
with $(K,v)\subseteq(L,w)$,
we have
\begin{align*}
    \underbrace{(K,v) \preceq (L,w)}_{\text{in }\mathcal{L}_\mathrm{val}}
    \Longleftrightarrow
    \underbrace{vK\preceq wL}_{\text{in }\mathcal{L}_{\mathrm{oag}}}
    \text{ and }
    \underbrace{Kv\preceq Lw}_{\text{in }\mathcal{L}_{\mathrm{ring}}}.
\end{align*}
This principle also holds for all the cases mentioned above, and moreover in tame valued fields
of mixed characteristic (\cite{Kuh16}) and finitely ramified henselian valued fields
(\cite{ErshovMult, ZieglerDiss}).

\section{NIP valued fields}
\label{sec:NIP}
Let $T$ be a complete $\mathcal{L}$-theory. 
Recall that a formula $\varphi(\bar{x},\bar{y})$ has the independence
property (IP) if there is a model $M \models T$ and sequences $(\bar{a}_i)_{i\in \mathbb{N}}$ in $M^{|\bar{x}|}$
and $(\bar{b}_J)_{J \subseteq \mathbb{N}}$
in $M^{|\bar{y}|}$ such that we have
$$M \models \varphi(\bar{a}_i,\bar{b}_J) \Longleftrightarrow i\in J.$$
If there is some formula with IP, we say that $T$ has IP. Otherwise, 
we say that $T$ has NIP. For an introduction to NIP theories,
see \cite{Simon}.

Throughout this paper, we are interested in NIP fields and
NIP valued fields. We call a field $K$ (respectively, a valued field $(K,v)$) NIP 
if its $\Lring$-theory (respectively, the $\Lval$-theory corresponding to $(K,v)$) 
has NIP.
If $K$ is a NIP field and $v$ is any valuation on $K$, then $(K,v)$
is not necessarily a NIP valued field. For example, the field $\mathbb{Q}_p$ is NIP
in $\Lring$ (in fact, $(\mathbb{Q}_p,v_p)$ is NIP, cf.~\cite[Corollaire 7.5]{Bel99}).
However, if $v$ is any prolongation of the $l$-adic valuation
on $\mathbb{Q}$ to $\mathbb{Q}_p$ (for $l\neq p$), then $(\mathbb{Q}_p,v)$
has IP (this is a special case of \cite[Theorem 5.3]{HHJ} as $v_p$ is $\Lring$-definable on $\mathbb{Q}_p$).
For pure fields, the only known algebraic consequence of being NIP is
a theorem of Kaplan, Scanlon and Wagner (\cite[Corollary 4.4]{KSW}): a NIP field
of characteristic $p>0$ admits no Galois extensions of degree
divisible by $p$. 
In this section, henselianity does not play a role.
Background on henselian NIP valued fields can be found at the beginning of the next section.

We now prove the `left-to-right' implication of our main
result (Theorem \ref{thm:1}), namely Theorem \ref{thm:2}. For this direction,
it is not necessary to assume henselianity. The main ingredient for this 
theorem, often implicit in our arguments, is the aforementioned theorem
by Kaplan, Scanlon and Wagner. From this, the equicharacteristic case of Theorem
\ref{thm:2} is straightforward:

\begin{proposition}\label{prop:equi}
If $(K,v)$ is NIP and of equal characteristic,
then $(K,v)$ is trivial or separably defectless Kaplansky.
\end{proposition}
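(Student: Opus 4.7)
The plan is to apply the Kaplan-Scanlon-Wagner theorem (\cite{KSW}) to $K$ as a pure field (which is NIP, being a sort in the NIP structure $(K,v)$) and to $Kv$ (which is NIP in $\Lring$ as an interpretable reduct of $(K,v)$). The equicharacteristic $0$ case is immediate: Kaplansky holds by convention, and the decomposition $L\otimes_{K}K^{h}\cong\prod_{w\mid v}L_{w}$ together with Ostrowski's theorem in the henselian equicharacteristic $0$ setting forces each factor to have trivial defect, so $(K,v)$ is defectless. So I assume equal characteristic $p>0$ and $v$ non-trivial; in particular $K$ is infinite and KSW applies to it.

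I would prove $p$-divisibility of $vK$ and perfectness of $Kv$ by the standard Artin-Schreier constructions inside $K$ itself. Given $\gamma\in vK\setminus pvK$ (which we may arrange to be negative) and $a\in K$ with $v(a)=\gamma$, a routine valuation-computation rules out roots of $X^{p}-X-a$ in $K$, so this Artin-Schreier polynomial is irreducible and generates a Galois extension of $K$ of degree $p$. Similarly, given $\bar c\in Kv\setminus(Kv)^{p}$, any lift $c\in K$ with $v(c)=0$ and any $d\in K$ with $v(d)>0$ make $X^{p}-X-c/d^{p}$ irreducible over $K$ by a parallel argument (after rescaling any putative root $x$ to $y=xd$, the residue equation forces $\bar y^{p}=\bar c$). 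Each failure thus contradicts KSW.

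For the third Kaplansky condition and for separable defectlessness, my plan is to manufacture a finite separable extension of $K^{h}$ of degree divisible by $p$ and then descend to $K$. A hypothetical proper separable algebraic extension of $Kv$ of degree divisible by $p$ lifts to an unramified extension of $K^{h}$ of the same degree by Hensel's lemma, and a hypothetical separable defect extension $L/K$ yields, via $L\otimes_{K}K^{h}\cong\prod L_{w}$, some factor $L_{w}/K^{h}$ of defect $>1$; since $K^{h}$ is henselian and of positive residue characteristic $p$, Ostrowski forces the defect to be a $p$-power, whence $p\mid[L_{w}:K^{h}]$. In either scenario I would write the resulting finite separable extension of $K^{h}$ as $K^{h}(\alpha)$ and take a finite separable subextension $K'\subseteq K^{h}$ of $K$ containing the coefficients of the minimal polynomial of $\alpha$ over $K^{h}$; this polynomial remains irreducible and separable over $K'$, so $K'(\alpha)/K$ is finite separable of degree divisible by $p$, and its Galois closure over $K$ contradicts KSW. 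The main obstacle is precisely this descent: Ostrowski's theorem and Hensel's lemma live naturally over $K^{h}$, whereas KSW lives over $K$, and we do not yet know (this being the left-to-right direction of the main theorem) that $K^{h}$ is NIP, so the finite-subextension trick inside $K^{h}/K$ is essential.
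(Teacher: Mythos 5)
Your proof is correct and rests on the same pillars as the paper's: the Kaplan--Scanlon--Wagner theorem applied to $K$, passage to the henselization $K^{h}$, Ostrowski's lemma over $K^{h}$, and a transfer back to $K$. The difference is presentational rather than structural: where the paper cites \cite[Proposition~4.1]{JS19} for the Kaplansky property and \cite[Theorem~18.2]{Endler} for the equivalence of separable defectlessness between $(K,v)$ and $(K^{h},v^{h})$, you unfold both into explicit arguments (Artin--Schreier polynomials for $p$-divisibility of $vK$ and perfectness of $Kv$, the \'etale decomposition $L\otimes_{K}K^{h}\cong\prod_{w}L_{w}$, and a finite-subextension descent replacing the paper's one-line observation that, since $K^{h}/K$ is separably algebraic, the absence of separable extensions of degree divisible by $p$ passes from $K$ to $K^{h}$).
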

\begin{proof}
In the case of equal characteristic zero, there is nothing to show.
So we suppose that $v$ is non-trivial and that $\mathrm{char}(K)=p>0$.
Then $(K,v)$ is Kaplansky by \cite[Proposition 4.1]{JS19}.
Let $(K^{h},v^{h})$ be the henselization of $(K,v)$.
By
\cite[Corollary 4.4]{KSW},
$K$ has no separable algebraic extensions of degree divisible by $p$.
Thus $K^{h}$ also has no separable algebraic extensions of degree divisible by $p$, since $K^{h}/K$ is separably algebraic.
By the fundamental equality (\cite[Theorem 3.3.3]{EP}),
$(K^{h},v^{h})$ is separably defectless.
Since a valued field is separably defectless if and only if its henselization is, by \cite[Theorem 18.2]{Endler},
$(K,v)$ is separably defectless.
\end{proof}

We now turn to the case of mixed characteristic.
We
quote the next two statements for the convenience of the reader.
\begin{lemma}\label{lem:Shelah}
Let $(K,v)$ be a NIP valued field, possibly with additional structure,
and let $v^0$ be a coarsening of $v$.
Then $(K,v^0,v)$ is NIP. Consequently, both $(K,v^0)$ and
$(Kv^0,\bar{v})$ are NIP.
\end{lemma}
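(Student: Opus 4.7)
The plan is to apply Shelah's theorem that the expansion of an NIP structure by externally definable predicates remains NIP. It suffices to show that $(K, v, v^0)$ itself is NIP: the reduct $(K, v^0)$ is then automatically NIP, and $(Kv^0, \bar{v})$ is interpretable in $(K, v, v^0)$, hence NIP as well. (Additional structure on $(K,v)$ causes no trouble here, since Shelah's theorem applies to any NIP structure.)

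The key step is to show that the valuation ring $\mathcal{O}_{v^0}$ is externally definable in $(K, v)$. The coarsening $v^0$ corresponds to the convex subgroup $\Delta = \ker(vK \twoheadrightarrow v^0 K)$, and one has $\mathcal{O}_{v^0} = \{x \in K : v(x) > -\Delta\}$. Pass to a sufficiently saturated elementary extension $(K^*, v^*) \succeq (K, v)$. Either $\Delta$ has an immediate successor $\gamma_0$ in the positive part of $vK$, in which case $\mathcal{O}_{v^0}$ is outright definable in $(K, v)$ using the parameter $\gamma_0$; or the partial type expressing ``$\gamma > \Delta$ and $\gamma < \delta$ for every $\delta \in vK$ with $\delta > \Delta$'' is consistent and realised by some $\gamma^* \in v^*K^*$. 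In the latter case, choosing $t \in K^*$ with $v^*(t) = \gamma^*$, one checks that
\[
    \mathcal{O}_{v^0} = \{x \in K : v^*(xt) \geq 0\},
\]
so $\mathcal{O}_{v^0}$ is externally definable over $(K, v)$ via the parameter $t$.

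By Shelah's theorem, the expansion $(K, v, \mathcal{O}_{v^0})$ is NIP. Since the additional sorts $Kv^0$, $v^0 K$ and the maps $v^0$, $\mathrm{res}_{v^0}$ are all interpretable from $\mathcal{O}_{v^0}$, the three-sorted structure $(K, v, v^0)$ is interpretable in $(K, v, \mathcal{O}_{v^0})$, and so is NIP. The main obstacle, modest as it is, is the external definability of $\mathcal{O}_{v^0}$; this reduces to the well-known fact that any convex subgroup of an ordered abelian group is externally definable via a single cut realisation in a saturated elementary extension.
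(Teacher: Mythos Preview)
Your proposal is correct and takes essentially the same approach as the paper: both invoke Shelah's expansion theorem to conclude that $(K,v,v^{0})$ is NIP, after which the ``in particular'' clause follows by interpretability. The paper's proof is little more than a citation (to \cite[Corollary 3.14]{Simon} and \cite[Example 2.2]{Jah19}), whereas you have supplied the missing detail, namely the external definability of $\mathcal{O}_{v^{0}}$ via a realisation of the cut determined by the convex subgroup $\rmDelta$ in a saturated extension; your case analysis and the verification that $\mathcal{O}_{v^{0}}=\{x\in K: v^{*}(xt)\geq 0\}$ are both sound.
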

\begin{proof}
This is a straightforward application of Shelah's expansion theorem, \cite[Corollary 3.14]{Simon}.
For more details see e.g.~\cite[Example 2.2]{Jah19}. The final claim
follows since both $(K,v^0)$ and $(Kv^0,\bar{v})$ are interpretable in 
$(K, v^0, v)$.
\end{proof}

In case the residue field of the coarsening is stably embedded as a pure field,
the converse to Lemma \ref{lem:Shelah} also holds:
\begin{proposition}[{\cite[Proposition 2.4]{JS19}}] \label{prop:JS}
Let $(K,v)$ be a valued field and $v^0$ a coarsening of $v$. Assume that
both $(K,v^0)$ and $(Kv^0, \bar{v})$ are NIP.
If the residue field
$Kv^0$ is stably embedded as a pure field in $(K,v^0)$, then $(K,v)$
is NIP.
\end{proposition}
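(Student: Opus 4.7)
The plan is to reduce the problem to an expansion argument on the coarsening. First, note that the valued field $(K,v)$ is bi-interpretable with the enriched structure $(K,v^{0},\bar{v})$, in which $\bar{v}$ is viewed as an additional valuation on the residue-field sort of $v^{0}$; indeed $v$ is recovered from $v^{0}$ and $\bar{v}$ by composition (as places), in the sense of Lemma \ref{lem:henselization}. Since NIP is preserved under bi-interpretability, it suffices to show that $(K,v^{0},\bar{v})$ is NIP.

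The key principle I would invoke is the following: if $M$ is NIP and $D$ is a $\emptyset$-definable set in $M$ which is stably embedded as a pure field, then for any expansion $D^{+}$ of $D$ whose $\Lring$-reduct agrees with the induced structure and which is itself NIP, the full expansion $(M,D^{+})$ — obtained by adding the new predicates on $D$ to $M$ — is again NIP. Applied to our setting, we take $M=(K,v^{0})$, stably-embedded pure field $D=Kv^{0}$, and NIP expansion $D^{+}=(Kv^{0},\bar{v})$. Each hypothesis is then exactly one of the three assumptions of the proposition, and the conclusion gives $(K,v^{0},\bar{v})$ NIP.

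The bulk of the work lies in the general principle. The approach I would take is via the indiscernible-sequence characterisation of NIP. Assume for contradiction that some formula $\varphi(x;y)$ in $(M,D^{+})$ has IP, and extract an indiscernible sequence $(a_{i})_{i<\omega}$ and a parameter $b$ witnessing IP. The reduct to $M$ is still indiscernible, and by stable embeddedness of $D$ the traces of the $a_{i}$ and $b$ on $D$ form a sequence whose $M$-type is determined by its $\Lring$-type with parameters from $D$. One then decomposes the formula $\varphi$, using stable embeddedness, into a Boolean combination of an $M$-part and a $D^{+}$-part applied to $D$-definable functions of the variables. The $M$-part alternates only finitely many times by NIP of $M$, and the $D^{+}$-part alternates only finitely many times by NIP of $(D^{+})$; together these yield that the sequence of truth values of $\varphi(a_{i};b)$ is eventually constant or eventually alternating, contradicting IP.

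The main obstacle is this last decomposition step, which I expect to be the technically delicate point: formulas in $(K,v^{0},\bar{v})$ can have quantifiers that cross between the field sort and the residue-field sort, so one needs to pre-process $\varphi$ before applying stable embeddedness. This is done cleanly by a coheir or honest-definitions argument in the spirit of the NIP/NTP$_{2}$ framework developed by Chernikov and Hils and used in \cite{CH14,JS19}; once it is in place, the remainder of the proof is a routine combination of the two NIP hypotheses.
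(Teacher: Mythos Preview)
The paper does not give its own proof of this proposition; it is quoted verbatim from \cite[Proposition~2.4]{JS19} and used as a black box. So there is no proof in the paper to compare against.

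That said, your outline is the right shape and matches the strategy actually used in \cite{JS19}: reduce to the enriched structure $(K,v^{0},\bar{v})$ by bi-interpretability, and then invoke the general expansion principle that an NIP structure with a stably embedded sort carrying only its pure (field) structure remains NIP under any NIP expansion of that sort. Your identification of the delicate step---handling formulas that quantify across both sorts---and of the tool that resolves it (honest definitions in the Chernikov--Simon/Chernikov--Hils framework) is accurate. One small caveat: your middle paragraph describes the decomposition as producing a Boolean combination of an ``$M$-part'' and a ``$D^{+}$-part'', which is slightly too optimistic as a syntactic statement; the actual argument works type-by-type (or via honest definitions over externally definable sets) rather than by a global Boolean decomposition of the formula. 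But you flag exactly this in your final paragraph, so the proposal is sound as a plan.
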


The last ingredient needed for Theorem \ref{thm:2} is the fact that a
NIP valued field has at most one coarsening with imperfect residue field.
\begin{lemma}\label{lem:imperfect}
Let $(K,v)$ be a NIP valued field.
Then $v$ has at most one coarsening with imperfect residue field.
If such a coarsening exists, then it is the coarsest coarsening $w$ of $v$ with $\mathrm{char}(Kw)>0$.
\end{lemma}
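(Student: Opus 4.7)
The plan is to leverage the equicharacteristic result Proposition \ref{prop:equi} by decomposing $v$ appropriately. First, if the residue characteristic of $v$ is zero, then every coarsening $v^{0}$ of $v$ also has residue characteristic zero (since $Kv$ is the residue field of $(Kv^{0},\bar{v})$), so every coarsening has perfect residue field and the lemma holds vacuously. So assume from now on that $v$ has residue characteristic some prime $p>0$. Then the set of convex subgroups of $vK$ not containing $v(p)$ has a maximum $\Delta_{p}$, and the corresponding coarsening $w := v_{\Delta_{p}}$ is the coarsest coarsening of $v$ with $\mathrm{char}(Kw) = p$.

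Now suppose $v^{0}$ is any coarsening of $v$ with imperfect residue field. Then $\mathrm{char}(Kv^{0}) = p > 0$, which forces the convex subgroup corresponding to $v^{0}$ to be contained in $\Delta_{p}$; that is, $v^{0}$ is a refinement of $w$. Write $v^{0} = \tilde{w}\circ w$, where $\tilde{w}$ is the induced valuation on $Kw$ with residue field $(Kw)\tilde{w} = Kv^{0}$. The task reduces to showing that $\tilde{w}$ is trivial, for then $v^{0} = w$, which simultaneously gives uniqueness and identifies the unique such coarsening as $w$.

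Applying Lemma \ref{lem:Shelah} to $(K,v)$ with the coarsening $w$, we see that $(Kw,\bar{w})$ is NIP, where $\bar{w}$ is the valuation on $Kw$ with $v = \bar{w}\circ w$. Since $\tilde{w}$ is itself a coarsening of $\bar{w}$, a second application of Lemma \ref{lem:Shelah} yields that $(Kw,\tilde{w})$ is NIP. Now $Kw$ and $(Kw)\tilde{w} = Kv^{0}$ both have characteristic $p$, so $(Kw,\tilde{w})$ is an equal characteristic $p$ valued field; Proposition \ref{prop:equi} then implies that $(Kw,\tilde{w})$ is either trivial or separably defectless Kaplansky. The Kaplansky option is ruled out, since it would force the residue field $(Kw)\tilde{w} = Kv^{0}$ to be perfect (Definition \ref{def:Kaplansky}), contradicting the hypothesis that $Kv^{0}$ is imperfect. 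Hence $\tilde{w}$ is trivial and $v^{0} = w$, as required.

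No serious obstacle arises in this argument; the only point demanding care is bookkeeping with the standard decomposition $v = \bar{w}\circ w$ and recognizing the intermediate coarsening $v^{0} = \tilde{w}\circ w$ as a valued field of equal characteristic $p$ on $Kw$, so that Proposition \ref{prop:equi} is applicable.
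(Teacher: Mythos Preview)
Your proof is correct and follows essentially the same route as the paper's: both arguments single out the coarsest coarsening $w=v_{p}$ with positive residue characteristic, reduce any other candidate coarsening to an equal-characteristic~$p$ valuation on $Kw$ via Lemma~\ref{lem:Shelah}, and then invoke Proposition~\ref{prop:equi} to force perfection of the residue field (or triviality). The only cosmetic difference is that the paper argues contrapositively (showing $u\neq v_{p}\Rightarrow Ku$ perfect), whereas you argue directly (showing $Kv^{0}$ imperfect $\Rightarrow v^{0}=w$); you also make the trivial residue-characteristic-zero case explicit, which the paper leaves implicit.
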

\begin{proof}
Assume $(K,v)$ is NIP and $\mathrm{char}(Kv)=p>0$.
Let $v_{p}$ be the coarsest coarsening of $v$ with $\mathrm{char}(Kv_{p})=p$ (note that $v_{p}$ might be the trivial valuation).
Let $u$ be any coarsening of $v$.
We claim that if $u\neq v_{p}$, then $Ku$ is perfect.
Since the valuation rings of the coarsenings of $v$ are linearly ordered by inclusion,
we have either $\mathcal{O}_{u}\subsetneq\mathcal{O}_{v_{p}}$, or vice versa.
If $\mathcal{O}_{v_{p}}\subsetneq\mathcal{O}_{u}$, $\mathrm{char}(Ku)=0$, hence $Ku$ is perfect.
If $\mathcal{O}_{u}\subsetneq\mathcal{O}_{v_{p}}$, Lemma \ref{lem:Shelah} implies that $(K,v_{p},u,v)$ is NIP, and hence so is $(Kv_{p},\bar{u})$.
Since $\bar{u}$ is non-trivial by assumption, $(Kv_{p},\bar{u})$ is separably defectless Kaplansky, by Proposition \ref{prop:equi}.
In particular, $Ku$ is perfect.
\end{proof}

We are now in a position to prove that all NIP valued fields satisfy the
properties {\bf (1)} and {\bf (2)} occurring the main theorem 
(Theorem \ref{thm:1}):
\begin{theorem}\label{thm:2}
Let $(K,v)$ be a NIP valued field.
Then both of the following hold:

\MAINSTATEMENT
\end{theorem}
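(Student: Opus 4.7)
Item (1) is immediate: $Kv$ is interpretable in $(K,v)$, so Lemma \ref{lem:Shelah} yields that $Kv$ is NIP. For item (2), the plan is to split by the characteristic of $K$. If $(K,v)$ has equal characteristic, Proposition \ref{prop:equi} applies directly and yields case {\bf(a)}. For item (3), my plan is to derive it at the end from (2), using the observation that a finite field admits separable extensions of every positive degree, hence cannot be the residue field of a Kaplansky valued field.

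For mixed characteristic $(0,p)$, I pass to the standard decomposition $K\to Kv_{0}\to Kv_{p}\to Kv$; by repeated application of Lemma \ref{lem:Shelah}, each of $(K,v_{0})$, $(K,v_{p})$, $(Kv_{0},\bar v)$, $(Kv_{0},\bar v_{p})$ and $(Kv_{p},\bar v)$ is NIP. The dichotomy, by Lemma \ref{lem:discrete}, is whether $\rmDelta_{0}/\rmDelta_{p}$ is discrete, equivalently $(K,v_{p})$ finitely ramified, or dense. In the discrete subcase, clauses {\bf(b.i)} and {\bf(b.ii)} hold by hypothesis, and {\bf(b.iii)} is obtained by applying Proposition \ref{prop:equi} to the NIP equicharacteristic-$p$ valued field $(Kv_{p},\bar v)$.

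The dense subcase targets {\bf(c)}: $(Kv_{0},\bar v)$ defectless Kaplansky. Since $\mathrm{char}(Kv_{0})=0$, defectlessness coincides with separable defectlessness here. The Kaplansky conditions on $(Kv_{0},\bar v)$ are handled as follows: (i) $Kv$ admits no separable extension of degree divisible by $p$, by Kaplan--Scanlon--Wagner applied to the NIP field $Kv$; (ii) $Kv$ is perfect, for if not, Lemma \ref{lem:imperfect} applied to $(Kv_{p},\bar v)$ together with Proposition \ref{prop:equi} forces $\bar v$ trivial on $Kv_{p}$, hence $v=v_{p}$, and a further NIP-theoretic argument then rules out an imperfect residue field in a rank-$1$ mixed-characteristic valued field with non-discrete value group (this is the principal technical step); (iii) the value group $\rmDelta_{0}$ is $p$-divisible—the subgroup $\rmDelta_{p}$ is $p$-divisible by Proposition \ref{prop:equi} on $(Kv_{p},\bar v)$, and $\rmDelta_{0}/\rmDelta_{p}$ is $p$-divisible by an analogous NIP-theoretic argument on the rank-$1$ mixed-characteristic piece $(Kv_{0},\bar v_{p})$. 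Separable defectlessness then reduces via Lemma \ref{lem:defectless}, applied to the factorisation of $\bar v$ on $Kv_{0}$ through $\bar v_{p}$, to the corresponding statements for $(Kv_{0},\bar v_{p})$ and $(Kv_{p},\bar v)$; the equicharacteristic-$p$ piece is covered by Proposition \ref{prop:equi}.

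For item (3): if $Kv$ is finite, no defectless Kaplansky valued field can have residue field $Kv$. In the equicharacteristic case, this forces case {\bf(a)} to be trivial. In mixed characteristic, the same observation forces $(Kv_{p},\bar v)$ trivial (so $v=v_{p}$) and rules out case {\bf(c)}, leaving case {\bf(b)} with $(K,v_{p})=(K,v)$ finitely ramified, as required. The principal obstacle of the whole proof lies in the dense subcase of (2): the NIP-theoretic arguments producing the $p$-divisibility of $\rmDelta_{0}/\rmDelta_{p}$ and excluding an imperfect residue field in the rank-$1$ mixed-characteristic piece $(Kv_{0},\bar v_{p})$ with non-discrete value group, both of which require genuine NIP input beyond the lemmas in the present section.
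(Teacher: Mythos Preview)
Your overall architecture matches the paper's, and your derivation of {\bf(3)} from {\bf(2)} is actually cleaner than the paper's direct citation of \cite[Proposition 5.3]{KSW}. However, in the dense subcase you leave the three crucial steps as black boxes (``a further NIP-theoretic argument'', ``an analogous NIP-theoretic argument''), and these are exactly where the proof lives. The missing idea is \emph{saturation}, not further NIP input.

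Concretely: pass to an $\aleph_{1}$-saturated $(K^{*},v^{*})\succeq(K,v)$ and consider its standard decomposition. By \cite[\S4]{AK}, the rank-$1$ piece $(K^{*}v_{0}^{*},\bar{v}_{p}^{*})$ is \emph{maximal} with value group $\rmDelta_{0}^{*}/\rmDelta_{p}^{*}\cong\mathbb{R}$. This single fact handles two of your black boxes at once: (i) $\rmDelta_{0}^{*}$ is $p$-divisible (combining $\rmDelta_{p}^{*}$ $p$-divisible with $\mathbb{R}$ divisible), and since rough $p$-divisibility is elementary, $\rmDelta_{0}$ is $p$-divisible too; (ii) the rank-$1$ piece is defectless. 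For perfectness of $Kv_{p}$, one similarly saturates $(K,v_{p})$: since $v_{p}$ is not finitely ramified, saturation produces a \emph{proper} coarsening of positive residue characteristic, and then Lemma~\ref{lem:imperfect} forces $Kv_{p}$ perfect. None of these steps is an ``NIP-theoretic argument'' in the sense you suggest; the only genuine NIP input is Kaplan--Scanlon--Wagner, already packaged in Proposition~\ref{prop:equi} and Lemma~\ref{lem:imperfect}.

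There is also a structural issue with your defectlessness reduction. You propose to apply Lemma~\ref{lem:defectless} directly to the factorisation of $\bar{v}$ on $Kv_{0}$, but this would require $(Kv_{0},\bar{v}_{p})$ to be defectless in the \emph{original} model, which need not hold (and $v_{0}$ is not definable, so you cannot transfer this from the saturated model). The paper instead assembles defectlessness of $(K^{*},v^{*})$ upstairs, transfers it to $(K,v)$ via Lemma~\ref{lem:defect_ultrapower}, and only then decomposes via Lemma~\ref{lem:defectless} to conclude that $(Kv_{0},\bar{v})$ is defectless. Note also that Proposition~\ref{prop:equi} gives only \emph{separable} defectlessness of $(Kv_{p},\bar{v})$; upgrading to defectlessness requires precisely the perfectness of $Kv_{p}$ established above.
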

\begin{proof}
Assume that $(K,v)$ is NIP.
Then $Kv$ is NIP, since it is interpretable in $(K,v)$,
so {\bf(1)} holds.
We now show that one of the cases {\maincasea}, {\maincaseb}, or {\maincasec} holds.
If $\mathrm{char}(K)=\mathrm{char}(Kv)$,
then {\maincasea} is satisfied by Proposition \ref{prop:equi}.
Assume that $\mathrm{char}(K)=0$ and $\mathrm{char}(Kv)=p>0$.
We again use the standard decomposition for $(K,v)$:
\Kstandarddecomposition

By Lemma \ref{lem:Shelah},
$(Kv_{p},\bar{v})$ is NIP.
Since $(Kv_{p},\bar{v})$ is of equal characteristic $p$, it is either trivially valued or separably defectless Kaplansky, by Proposition \ref{prop:equi}.
In particular, $\rmDelta_{p}$ is $p$-divisible.

We work with the case distinction between whether or not $\rmDelta_{0}/\rmDelta_{p}$ is discrete.
If $\rmDelta_{0}/\rmDelta_{p}\cong\mathbb{Z}$, then $(K,v_{p})$ is finitely ramified, so {\maincaseb} holds.
Otherwise, $\rmDelta_{0}/\rmDelta_{p}$ is not discrete.
We let $(K^{*},v^{*})$ be an $\aleph_{1}$-saturated elementary extension of $(K,v)$,
and consider the standard decomposition
for $(K^{*},v^{*})$:
\Kstarstandarddecomposition
By Lemma \ref{lem:discrete},
$\rmDelta_{0}^{*}/\rmDelta_{p}^{*}$ is also not discrete.
By \cite[\S 4]{AK},
$\rmDelta_{0}^{*}/\rmDelta_{p}^{*}$ is isomorphic to $\mathbb{R}$.
The argument above to show the $p$-divisibility of $\rmDelta_{p}$ also applies to $\rmDelta_{p}^{*}$.
Combining these statements, $\rmDelta_{0}^{*}$ is $p$-divisible, 
which means that $(K^{*},v^{*})$ is roughly $p$-divisible, i.e., 
that any element $\gamma \in [0,v^*(p)] \subseteq v^*K^*$ is $p$-divisible.
Since this is an elementary property, $(K,v)$ is also roughly $p$-divisible,
so $\rmDelta_0$ is $p$-divisible.
To conclude that $(K,v)$ is in case {\maincasec}, it remains to show that 
$(Kv_{0},\bar{v})$ is defectless.
We have already seen that $(Kv_{p},\bar{v})$ is separably defectless, which also applies to $(K^{*}v_{p}^{*},\bar{v}^{*})$.
Next we claim that $Kv_{p}$ is perfect.
To see this: we first pass to an $\aleph_{1}$-saturated elementary extension $(K',u')\succeq(K,v_{p})$.
Since $(K,v_{p})$ is not finitely ramified, by saturation $u'$ admits a proper coarsening $w'$ with $\mathrm{char}(K'w')>0$.
Once more applying Lemma \ref{lem:Shelah}, $(K,v_{p})$ is NIP, and hence so is $(K',u')$.
By Lemma \ref{lem:imperfect}, $K'u'$ is perfect, thus $Kv_{p}$ is perfect.
Since this applies also to $K^{*}v_{p}^{*}$,
it follows that $(K^{*}v_{p}^{*},\bar{v}^{*})$ is defectless.
By \cite[\S 4]{AK},
$(K^{*}v^{*}_{0},\bar{v}_{p}^{*})$ is maximal (i.e., admits no immediate
extensions), so in particular is henselian and defectless.
Thus, by Lemma \ref{lem:defectless},
$(K^{*}v_{0}^{*},\bar{v}^{*})$ is defectless.
Applying Lemma \ref{lem:defectless} once again,
we conclude that
$(K^{*},{v}^{*})$ is defectless.
Therefore,
by Lemma \ref{lem:defect_ultrapower},
$(K,v)$ is defectless,
and so $(Kv_{0},\bar{v})$ is defectless by Lemma \ref{lem:defectless}.
This verifies that {\maincasec} holds.
\end{proof}

\begin{remark} \label{rem:3.6}
Let $(K,v)$ be a valued field and suppose that {\bf(2)} from Theorem~\ref{thm:2} holds.
If $Kv$ is finite then $(K,v)$ is trivial or finitely ramified.
To see this,
we argue as follows.
In case
{\bf(a)},
if $v$ is non-trivial
then
$(K,v)$ is separably defectless Kaplansky.
Residue fields of such valuations are in particular closed under Artin--Schreier extensions, which finite fields are not.
So $v$ is trivial.
In case
{\bf(b)},
if
$(Kv_{p},\bar{v})$ is non-trivial,
then again $Kv$ is closed under Artin--Schreier extensions,
which it is not,
therefore $\bar{v}$ is trivial.
Thus $v_{p}=v$,
and so $(K,v)$ is finitely ramified.
Finally,
if $Kv$ is finite, case
{\bf(c)} does not occur,
as finite fields are never the residue fields of Kaplansky valuations.
\end{remark}

\section{NIP transfer from residue field to valued field}
Ax-Kochen/Ershov principles, as discussed in section 2, 
allow the transfer of properties like decidability from the theories of the residue field and
value group to that of the valued field.
%
A key observation
is that this often means that also model-theoretic tameness properties, like NIP, transfer from residue
field and value group to the valued field.
The first such theorem, proven by Delon (\cite{Del81}), states that a henselian valued field of
\label{sec:trans}
equicharacteristic $0$ is NIP (in $\Lval$) if and only if its residue field and its
value group are NIP (in $\Lring$ and $\Loag$ respectively). 
By a result of Gurevich and Schmitt (\cite{GS}), the $\Loag$-theory of any ordered abelian group
is NIP.
Following Delon, several further such `NIP transfer theorems' 
have been proven for henselian valued fields: in particular by B\'elair for unramified henselian valued fields with perfect residue field (\cite[Corollaire 7.5]{Bel99}) and by Jahnke and Simon
for separably
algebraically maximal Kaplansky valued fields of finite degree of imperfection (\cite[Lemma 3.2]{JS19}). 
In this section, we prove two more analogues of Delon's theorem.
The first case that we consider is that of separably
algebraically maximal Kaplansky valued fields of infinite degree of imperfection.
The second NIP transfer theorem we prove is for henselian finitely ramified valued fields with imperfect residue field.

In both cases, we employ the proof method from \cite{JS19} (which in turn
builds on \cite{CH14}), i.e., we
consider the following two properties of a theory $T$ of valued fields from 
\cite{JS19}:
\begin{itemize}
\item[\SE]
The residue field and the value group are stably embedded.
\item[\IM]
If $K\models T$ and $a$ is a singleton in an elementary extension $(K^{*},v^{*})\succeq(K,v)$ such that $K(a)/K$, together with the restriction of $v^{*}$, is an immediate extension,
then $\mathrm{tp}(a/K)$ is implied by instances of NIP formulas.
\end{itemize}
By \cite[Theorem 2.3]{JS19}, if the theory of $(K,v)$ satisfies both 
\SE~and \IM, and $Kv$ is NIP, then $(K,v)$ is NIP.

\subsection*{NIP transfer for separably algebraically maximal Kaplansky valued fields}

\begin{proposition}\label{prp:SAMK}
Let $(K,v)$ be a valued field of equal characteristic $p>0$ which is
separably algebraically maximal and Kaplansky.
Then
$(K,v)$ is NIP
if and only if $Kv$ is NIP.
\end{proposition}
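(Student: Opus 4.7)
The forward direction is immediate: $Kv$ is interpretable in the three-sorted structure $(K,v)$, and NIP is preserved under interpretation. So the content lies in the converse, and the plan is to reproduce the strategy of \cite{JS19}: I would apply \cite[Theorem 2.3]{JS19} to the theory $T=\mathrm{Th}(K,v)$, so that once properties (\SE) and (\IM) are verified, the NIP-ness of $Kv$ transfers to $(K,v)$.

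For (\SE), I expect to reuse the arguments of \cite{JS19}: the stable embeddedness of the residue field and the value group in a separably algebraically maximal Kaplansky valued field of positive characteristic is a consequence of an Ax--Kochen/Ershov style relative quantifier elimination and does not depend on the degree of imperfection of $K$. So this step should go through verbatim or with only minor modifications to what is already in \cite{JS19}.

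The main work is in (\IM). Since \cite[Lemma 3.2]{JS19} already establishes this for finite degree of imperfection, the only new content is to handle the infinite imperfection case. Take a singleton $a$ in an elementary extension $(K^{*},v^{*})\succeq(K,v)$ such that $K(a)/K$ is immediate, and split into two subcases. If $a$ is algebraic over $K$, then separable algebraic maximality forces $a$ to be purely inseparable, so $a^{p^{n}}\in K$ for some $n\geq 1$; combined with the Kaplansky hypotheses ($vK$ being $p$-divisible and $Kv$ perfect), $\mathrm{tp}(a/K)$ is pinned down by the algebraic (and therefore trivially NIP) formula $x^{p^{n}}=a^{p^{n}}$. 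If $a$ is transcendental over $K$, then $a$ is a pseudo-limit of a pseudo-Cauchy sequence $(c_{i})$ over $K$ of transcendental type, and $\mathrm{tp}(a/K)$ is implied by valuation inequalities of the form $v(x-c_{i})\geq\gamma_{i}$, each of which is NIP once (\SE) is in place and $Kv$ is NIP.

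The hard part will be the transcendental subcase when the imperfection degree of $K$ is infinite. The argument of \cite[Lemma 3.2]{JS19} uses a finite $p$-basis of $K$ to encode the residues of finer and finer approximations of $a$ as tuples in $Kv$, and this explicit encoding breaks down when the $p$-basis is infinite. My expectation is that this can be resolved by a localization: for any single element $a$, only countably many $p$-basis elements of $K$ can enter into $\mathrm{tp}(a/K)$, so one should be able to isolate a relevant definable (or ind-definable) family of residue-field-valued maps that captures the necessary data, apply the finite-imperfection argument of \cite{JS19} pointwise, and then re-assemble using the stable embeddedness of $Kv$ provided by (\SE). Making this reduction precise is, I expect, the sole technical obstacle; once it is in hand, the proposition follows by invoking \cite[Theorem 2.3]{JS19}.
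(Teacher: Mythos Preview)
Your overall strategy---verify (\SE) and (\IM) and invoke \cite[Theorem 2.3]{JS19}---is exactly the paper's, and the forward direction and the reduction to the infinite-imperfection case are also as in the paper. But your treatment of the transcendental subcase of (\IM) in infinite imperfection degree has a genuine gap, and the paper closes it by a different and much shorter route than the one you outline.

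You propose to adapt the finite-imperfection argument of \cite[Lemma~3.2]{JS19} via a ``localization'' to finitely many $p$-basis elements, but you do not carry this out and you explicitly flag it as the remaining obstacle. It is not clear that such a reduction works: what (\IM) requires is that the quantifier-free data of $a$ over $K$ determine, up to isomorphism over $K$, an \emph{elementary} extension of $K$ containing $a$, and your sketch does not produce such a model. The claim that the valuation inequalities $v(x-c_{i})\geq\gamma_{i}$ ``imply $\mathrm{tp}(a/K)$'' is precisely what needs justification; their NIP-ness is automatic (they are quantifier-free, hence NIP via embedding into a model of ACVF), but their completeness for the type is the whole point.

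The paper avoids any encoding argument. Given transcendental $a$ with $K(a)/K$ immediate inside $K^{*}$, it passes to a maximal algebraic purely wild extension $M$ of $K(a)$ inside $K^{*}$, and lets $L$ be the separable-algebraic part of $M$ over $K(a)$. Since $K(a)$ is Kaplansky, $M$ (and hence $L$) is unique up to isomorphism over $K(a)$ by \cite{KPR}; moreover $L$ is separably algebraically maximal, Kaplansky, immediate over $K$, and of infinite imperfection degree. Delon's completeness theorem \cite[Th\'eor\`eme~3.1]{Del82} then yields $K\preceq L$. Thus the quantifier-free type of $a$ over $K$ pins down, up to isomorphism over $K$, an elementary submodel of $K^{*}$ containing $a$, hence implies the full type, and (\IM) follows. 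The same theorem of Delon is also the substitute for Kuhlmann--Pal in the (\SE) argument; this is the only change needed from \cite[Lemma~3.1]{JS19}.

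A minor point on the algebraic subcase: since $K^{*}\succeq K$, the extension $K^{*}/K$ is regular, so any $a\in K^{*}$ algebraic over $K$ already lies in $K$. There is no need to argue via pure inseparability.
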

\begin{proof}
Clearly, if $(K,v)$ is NIP then $Kv$ is NIP.
We now show that the theory of any separably maximal Kaplansky field of
positive characteristic satisfies both properties \SE~and \IM.

If $K$ has finite degree of imperfection, $(K,v)$ satisfies \SE~by 
\cite[Lemma 3.1]{JS19}.
For the case of infinite degree of imperfection, note that
by \cite[Th\'{e}or\`{e}me 3.1]{Del82},
the theory of separably algebraically maximal Kaplansky valued fields of characteristic $p$ and given imperfection degree (allowed to be infinite) with value group elementarily equivalent to $vK$ and residue field elementarily equivalent to $Kv$ is complete.
Exactly as explained in the proof of \cite[Lemma 3.1]{JS19},
the theory of $(K,v)$ satisfies \SE~also in case the degree of imperfection 
is infinite; only citing Delon
(\cite[Th\'{e}or\`{e}me 3.1]{Del82})
rather than Kuhlmann and Pal 
(\cite[Theorem 5.1]{KuhlmannPal}).

In \cite[Lemma 3.2]{JS19}, \IM~is proved in the case that the imperfection degree of $K$ is finite.
Accordingly, we suppose that $K$ has infinite degree of imperfection.
Let $K(a)/K$ be an immediate extension, taken within an elementary extension $(K^{*},v^{*})\succeq(K,v)$,
and from now on we equip all subfields of $K^{*}$ with the restriction of $v^{*}$.
Since $K(a)/K$ is immediate, $K(a)$ is also Kaplansky.
We will show that the type $\mathrm{tp}(a/K)$ is implied by quantifier-free formulas.
If $a$ is algebraic over $K$ then already $a$ is a member of $K$, and the rest of the argument is trivial.
Otherwise, we suppose that $a$ is transcendental over $K$.
Let $M$ be a maximal algebraic purely wild extension of $K(a)$, taken within $K^{*}$.
Since $K(a)$ is Kaplansky, $M$ is uniquely determined up to isomorphism over $K(a)$,
by \cite[Theorem 5.1]{KPR}.
Now let $L/K(a)$ denote the subextension of $M/K(a)$ consisting of those elements of $M$ that are separably algebraic over $K(a)$.
Then $L/K$ is again immediate, and $L$ is a maximal separably algebraic purely wild extension of $K(a)$, which determines $L$ uniquely up to isomorphism over $K(a)$.
Moreover, $L$ is separably algebraically maximal and Kaplansky.
Since $L/K$ is separable, the imperfection degree of $L$ is also infinite.
By \cite[Th\'{e}or\`{e}me 3.1]{Del82},
we have $K\preceq L$ as valued fields.
This shows that the quantifier-free type of $a$ over $K$ determines the isomorphism type of an elementary submodel of $K^{*}$ which contains $a$, thus the quantifier-free type of $a$ over $K$ implies the full type of $a$ over $K$.
Recall that any quantifier-free $\Lval$-formula has NIP since 
every non-trivially valued field embeds into an 
algebraically closed non-trivially valued field and the $\Lval$-theory ACVF has NIP,
see \cite[Theorem A.11]{Simon}. Thus,
$(K,v)$ has the property \IM.
Therefore, if $Kv$ is NIP in the language of rings, $(K,v)$ is NIP in the
language of valued fields, by {\cite[Theorem 2.3]{JS19}}.
\end{proof}

As a special case of the previous Proposition, we get that the theory of any
separably closed valued field is NIP. This was previously shown by Delon 
(although her proof remains unpublished), and -- in the case of finite
degree of imperfection -- by Hong (\cite[Corollary 5.2.13]{Hon13}).
\begin{corollary}
The complete theory of any separably closed valued field is NIP. \label{cor:SCVF}
\end{corollary}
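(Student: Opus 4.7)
The plan is to reduce the corollary to Proposition~\ref{prp:SAMK}, handling equicharacteristic zero by a direct appeal to the known NIP-ness of algebraically closed valued fields. Let $(K,v)$ be a separably closed valued field.

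If $\mathrm{char}(K)=0$ then $K$ is algebraically closed and $(K,v)$ is a model of (a completion of) ACVF, which is NIP by the fact already quoted in the proof of Proposition~\ref{prp:SAMK}. So I may henceforth assume $\mathrm{char}(K)=p>0$, and I aim to verify that $(K,v)$ is separably algebraically maximal Kaplansky with algebraically closed residue field; Proposition~\ref{prp:SAMK} then concludes the argument.

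Since $K^{h}/K$ is separable algebraic, $K=K^{h}$, so $(K,v)$ is henselian; and $(K,v)$ is vacuously separably algebraically maximal since $K$ admits no proper separable algebraic extensions at all. For the Kaplansky conditions, I would verify each in turn. \emph{The value group is $p$-divisible:} for $a\in K$ with $v(a)>0$, the Artin--Schreier polynomial $X^{p}-X-a^{-1}\in K[X]$ is separable and therefore splits in $K$; any root $b$ satisfies $v(b)=-v(a)/p$ by a standard valuation calculation, so $v(a)/p\in vK$. \emph{The residue field is perfect:} given $a\in\mathcal{O}_{v}$ and any $\pi\in K$ with $v(\pi)>0$, the polynomial $X^{p}+\pi X-a$ has derivative $\pi\neq 0$ and so is separable; a root $b\in K$ is forced to lie in $\mathcal{O}_{v}$ (otherwise $v(b^{p}+\pi b)=pv(b)<0\leq v(a)$), whence $\bar{b}^{p}=\bar{a}$ in $Kv$. \emph{The residue field has no proper separable algebraic extensions:} a monic separable polynomial $\bar{f}\in Kv[X]$ lifts to a monic separable $f\in\mathcal{O}_{v}[X]$, whose roots in $K$ (existing since $K$ is separably closed) are integral over $\mathcal{O}_{v}$ hence in $\mathcal{O}_{v}$, and reduce to roots of $\bar{f}$. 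Combined with perfection, this makes $Kv$ algebraically closed.

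Proposition~\ref{prp:SAMK} now yields that $(K,v)$ is NIP if and only if $Kv$ is, and the latter is immediate since algebraically closed fields are stable and hence NIP in $\Lring$. The main subtlety is the perfection step: the naive polynomial $X^{p}-a$ is inseparable, which is exactly why one cannot immediately exploit separable closedness of $K$; the linear perturbation $+\pi X$ restores separability while keeping the new root in $\mathcal{O}_{v}$ and congruent to a $p$-th root of $a$ modulo $\mathfrak{m}_{v}$.
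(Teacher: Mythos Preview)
Your argument is correct and follows essentially the same route as the paper: show that $(K,v)$ is separably algebraically maximal and Kaplansky with NIP residue field, then invoke Proposition~\ref{prp:SAMK}. You are simply more explicit than the paper---handling characteristic zero separately (necessary, since Proposition~\ref{prp:SAMK} is stated only for positive characteristic), spelling out the Kaplansky conditions via concrete polynomial tricks, and deducing that $Kv$ is actually algebraically closed rather than citing Wood's theorem on the stability of separably closed fields.
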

\begin{proof}
Let $(K,v)$ be any separably closed valued field. Then $(K,v)$ is separably 
algebraically maximal and Kaplansky, since $K$ has no non-trivial 
separable field extensions. As $Kv$ is separably closed, it is
stable by \cite{Wood}. 
In particular, $Kv$ is NIP and so $(K,v)$ is NIP by Proposition
\ref{prp:SAMK}.
\end{proof}

\subsection*{NIP transfer for henselian finitely ramified valued fields}
Recall that we call a valued field $(K,v)$ of mixed characteristic $(0,p)$ unramified if $v(p)$ is the minimum positive element of the value group, and finitely
ramified if the interval $(0,v(p)] \subseteq vK$ is finite.
In particular, any henselian valued field of mixed characteristic with
value group $\mathbb{Z}$ is finitely ramified, and so is any power series field
over such a field (valued by the composition of the power series valuations and the
mixed characteristic valuation with value group $\mathbb{Z}$).\footnote{In fact, every henselian finitely ramified valued field is elementarily
equivalent to such a composition, as we argue in the proof of Proposition \ref{prp:elem}.}

For unramified henselian valued fields $(K,v)$ with perfect residue field
$Kv$, B\'elair proved that the valued field is NIP in case its residue
field is NIP in \cite[Th\'eor\`eme 7.4(2)]{Bel99}. Although he does not state
the assumption explicitly that the residue field need be perfect, his proof
relies crucially on properties of Witt rings which only hold over perfect fields.
We use Cohen rings (see section 2) instead.

We first show a NIP transfer result for henselian unramified valued fields
(allowing the residue field to be imperfect). In order to do this, 
we verify once more that the conditions \SE~and \IM~hold.
The fact that the residue field and value group are stably embedded as a pure field  or, respectively, a pure ordered abelian group, in
a henselian unramified valued field was shown in 
\cite[Theorem 1.4]{AJ2},
so we only have to 
show \IM.


\begin{lemma}\label{lem:IM}
Let $(K,v)$ be a henselian unramified valued field.
Then the $\Lval$-theory of $(K,v)$ satisfies \IM.
\end{lemma}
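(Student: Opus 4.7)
The plan is to follow closely the template of Proposition \ref{prp:SAMK}: I will show that the quantifier-free $\Lval$-type of $a$ over $K$ already implies the full $\Lval$-type of $a$ over $K$, and then invoke (as in Proposition \ref{prp:SAMK}) that every quantifier-free $\Lval$-formula has NIP via an embedding into a model of ACVF. From this, \IM~follows.

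Fix an immediate extension $K(a)/K$ inside $(K^{*},v^{*})\succeq (K,v)$. If $a$ is algebraic over $K$, its minimal polynomial is determined by the quantifier-free type of $a$ over $K$; combined with the henselianity of $K$, this determines $K(a)/K$ up to a unique isomorphism of valued $K$-algebras, and hence, using the Ax--Kochen/Ershov-type completeness for henselian unramified valued fields with prescribed residue field and value group provided by \cite{AJ2}, the full $\Lval$-type of $a$ over $K$. I may therefore assume that $a$ is transcendental over $K$. The henselization $K(a)^{h}$ of $K(a)$ embeds uniquely into $K^{*}$, and since $K(a)^{h}/K(a)$ and $K(a)/K$ are both immediate, so is $K(a)^{h}/K$; in particular, $K(a)^{h}$ is again a henselian unramified valued field.

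The key step is to choose a valued subfield $L\subseteq K^{*}$ containing $K(a)^{h}$ which is henselian, unramified, and maximal among immediate extensions of $K(a)^{h}$, and whose isomorphism type over $K(a)^{h}$ is determined by the quantifier-free data. For perfect residue fields this is classical Witt-vector theory; for imperfect residue fields, the requisite uniqueness is supplied by the model-theoretic analysis of Cohen rings in the authors' companion paper \cite{AJ2}, which also yields the Ax--Kochen/Ershov principle $K\preceq L$ (because $L/K$ is henselian, unramified, and immediate). Consequently, the quantifier-free $\Lval$-type of $a$ over $K$ determines $K(a)$ as a valued $K$-algebra, hence $K(a)^{h}$ (by uniqueness of the henselization), hence $L$, and therefore -- since $L\succeq K$ contains $a$ -- the full $\Lval$-type of $a$ over $K$. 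The main obstacle is precisely this uniqueness of the maximal immediate henselian unramified extension in the imperfect-residue-field case, which is exactly where the Cohen-ring model theory developed in \cite{AJ2} is indispensable; with it in hand, the Kaplansky template of Proposition \ref{prp:SAMK} transfers verbatim to the henselian unramified setting.
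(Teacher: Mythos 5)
Your general strategy (reduce to showing that the quantifier-free $\Lval$-type of $a$ over the model implies the full type, then invoke NIP of quantifier-free $\Lval$-formulas via ACVF) agrees with the paper's. But the central step of your argument has a genuine gap. You propose to embed $K(a)^{h}$ into a henselian, unramified, \emph{maximal} immediate extension $L\subseteq K^{*}$ whose isomorphism type over $K(a)^{h}$ is ``determined by the quantifier-free data,'' and you assert that the required uniqueness ``is supplied by the model-theoretic analysis of Cohen rings in \cite{AJ2}.'' This is precisely the point that fails. Uniqueness of maximal immediate extensions is a Kaplansky phenomenon (cf.~\cite[Theorem 5.1]{KPR}), which is what makes the template of Proposition \ref{prp:SAMK} work in that setting; a henselian unramified mixed-characteristic field with imperfect residue field is \emph{not} Kaplansky, and its maximal immediate extensions are in general not unique. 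What \cite{AJ2} supplies is the uniqueness of Cohen fields (complete, rank~$1$, unramified, with a prescribed residue field), a relative quantifier elimination, an AKE principle for elementary embeddings, and stable embeddedness of the residue field -- none of these gives uniqueness of maximal immediate extensions of an arbitrary higher-rank unramified henselian field. So the Kaplansky template does \emph{not} transfer ``verbatim.''

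The paper's proof avoids this obstacle entirely, and the mechanism is worth noting: after passing to an $\aleph_{1}$-saturated model $(L,w)$, one decomposes $w$ using the coarsening $w_{0}$ whose value group is $wL/\mathbb{Z}$. Saturation forces the rank-$1$ part $(Lw_{0},\bar{w})$ to be spherically complete, hence it has \emph{no} proper immediate extensions at all. Consequently, for any immediate $M/L$ inside $L^{*}$ the induced extension $(M\omega_{0},\bar{\omega})/(Lw_{0},\bar{w})$ is trivial, so $(L,w)$ is an $\mathcal{L}_{\ac,S}$-substructure of the henselization $(M^{h},\omega^{h})$ with trivial (hence elementary) extensions of value group and residue field, and \cite[Corollary 12.5]{AJ2} gives $(L,w)\preceq(M^{h},\omega^{h})$ directly. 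There is no need for a maximal immediate extension, nor for any uniqueness thereof. You should replace your ``choose a maximal immediate extension $L$'' step by this saturation-plus-standard-decomposition argument, or else justify -- rather than assert -- the uniqueness you invoke.
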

\begin{proof}
Let $(K,v)$ be a henselian unramified valued field,
and let $(L,w)$ be elementarily equivalent to $(K,v)$.
Take any $a\in(L^{*},w^{*})\succeq(L,w)$ such that $(M,\omega):=(L(a),w^{*}|_{L(a)})$ is an immediate extension of $(L,w)$.
We show that the type of $a$ over $L$ is determined by the quantifier-free type of $a$ over $L$.
The isomorphism type of $a$ over $L$ determines the henselization $(M^{h},\omega^{h})$ of $(M,\omega)$ up to isomorphism over $(L,w)$.
Since the extensions of the value group and residue field are trivial, they are elementary.
Thus,
by \cite[Theorem 4.3.4]{ErshovMult}, $(L,w)\preceq(M^{h},\omega^{h})$.
This shows that the quantifier-free type of $a$ over $L$ completely determines a model containing $L(a)$, and thus determines the complete type of $a$ over $L$.
\end{proof}

Applying the two lemmas above, we now get a NIP transfer principle for henselian unramified valued fields.

\begin{proposition}\label{prp:unramified_transfer}
Let $k$ be a NIP field of characteristic $p>0$.
Then any henselian unramified valued field $(K,v)$, with residue field $k$, is NIP
in the language $\Lval$ of valued fields.
\end{proposition}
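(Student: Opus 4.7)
The plan is to apply the transfer principle \cite[Theorem 2.3]{JS19}, which was the main engine already used in Proposition \ref{prp:SAMK}: to show that a henselian valued field whose residue field is NIP is itself NIP, it suffices to verify that its $\Lval$-theory satisfies both \SE{} and \IM. So the task reduces to checking those two properties for henselian unramified valued fields, because then, since by hypothesis $Kv = k$ is NIP in $\Lring$, the transfer theorem delivers NIP of $(K,v)$ in $\Lval$ directly.

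For \SE{}, the work is already essentially done in the preceding two lemmas. The residue field is stably embedded as a pure field by \cite[Corollary 12.7]{AJ2}, and the value group is purely stably embedded as an ordered abelian group by Lemma \ref{lem:value_group}. Both facts together give exactly the content of \SE{} as formulated earlier in the section. There is nothing further to add here beyond quoting these statements.

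For \IM, the substance is provided by Lemma \ref{lem:IM}: given $(K,v) \models T$ and a singleton $a$ in an elementary extension $(K^{*},v^{*})\succeq(K,v)$ such that $K(a)/K$ is immediate, that lemma shows that $\mathrm{tp}(a/K)$ is determined by the quantifier-free type of $a$ over $K$. Since quantifier-free $\Lval$-formulas are NIP (via an embedding into a model of ACVF, cf.~\cite[Theorem A.11]{Simon}, as was used already in the proof of Proposition \ref{prp:SAMK}), this is precisely the statement that $\mathrm{tp}(a/K)$ is implied by instances of NIP formulas, which is \IM.

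The main obstacle is already absorbed into Lemmas \ref{lem:value_group} and \ref{lem:IM}, both of which rely on the relative quantifier elimination for henselian unramified valued fields from \cite{AJ2}. Once these are in hand, the proof itself is just a one-line citation of \cite[Theorem 2.3]{JS19}: the NIP of $k = Kv$, together with \SE{} and \IM, implies the NIP of $(K,v)$ in $\Lval$.
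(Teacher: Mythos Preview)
Your proposal is correct and follows exactly the paper's own proof: verify \SE{} via \cite[Corollary 12.7]{AJ2} and Lemma \ref{lem:value_group}, verify \IM{} via Lemma \ref{lem:IM}, then apply \cite[Theorem 2.3]{JS19}. The only difference is that you spell out the ``quantifier-free formulas are NIP via ACVF'' step explicitly, whereas the paper leaves this implicit (having already made the point in the proof of Proposition \ref{prp:SAMK}).
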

\begin{proof}
Let $(K,v)$ be a henselian unramified valued field with NIP residue field.
We verify that the conditions \SE~and \IM~hold for the theory of $(K,v)$.
First, \SE~holds by \cite[Theorem 1.4]{AJ2}.
By Lemma \ref{lem:IM}, property \IM~also holds.
Hence, applying \cite[Theorem 2.3]{JS19}, $(K,v)$ is NIP.
\end{proof}

We deduce a version of NIP transfer for mixed characteristic valued fields $(K,v)$ in the case that $(K,v_{p})$ is finitely ramified: Here, $\SE$ cannot be deduced by showing that 
any automorphism of the residue field (resp.~value group) lifts to an automorphism 
of the valued field, as in finitely ramified fields the existence of such a lift may fail 
(see \cite[Example 11.5]{AJ2}).\footnote{That $\SE$ holds for finitely
ramified fields has since been shown in \cite[Theorem 6.2 and Remark 6.3]{FPS}.}
Thus, we take a different route.

Before we start, we need some further details about finitely ramified fields.
The key idea is that - up to elementary equivalence - every henselian finitely ramified
valued field with value group elementarily equivalent to $\mathbb{Z}$ is in fact a finite extension of a henselian unramified valued field over the same residue field.
This will be made precise in Lemma \ref{lem:finite_ramification}.

\begin{fact}[{\cite[Theorem 22.7]{War93}}]\label{fact:Warner}
Let $(K,v)$ be a complete valued field of mixed characteristic with
value group $vK\cong\mathbb{Z}$ and ramification $e>0$, that is, the interval $(0,v(p)] \subseteq vK$ contains $e$ many elements.
Then $(K,v)$ is an extension of degree $e$ of a complete unramified valued field $(C(Kv),w)$
which has residue field $Kv$ and value group $\mathbb{Z}$.
The latter is a called Cohen subfield of $(K,v)$.
\end{fact}

We can now use Cohen subfields to tackle NIP transfer in finitely ramified
henselian valued fields:
\begin{proposition}\label{prp:fin_ram}
Let $(K,v)$ be a henselian valued field of mixed characteristic $(0,p)$
such that $(K,v_{p})$ is finitely ramified
and $(Kv_{p},\bar{v})$ is NIP.
Then $(K,v)$ is NIP.
\end{proposition}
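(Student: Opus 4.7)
The plan is to reduce the proposition to the henselian unramified case treated in Proposition \ref{prp:unramified_transfer}, by passing to an unramified subfield of $(K,v_{p})$ and then transferring NIP across a finite field extension. Consider the standard decomposition $v=\bar{v}\circ v_{p}$. The residue field $Kv_{p}$ is NIP as a pure field, since it is interpretable in the NIP valued field $(Kv_{p},\bar{v})$.

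Using the model theory of henselian finitely ramified valued fields developed in \cite{AJ2}, I would choose a subfield $K_{0}\subseteq K$ such that $(K_{0},v_{p}|_{K_{0}})$ is henselian unramified with residue field $K_{0}v_{p}=Kv_{p}$ and $[K:K_{0}]$ equal to the ramification index of $(K,v_{p})$. Concretely, $K_{0}$ would arise as the fraction field of a Cohen subring of $\mathcal{O}_{v_{p}}$ lifting $Kv_{p}$. The restriction $v|_{K_{0}}$ then decomposes as $\bar{v}\circ v_{p}|_{K_{0}}$, and is henselian since $v$ is. Proposition \ref{prp:unramified_transfer} applied to $(K_{0},v_{p}|_{K_{0}})$ together with the NIP residue field $Kv_{p}$ shows that $(K_{0},v_{p}|_{K_{0}})$ is NIP, and by \cite[Corollary 12.7]{AJ2} the residue field is stably embedded as a pure field in $(K_{0},v_{p}|_{K_{0}})$. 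Combining these with the assumption that $(Kv_{p},\bar{v})$ is NIP, Proposition \ref{prop:JS} applied to the coarsening $v_{p}|_{K_{0}}$ of $v|_{K_{0}}$ will yield that $(K_{0},v|_{K_{0}})$ is NIP.

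Finally, since $K/K_{0}$ is finite and $v$ is henselian, $v$ is the unique prolongation of $v|_{K_{0}}$ to $K$. Hence the three-sorted structure $(K,v)$ is uniformly interpretable in $(K_{0},v|_{K_{0}})$: elements of $K$ are encoded by coordinate tuples in $K_{0}$ with respect to a fixed basis, the ring operations are polynomial in these coordinates, and the value group $vK$ together with the valuation map is definable from the minimal polynomial of a primitive element of $K/K_{0}$, analogously to the definable-family argument used in the proof of Lemma \ref{lem:defect_ultrapower}. Since NIP is preserved under interpretability, we conclude that $(K,v)$ is NIP. I expect the main obstacle to lie in the first step, namely isolating the unramified subfield $K_{0}$ in the generality required by the statement and in particular allowing $Kv_{p}$ to be imperfect; this is precisely where the detailed structure theory of henselian finitely ramified valued fields from \cite{AJ2} is needed, compensating for the failure noted above of the residue field of $v_{p}$ to be stably embedded in $(K,v_{p})$ itself.
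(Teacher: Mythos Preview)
Your overall strategy---reduce to an unramified subfield, apply Proposition \ref{prp:unramified_transfer}, use stable embeddedness and Proposition \ref{prop:JS}, then transfer NIP across a finite extension---is exactly the paper's. The gap is in how you locate the unramified subfield.

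You propose to take $K_{0}$ as the fraction field of a Cohen subring of $\mathcal{O}_{v_{p}}$. A Cohen ring is a \emph{complete} discrete valuation ring, so such a $K_{0}$ satisfies $v_{p}K_{0}\cong\mathbb{Z}$. For $[K:K_{0}]$ to be finite you would then need $v_{p}K$ to be a finite-index extension of $\mathbb{Z}$, i.e.\ of rank $1$. But ``finitely ramified'' only says that the interval $[0,v_{p}(p)]$ is finite; the value group $v_{p}K=vK/\rmDelta_{p}$ can have arbitrary rank. For instance, take $K=\mathbb{Q}_{p}(\sqrt{p})((t))$ with $v$ the composition of the $t$-adic and $p$-adic valuations: here $v=v_{p}$, $v_{p}K\cong\mathbb{Z}\times\tfrac{1}{2}\mathbb{Z}$ lexicographically, and no Cohen subring has fraction field of finite index in $K$. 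Even when $v_{p}K$ does have rank $1$, the existence of a Cohen subring with the same residue field requires completeness, which you have not arranged.

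The paper repairs both points by inserting two steps you omit. First it passes to an $\aleph_{1}$-saturated elementary extension. Second, it uses the \emph{full} standard decomposition through $v_{0}$: the rank-$1$ mixed-characteristic piece $(Kv_{0},\bar{v}_{p})$ is then complete (by saturation) with value group $\mathbb{Z}$, so Warner's theorem \cite[22.7]{War93} produces the desired unramified subfield $L\subseteq Kv_{0}$ with $[Kv_{0}:L]$ finite. Your argument from Proposition \ref{prp:unramified_transfer} through Proposition \ref{prop:JS} and interpretation across the finite extension then runs inside $Kv_{0}$, giving that $(Kv_{0},\bar{v})$ is NIP. The remaining equicharacteristic-zero layer $(K,v_{0})$ is handled separately by Delon's theorem and the stable embeddedness of $Kv_{0}$, followed by one more application of Proposition \ref{prop:JS}. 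In short, the unramified subfield lives in $Kv_{0}$, not in $K$.
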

\begin{proof}
Let $(K^{*},v^{*})\succeq(K,v)$ be an $\aleph_{1}$-saturated elementary extension.
Since $v_{p}$ is $\Lring$-definable in $K$, the corresponding valuation $v_{p}^{*}$ on $K^{*}$ is also finitely ramified, and $(K^{*}v_{p}^{*},\bar{v}^{*})$ is NIP.
Also if $(K^{*},v^{*})$ is NIP then $(K,v)$ is NIP.  
Therefore, without loss of generality, we may suppose from now on that $(K,v)$ is $\aleph_{1}$-saturated.

Consider the standard decomposition of $(K,v)$:
\Kstandarddecomposition
The rank-$1$ valued field $(Kv_{0},\bar{v}_{p})$ is complete and finitely ramified.
By Fact \ref{fact:Warner},
there is a subfield $L$ of $Kv_{0}$ with $Kv_{0}/L$ finite, such that the restriction $w$ of $\bar{v}_{p}$ to $L$ is unramified and complete, and $Lw=Kv_{p}$.
Hence $(L,w)$ is NIP, by Proposition \ref{prp:unramified_transfer}.
Moreover, the residue field $Lw$ is stably embedded in $(L,w)$, by \cite[Theorem 1.4]{AJ2}.
Thus by Proposition \ref{prop:JS}, $(L,w')$ is NIP, where
the composition $w':=\bar{v}\circ w$ is a valuation on $L$ with residue field $Lw'=Kv$.
In fact $(Kv_{0},\bar{v})/(L,w')$ is a finite extension of valued fields,
and the valuation ring of $\bar{v}$ is the integral closure in $Kv_{0}$ of the valuation ring of $w'$, since $w'$ is henselian.
Thus $(Kv_{0},\bar{v})$ is interpretable in $(L,w')$,
and it follows that $(Kv_{0},\bar{v})$ is NIP.
Furthermore, $(K,v_{0})$ is NIP, since it is an equicharacteristic zero henselian valued field with NIP residue field,
\cite[Theorem A.15]{Simon}.
Moreover, the residue field $Kv_{0}$ is stably embedded in $(K,v_{0})$, by \cite[Corollary 5.25]{vdD}.
Applying Proposition \ref{prop:JS} again,
we conclude that $(K,v)$ is NIP.
\end{proof}

As a special case, we immediately get the following corollary.

\begin{corollary}\label{cor:fin_ram_2}
Let $(K,v)$ be a henselian finitely ramified valued field of mixed characteristic with $Kv$ NIP.
Then $(K,v)$ is NIP in the language $\Lval$ of valued fields.
\end{corollary}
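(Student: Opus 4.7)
The plan is to reduce this corollary directly to Proposition \ref{prp:fin_ram} by showing that the hypothesis of finite ramification on $(K,v)$ forces the coarsening $v_{p}$ to coincide with $v$ itself.

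First I would verify that $\rmDelta_{p}=\{0\}$. If $\rmDelta_{p}$ contained some positive element $\gamma$, then every positive integer multiple $n\gamma$ would also lie in $\rmDelta_{p}$ by subgroup closure, and each such $n\gamma$ would satisfy $n\gamma<v(p)$: indeed, if some $n\gamma\geq v(p)$, then convexity of $\rmDelta_{p}$ together with $0, n\gamma\in\rmDelta_{p}$ would place $v(p)$ inside $\rmDelta_{p}$, contradicting the definition of $\rmDelta_{p}$. This would yield infinitely many distinct elements of $[0,v(p)]\subseteq vK$, contradicting the finite ramification hypothesis. Hence $\rmDelta_{p}=\{0\}$ and so $v_{p}=v$.

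With $v_{p}=v$ in hand, I would then read off the hypotheses of Proposition \ref{prp:fin_ram}: $(K,v)$ is henselian of mixed characteristic $(0,p)$; the valued field $(K,v_{p})=(K,v)$ is finitely ramified; and $(Kv_{p},\bar{v})=(Kv,\mathrm{triv})$ is NIP, where $\mathrm{triv}$ denotes the trivial valuation induced by $v$ on its own residue field. The last point uses that a trivially valued field is NIP as an $\Lval$-structure if and only if its underlying field is NIP in $\Lring$, since the value-group sort collapses to a single point and the residue-field sort is canonically identified with the field itself, so the two structures are bi-interpretable and NIP is preserved under bi-interpretability. Applying Proposition \ref{prp:fin_ram} then yields that $(K,v)$ is NIP in $\Lval$.

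I do not foresee any substantive obstacle; the argument is essentially a definitional unwinding followed by a direct appeal to the preceding proposition. The only step that deserves explicit mention is the convexity argument showing $\rmDelta_{p}=\{0\}$, which is precisely the reason that the finite ramification hypothesis on $v$ transfers to the finite ramification hypothesis on $v_{p}$ required by Proposition \ref{prp:fin_ram}.
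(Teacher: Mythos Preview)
Your proposal is correct and matches the paper's intended argument: the paper states that the corollary follows immediately from Proposition~\ref{prp:fin_ram}, and your unwinding---showing that finite ramification forces $\rmDelta_{p}=\{0\}$ so that $v_{p}=v$ and $(Kv_{p},\bar{v})$ is trivially valued with NIP residue field---is precisely the content of that immediacy.
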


\section{The main theorem and immediate consequences}

We are now in a position to prove our main theorem. \label{sec:mt}

\begin{theorem}\label{thm:1}
Let $(K,v)$ be a henselian valued field.
Then $(K,v)$ is NIP if and only if both of the following hold:

\MAINSTATEMENT
\end{theorem}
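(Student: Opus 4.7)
The forward implication $(\Rightarrow)$ is Theorem \ref{thm:2}, so it remains to prove the converse: assuming $(K,v)$ is henselian and satisfies conditions (1), (2), (3), we show $(K,v)$ is NIP. If $v$ is trivial then $(K,v)$ and $Kv$ are bi-interpretable, so NIP follows from (1); assume $v$ non-trivial and split by the three clauses of (2).

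In case \maincasea, if $(K,v)$ has equal characteristic $0$ then Delon's theorem (the fact quoted in the introduction) together with (1) gives $(K,v)$ NIP. If the equal characteristic is $p$, henselianity plus separable defectlessness yields separable algebraic maximality (as noted after Definition \ref{def:SAM}), so $(K,v)$ is SAM Kaplansky of equal characteristic $p$, and Proposition \ref{prp:SAMK} with (1) concludes. Case \maincaseb reduces to case \maincasea followed by Proposition \ref{prp:fin_ram}: the residue valued field $(Kv_p,\bar{v})$ is a henselian equicharacteristic-$p$ field which is either trivial or separably defectless Kaplansky, so the argument for case \maincasea shows it is NIP using (1), and the finite ramification of $v_p$ then allows Proposition \ref{prp:fin_ram} to deliver $(K,v)$ NIP.

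Case \maincasec is the genuinely delicate one, and where I expect the main obstacle. My plan is to treat $v$ as the composition $\bar{v}\circ v_0$, show that both factors are NIP, and glue them via Proposition \ref{prop:JS}. For the residue factor $(Kv_0,\bar{v})$, which by hypothesis is henselian defectless Kaplansky of mixed characteristic, henselian defectlessness implies algebraic maximality, so $(Kv_0,\bar{v})$ is SAM Kaplansky, and since $Kv_0$ has characteristic $0$ its imperfection degree is $0$, hence finite. The finite-imperfection machinery of Jahnke--Simon therefore applies: \cite[Lemma 3.1]{JS19} and \cite[Lemma 3.2]{JS19} yield properties \SE~and \IM~for $(Kv_0,\bar{v})$, whereupon \cite[Theorem 2.3]{JS19} combined with (1) gives $(Kv_0,\bar{v})$ NIP. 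In particular $Kv_0$ is NIP, so Delon's theorem applied to the henselian equicharacteristic-$0$ valued field $(K,v_0)$ yields $(K,v_0)$ NIP. The residue field $Kv_0$ is stably embedded as a pure field in $(K,v_0)$ by the classical result \cite[Corollary 5.25]{vdD}, so Proposition \ref{prop:JS} concludes $(K,v)$ NIP.

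Condition (3) should play no active role in the implication: a case-by-case inspection shows that (2) combined with $Kv$ finite already forces $(K,v)$ to be trivial or finitely ramified, since in each clause of (2) the Kaplansky hypothesis precludes a finite residue field at the deepest layer unless the corresponding subvaluation is trivial. The hard part is case \maincasec, because neither Proposition \ref{prp:SAMK} (stated only in equal characteristic $p$) nor Proposition \ref{prp:fin_ram} and Corollary \ref{cor:fin_ram_2} (which require finite ramification of $v_p$) apply directly to the mixed-characteristic SAM Kaplansky field $(Kv_0,\bar{v})$; one genuinely has to appeal to the Jahnke--Simon framework, exploiting that $Kv_0$ is perfect so that the finite-imperfection hypotheses of \cite[Lemmas 3.1, 3.2]{JS19} are met.
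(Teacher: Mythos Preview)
Your argument is correct and follows the same architecture as the paper's proof: Theorem~\ref{thm:2} for the forward direction, Delon in equicharacteristic~$0$, Proposition~\ref{prp:SAMK} in equicharacteristic~$p$, Proposition~\ref{prp:fin_ram} for case~{\maincaseb}, and in case~{\maincasec} the decomposition $v=\bar v\circ v_0$ glued via Proposition~\ref{prop:JS} after invoking \cite[Corollary~5.25]{vdD} for stable embeddedness. The only cosmetic difference is that in case~{\maincasec} the paper cites \cite[Theorem~3.3]{JS19} directly for the NIP transfer to the algebraically maximal Kaplansky field $(Kv_0,\bar v)$, whereas you unpack that result into \cite[Lemmas~3.1,~3.2 and Theorem~2.3]{JS19}; this is the same content. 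Your remark that condition~(3) is redundant for the converse implication is also correct and is not made explicit in the paper.
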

\begin{proof}
The implication $\implies$ is a special case of Theorem \ref{thm:2}.
It remains to prove the converse,
thus from now on we assume that $Kv$ is NIP
and $(K,v)$ satisfies one of
{\maincasea},
{\maincaseb},
or
{\maincasec}.

In case {\maincasea},
if $v$ is trivial, we automatically get that $(K,v)$ is NIP.
Otherwise, $v$ is separably defectless Kaplansky and henselian.
Thus it is separably algebraically maximal and Kaplansky.
Now Proposition~\ref{prp:SAMK} implies that $(K,v)$ is NIP.

If $(K,v)$ satisfies {\maincaseb},
then $(Kv_{p},\bar{v})$ is either trivial (in particular $Kv_p=Kv$) and thus NIP,
or separably defectless Kaplansky.
In the latter case, $(Kv_p,\bar{v})$ is NIP by Proposition \ref{prp:SAMK}.
Applying Proposition \ref{prp:fin_ram}, we conclude that $(K,v)$ is NIP.

Finally, assume $(K,v)$ is in case {\maincasec}.
We show first that $(Kv_{0},\bar{v})$ is NIP.
Either $\bar{v}$ is trivial, or $(Kv_{0},\bar{v})$ is defectless Kaplansky and henselian, thus algebraically maximal.
In either case, $(Kv_{0},\bar{v})$ is NIP (\cite[Theorem 3.3]{JS19}). In particular,
$Kv_0$ is NIP.
Once more, $(K,v_{0})$ is NIP by Delon's Theorem (\cite[Theorem A.15]{Simon}). Furthermore, 
$Kv_{0}$ is stably embedded as a pure field, see \cite[Corollary 5.25]{vdD}.
Finally, applying Proposition \ref{prop:JS} once again,
we conclude that $(K,v)$ is NIP.
\end{proof}

Note that none of the cases appearing in the theorem
is vacuous:

\begin{example} \label{Ex:NIP}
\begin{enumerate}
\item[{\maincasea}]
    We give examples of NIP valued fields $(K,v)$ of equal characteristic where the valuation is trivial or separably defectless Kaplansky.
    Naturally the fields $\mathbb{C}$, $\mathbb{R}$, and $\mathbb{F}^{\mathrm{alg}}_{p}$,
    equipped with the trivial valuation,
    are suitable examples,
    as well as $\mathbb{C}((\rmGamma))$ and $\mathbb{R}((\rmGamma))$ with the power series valuation $v_{\rmGamma}$, for any ordered abelian group $\rmGamma$.
    Moreover, if $\rmGamma$ is $p$-divisible then $\mathbb{F}_{p}^{\mathrm{alg}}((\rmGamma))$ together with the power series valuation $v_{\rmGamma}$ is NIP.
    In particular, $\mathrm{ACVF}_{0,0}$, $\mathrm{SCVF}_{p}$, and $\mathrm{RCVF}$ are NIP.
\item[{\maincaseb}]
    We give examples of NIP valued fields $(K,v)$ of mixed characteristic $(0,p)$ such that $(K,v_{p})$ is finitely ramified and $(Kv_{p},\bar{v})$ is trivial or separably defectless Kaplansky.
    The most basic examples are $\mathbb{Q}_{p}$ with the $p$-adic valuation $v_{p}$, and any finite extension thereof.
    In all of these examples, $(Kv_{p},\bar{v})$ is trivial.
    To illustrate the case where $(Kv_{p},\bar{v})$ is separably defectless Kaplansky and non-trivial,
    we start with any separably closed non-trivially valued field $(k,u)$ of characteristic $p$.
    In particular, $(k,u)$ is NIP by  Proposition \ref{prp:SAMK}.
    Now, let $(K,w)$ be a Cohen field over $k$
    and consider the valuation $v$ on $K$ defined to be the composition $u\circ w$.
    Note that $v_{p}=w$ since $w$ is the coarsest mixed characteristic coarsening of $v$.
    Since $(k,u)=(Kv_p, \bar{v})$ is NIP, $(K,v)$ is NIP by Proposition \ref{prp:fin_ram}.
    If $k$ is imperfect, then $(Kv_p,\bar{v})$ is separably algebraically maximal but not algebraically
    maximal.
    Furthermore, any (generalized) power series field over any of the aforementioned examples, together with the composition of valuations,
    is again NIP.
\item[{\maincasec}]
    We give examples of NIP valued fields $(K,v)$ of mixed characteristic $(0,p)$ such that $(Kv_{0},\bar{v})$ is defectless Kaplansky.
    The most obvious examples are algebraically closed valued fields, i.e.~models of $\mathrm{ACVF}_{0,p}$.
    More generally, given any perfect infinite NIP field $k$ of characteristic $p$, e.g.~$\mathbb{F}_{p}^{\mathrm{alg}}$, we may construct suitable $(K,v)$ with residue field $k$, as follows.
    
    Let $\rmGamma$ be a non-trivial $p$-divisible ordered abelian group.
    By a standard construction, see for example 
    \cite[Theorem 2.14]{Kuh04},
    there is a valued field $(L,w)$ of mixed characteristic $(0,p)$ with value group $\rmGamma$ and residue field $k$.
    Now let $(K,v)$ be a maximal algebraic purely wild extension of $(L,w)$.
    Since $\rmGamma$ is $p$-divisible and $k$ admits no finite extensions of degree divisible by $p$,
    $(K,v)$ is the unique such extension up to isomorphism over $L$,
    by \cite[Theorem 5.1]{KPR}.
    Moreover, 
    $(K,v)/(L,w)$ is immediate, by \cite[Lemma 5.2]{KPR}.
    By construction, $(K,v)$ is defectless Kaplansky, and it is NIP by \cite[Theorem 3.3]{JS19}.
    As before, any (generalized) power series field over an example as just described, together with the composition of valuations, is NIP.
\end{enumerate}
\end{example}

As a consequence of Theorem \ref{thm:1}, we obtain an analogue to a result by Halevi and Hasson,
who proved that the henselization of every strongly dependent
valued field is strongly dependent (\cite[Theorem 2]{HH2}):

\begin{corollary}\label{cor:3}
If $(K,v)$ is NIP, then its henselization $(K^{h},v^{h})$ is NIP.
\end{corollary}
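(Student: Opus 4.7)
The plan is to combine Theorems \ref{thm:2} and \ref{thm:1}. Given an NIP valued field $(K,v)$, Theorem \ref{thm:2} shows that $(K,v)$ satisfies conditions \textbf{(1)}--\textbf{(3)} of the Main Theorem, even without assuming henselianity. It will then suffice to verify that these conditions transfer from $(K,v)$ to the henselization $(K^{h}, v^{h})$; an application of Theorem \ref{thm:1} to $(K^{h}, v^{h})$ will then yield the conclusion.

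Since $K^{h}/K$ is separable algebraic and immediate, we have $K^{h}v^{h}=Kv$, $v^{h}K^{h}=vK$, and $\mathrm{char}(K^{h})=\mathrm{char}(K)$. Thus condition \textbf{(1)} transfers immediately, and the case distinction in condition \textbf{(2)} (equal vs.~mixed characteristic, and the particular case \maincasea, \maincaseb, or \maincasec) is preserved. For the subcases involving coarsenings, Lemma \ref{lem:henselization}\II identifies the appropriate coarsening of $v^{h}$ prolonging $v_{p}$ or $v_{0}$ as the valuation whose residue field, equipped with the induced valuation, is the henselization of $(Kv_{p}, \bar{v})$ or $(Kv_{0}, \bar{v})$; this lets us propagate the transfer argument along the standard decomposition to each component of interest.

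The remaining task is to check that each algebraic property appearing in \textbf{(2)} and \textbf{(3)} is preserved under henselization. Kaplansky depends only on $vK$ and $Kv$; separable defectlessness is preserved by \cite[Theorem 18.2]{Endler}; defectlessness follows by an analogous standard argument, since every finite extension of $K^{h}$ arises, up to valued-field isomorphism, as the henselization of a finite extension of $K$, so the fundamental equalities over $K^{h}$ and $K$ correspond; and finite ramification depends only on the interval $[0,v(p)]\subseteq vK$. Condition \textbf{(3)} transfers for the same elementary reasons. Once all this is assembled, $(K^{h},v^{h})$ is henselian and satisfies \textbf{(1)}--\textbf{(3)}, so Theorem \ref{thm:1} gives that it is NIP. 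No significant obstacle is expected; the proof reduces to careful bookkeeping across the standard decomposition, and the only slightly delicate ingredient is the preservation of (separable) defectlessness under henselization, which is well-established in valuation theory.
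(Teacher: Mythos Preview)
Your proposal is correct and follows essentially the same approach as the paper: apply Theorem~\ref{thm:2} to obtain conditions \textbf{(1)}--\textbf{(3)} for $(K,v)$, transfer each algebraic property to $(K^{h},v^{h})$ using immediacy of the henselization and Lemma~\ref{lem:henselization}\II\ for the standard decomposition, then conclude via Theorem~\ref{thm:1}. The only minor difference is that the paper handles defectlessness by separating the purely inseparable and separable cases (using linear disjointness and \cite[Theorem 18.2]{Endler}), whereas you sketch a direct correspondence between finite extensions of $K$ and of $K^{h}$; both routes are standard and lead to the same conclusion.
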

\begin{proof}
Let $(\dagger)$ be one of the following properties of a valued field which all occur in the statement
of Theorem \ref{thm:1}:
\begin{enumerate}
\item trivially valued
\item of equal characteristic zero,
\item of equal characteristic $p$, for a given prime $p>0$,
\item of mixed characteristic $(0,p)$, for a given prime $p>0$,
\item separably defectless
\item defectless
\item Kaplansky
\item finitely ramified (for $(K,v)$ of mixed characteristic).
\end{enumerate}
\begin{claim}\label{claim:1}
If $(K,v)$ satisfies $(\dagger)$ then $(K^{h},v^{h})$ satisfies $(\dagger)$.
\end{claim}
\begin{claimproof}[Proof of claim]
If $v$ is trivial then it is already henselian.
The properties relating to characteristic, i.e.~{\bf(ii)--(iv)}, are preserved when taking any extension, so in particular when passing to the henselization.
For `separably defectless' we apply \cite[Theorem 18.2]{Endler}.
For `defectless' we argue as follows:

First we consider a finite purely inseparable extension $L/K$,
and denote by $w$ the unique prolongation of $v$ to $L$.
The compositum $LK^{h}$ -- equipped with the unique prolongation of $v^{h}$ -- coincides with the henselization $(L^{h},w^{h})$ of $(L,w)$.
Since $K^{h}/K$ is separably algebraic, $L/K$ is linearly disjoint from $K^{h}/K$, and so $[L^{h}:K^{h}]=[L:K]$.
Moreover any finite purely inseparable extension of $K^{h}$ arises in this way.
Since both $(L^{h},w^{h})/(L,w)$ and $(K^{h},v^{h})/(K,v)$ are immediate,
we have the equivalence
\begin{align*}
	[L:K] = e(w/v)\,f(w/v) &\Longleftrightarrow [L^{h}:K^{h}] = e(w^{h}/v^{h})\,f(w^{h}/v^{h}).
\end{align*}
Combining this with the claim for the property of separable defectlessness, it follows that if $(K,v)$ is defectless then $(K^{h},v^{h})$ is defectless.

Finally, the properties `Kaplansky' and `finitely ramified' each depend only on the value group and residue field.
Since $(K^{h},v^{h})/(K,v)$ is an immediate extension, it follows that if $(K,v)$ is Kaplansky (respectively, finitely ramified) then $(K^{h},v^{h})$ satisfies the same property.
\end{claimproof}
Let $(K,v)$ be NIP.
By Theorem \ref{thm:2}, $(K,v)$ is in one of the cases {\maincasea}, {\maincaseb}, or {\maincasec}.
By repeated application of the claim, we will now show that $(K^{h},v^{h})$ is in the same case as $(K,v)$.
First, we consider case {\maincasea}.
There are four properties of $(K,v)$ mentioned in {\maincasea}, and each of those properties is shown in the claim to transfer from $(K,v)$ to $(K^{h},v^{h})$.
Thus if $(K,v)$ satisfies {\maincasea}, then $(K^{h},v^{h})$ also satisfies {\maincasea}.

Also by the claim, if $(K,v)$ is of mixed characteristic $(0,p)$,
then the same is true of $(K^{h},v^{h})$.
We now consider the standard decomposition of $(K^{h},v^{h})$:
\Khstandarddecomposition
Note that $vK=v^{h}K^{h}$ and $Kv=K^{h}v^{h}$,
from which we have $\rmDelta_{0}=\rmDelta_{0}^{h}$ and $\rmDelta_{p}=\rmDelta_{p}^{h}$.
Nonetheless, $v_{0}^{h}$ does not denote the henselization of $v_{0}$, but the coarsening of $v^{h}$ corresponding to the convex subgroup $\rmDelta_{0}^{h}$;
and likewise for $v_{p}^{h}$.
The coarsening $(K,v_{p})$ is finitely ramified if and only if $\rmDelta_{0}/\rmDelta_{p}$ is discrete,
which is purely a property of the value group $vK$.
Since $\rmDelta_{0}/\rmDelta_{p}=\rmDelta_{0}^{h}/\rmDelta_{p}^{h}$,
we have that
$(K,v_{p})$ is finitely ramified if and only if $(K^{h},v^{h}_{p})$ is also finitely ramified.
By Lemma \ref{lem:henselization}{\bf(ii)}, the henselization of $(Kv_{p},\bar{v})$ is $(K^{h}v^{h}_{p},\bar{v}^{h})$.
By the claim, if $(Kv_{p},\bar{v})$ is trivial or separably defectless Kaplansky, so is $(K^{h}v^{h}_{p},\bar{v}^{h})$.
Thus if $(K,v)$ satisfies {\maincaseb}, then $(K^{h},v^{h})$ also satisfies {\maincaseb}.

On the other hand, by Lemma \ref{lem:henselization}{\bf(ii)}, the henselization of $(Kv_{0},\bar{v})$ is $(K^{h}v_{0}^{h},\bar{v}^{h})$.
By the claim, if $(Kv_{0},\bar{v})$ is defectless Kaplansky, so is $(K^{h}v_{0}^{h},\bar{v}^{h})$.
Thus if $(K,v)$ satisfies {\maincasec}, then $(K^{h},v^{h})$ also satisfies {\maincasec}.

We have shown that each of the conditions 
{\maincasea}, {\maincaseb}, and {\maincasec} is preserved by taking the henselization.
Moreover, $Kv$ is equal to $K^{h}v^{h}$, so
one is NIP if and only if so is the other.
Therefore, since $(K,v)$ satisfies the conjunction of {\bf(1)} and {\bf(2)}, so does $(K^{h},v^{h})$.
Since $(K^{h},v^{h})$ is henselian,
it follows
from Theorem \ref{thm:1}
that $(K^{h},v^{h})$ is NIP.
\end{proof}

\section{A model-theoretic version of Theorem \ref{thm:1}}

The aim for this section is to resolve the following: \label{sec:mtv}
\begin{task} Given a complete $\Lring$-theory $T_{k}=\mathrm{Th}(k)$ of NIP fields, describe all of the complete $\Lval$-theories of NIP henselian valued fields $(K,v)$ such that the residue field $Kv$ is 
a model of $T_{k}$.
\end{task}

\subsection*{Equal characteristic} Fix a complete $\Lring$-theory $T_k$ of NIP
fields.
Of course, one complete theory of NIP henselian valued fields with residue field a model of $T_k$
is the theory of $k$ equipped with the trivial valuation.
If $\mathrm{char}(k)=0$, then -- by the Ax--Kochen/Ershov Theorem in equicharacteristic $0$ (\cite[Theorem 5.11]{vdD}) -- for each complete $\Loag$-theory $T_{\rmGamma}=\mathrm{Th}(\rmGamma)$ of non-trivial ordered abelian groups,
there is exactly one complete $\Lval$-theory
of equicharacteristic zero henselian valued fields $(K,v)$
with residue field $Kv\models T_{k}$ and value group $vK\models T_{\rmGamma}$,
namely the theory of $(k((\rmGamma)),v_{\rmGamma})$.
We denote this theory by $T(k,\rmGamma)$.
Vacuously, models of $T(k,\rmGamma)$ are separably defectless and Kaplansky.

If $k$ is of positive characteristic $p$, is perfect and admits no Galois extensions of degree divisible by $p$,
then for each complete $\Loag$-theory $T_{\rmGamma}=\mathrm{Th}(\rmGamma)$ of $p$-divisible non-trivial ordered abelian groups,
and each $e\in\mathbb{N}\cup\{\infty\}$,
by \cite[Th\'{e}or\`{e}me 3.1]{Del82}
there is exactly one complete $\Lval$-theory
of equicharacteristic $p$ henselian separably defectless valued fields
$(K,v)$ of imperfection degree $e$,
and with residue field $Kv\models T_{k}$ and value group $vK\models T_{\rmGamma}$,
which we denote by $T^{\mathrm{sd}}_{e}(k,\rmGamma)$.
Models of $T^{\mathrm{sd}}_{e}(k,\rmGamma)$ will be Kaplansky, by our assumptions on $k$ and $\rmGamma$.
Note that if $k$ is imperfect, it is not the residue field of a Kaplansky valued field, and
hence we necessarily have that $K=k$ and $v$ is the trivial valuation.

By Theorem \ref{thm:1}, these are all the complete $\Lval$-theories of
NIP valued fields in case {\maincasea}. Thus, we have determined all complete 
$\Lval$-theories of NIP henselian valued fields of equal characteristic
with residue field a model of $T_k$.

\subsection*{Mixed characteristic}
A complete theory of henselian valued fields of 
mixed characteristic $(0,p)$ does not only depend on the 
complete theory of the value 
group and the residue field but also on the value of $p$. 
Just like before, we use the standard decomposition to differentiate between
the cases {\maincaseb} and {\maincasec}. As above, we fix a 
complete $\Lring$-theory $T_{k}=\mathrm{Th}(k)$ of NIP fields of characteristic
$p>0$. 

Now, consider a triple $(k,\rmGamma,\gamma)$ where $\rmGamma$ is an ordered abelian group, and $\gamma\in\rmGamma$ is such that $\gamma>0$. Our aim is to 
characterize the complete theories of NIP henselian valued fields 
$(K,v)$ such that
$Kv\equiv k$
and $(vK,v(p))\equiv(\rmGamma,\gamma)$.
Mimicking the standard decomposition, but expressed purely for the ordered abelian group $\rmGamma$, rather than for a valued field,
we let $\rmGamma_{\gamma+}$ be the smallest convex subgroup of $\rmGamma$ 
containing $\gamma$, and let $\rmGamma_{\gamma-}$ be the greatest convex subgroup of $\rmGamma$ not containing $\gamma$.

\subsubsection*{Complete theories at the heart of case {\maincaseb}}
Let $e\in\mathbb{N}\cup\{\infty\}$
and suppose that the image of $\gamma$ is minimum positive in $\rmGamma/\rmGamma_{\gamma-}$.
Note that this is an elementary property of the ordered abelian group $\rmGamma$ with a constant symbol for $\gamma$.
Let $T_{e}(k,\rmGamma,\gamma)$
be the theory of valued fields $(K,v)$ of mixed characteristic $(0,p)$
such that
\begin{enumerate}
\item $Kv\equiv k$,
\item $(vK,v(p))\equiv(\rmGamma,\gamma)$,
\item $(Kv_{p},\bar{v})$ is separably algebraically maximal of 
	imperfection degree $e$, and
\item $(K,v_{p})$ is henselian.
\end{enumerate}
Recall that the valuation $v_{p}$ is $\Lring$-definable in $K$, without parameters,
uniformly for all henselian unramified valued fields $(K,v_{p})$ of mixed characteristic $(0,p)$.
Thus, the above listed properties of $(K,v)$ are $\Lval$-axiomatizable:
the axiomatizability of {\bf(i)} and {\bf(ii)} simply uses $T_{k}$ and $T_{\rmGamma}$,
for {\bf(iii)} and {\bf(iv)} we use the uniform definability of $v_p$ plus (for example) the axioms discussed in \cite[Section 4]{Kuh16}.

\begin{lemma}
If $k$ is infinite, perfect and NIP,
and if $\rmGamma_{\gamma-}$ is $p$-divisible,
then
$T_{e}(k,\rmGamma,\gamma)$ is complete. 
\end{lemma}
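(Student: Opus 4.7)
The plan is to establish completeness by an Ax--Kochen/Ershov argument built on the standard decomposition $v=\bar{v}\circ v_{p}$. Three ingredients enter: Delon's completeness theorem for separably algebraically maximal Kaplansky valued fields of equicharacteristic $p$, the Ax--Kochen/Ershov principle for henselian unramified mixed-characteristic valued fields with possibly imperfect residue field from \cite{AJ2}, and the stable embeddedness of the residue field in such a valuation.

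First I would check that in every $(K,v)\models T_{e}(k,\rmGamma,\gamma)$, the equicharacteristic-$p$ piece $(Kv_{p},\bar{v})$ of the standard decomposition is separably algebraically maximal Kaplansky of imperfection degree $e$, with value group $\equiv\rmGamma_{\gamma-}$ and residue field $\equiv k$. Axiom \textbf{(iii)} supplies separable algebraic maximality and the imperfection degree. For the Kaplansky property: $\rmGamma_{\gamma-}$ is $p$-divisible by hypothesis; $k$ is perfect by hypothesis; and since $k$ is infinite and NIP of characteristic $p$, the theorem of Kaplan--Scanlon--Wagner \cite[Corollary 4.4]{KSW} rules out separable algebraic extensions of $k$ of degree divisible by $p$. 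Delon \cite[Th\'{e}or\`{e}me 3.1]{Del82} then determines the complete $\Lval$-theory of $(Kv_{p},\bar{v})$ from the data $(T_{k},\mathrm{Th}(\rmGamma_{\gamma-}),e)$, all of which are specified by $T_{e}(k,\rmGamma,\gamma)$.

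Next, the hypothesis that $\gamma$ is minimum positive in $\rmGamma/\rmGamma_{\gamma-}$ forces $(K,v_{p})$ to be a henselian unramified valued field of mixed characteristic $(0,p)$, with residue field $Kv_{p}$ and value group $v_{p}K\equiv\rmGamma/\rmGamma_{\gamma-}$ carrying $v_{p}(p)=1$. I would then appeal to the model theory of Cohen rings from \cite{AJ2}---in particular the relative quantifier elimination of \cite[Theorem 11.5]{AJ2} in the enriched language $\mathcal{L}_{\ac,S}$---to deduce that the complete $\Lval$-theory of $(K,v_{p})$ is determined by the theory of the residue field (in $\mathcal{L}_{\ac,S}$, which tracks $p$-independence in the imperfect case and is controlled by the complete theory of $(Kv_{p},\bar{v})$ obtained in step one) together with the theory of $v_{p}K$ with constant $v_{p}(p)$.

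Finally, since $v_{p}$ is $\Lring$-definable in $K$ and $\bar{v}$ is recovered from $v$ and $v_{p}$, the structure $(K,v)$ is bi-interpretable with $(K,v_{p},\bar{v})$. Using stable embeddedness of $Kv_{p}$ as a pure field in $(K,v_{p})$ \cite[Corollary 12.7]{AJ2}, a back-and-forth argument along the lines of the proof of Lemma \ref{lem:IM} glues the extra structure $\bar{v}$ on the residue field to the already determined theory of $(K,v_{p})$, producing a complete $\Lval$-theory for $(K,v)$. I expect the hardest part to be the second step---the AKE for henselian unramified mixed-characteristic valued fields over imperfect residue fields---which is the substantive content of the Cohen ring machinery of \cite{AJ2}; once this is in place, the gluing via stable embeddedness is routine.
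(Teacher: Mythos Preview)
Your proposal is correct and follows essentially the same route as the paper: both use Kaplan--Scanlon--Wagner and Delon to pin down the complete theory of $(Kv_{p},\bar{v})$, then invoke the Cohen-ring AKE machinery of \cite{AJ2} for the henselian unramified piece $(K,v_{p})$. The only difference is cosmetic: the paper compresses your second and third steps into a single citation of \cite[Corollary 12.4]{AJ2}, while you unpack that result into relative quantifier elimination plus stable embeddedness of the residue field.
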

\begin{proof}
Let $(K,v)$ be a model of $T_{e}(k,\rmGamma,\gamma)$.
Since $k$ is infinite, perfect, and NIP of positive characteristic,
it admits no finite extension of degree divisible by $p$.
Thus $(Kv_{p},\bar{v})$ is a separably algebraically maximal Kaplansky valued field of equal characteristic $p$ and imperfection degree $e$.
By \cite[Th\'{e}or\`{e}me 3.1]{Del82},
$T_{e}(k,\rmGamma,\gamma)$ determines the complete theory of $(Kv_{p},\bar{v})$.
Moreover, by \cite[Theorem 1.2]{AJ2} the complete $\Lval$-theory of $(K,v_{p})$ is determined by henselianity and the theories of the residue field and the value group, which all follow from $T_{e}(k,\rmGamma,\gamma)$.
\end{proof}



\subsubsection*{Complete theories at the heart of case {\maincasec}}
Now suppose that $\rmGamma/\rmGamma_{\gamma-}$ is not discrete.
Again, this is an elementary property of the ordered abelian group $\rmGamma$ with a constant symbol for $\gamma$, see the proof of Lemma \ref{lem:discrete}.
We denote by $[\rmGamma]_{\gamma}$ the relative divisible hull in $\rmGamma$ of the subgroup generated by $\gamma$.
Note that $[\rmGamma]_{\gamma}$ will always be a subgroup of $\rmGamma_{\gamma+}$, although it will in general not be a convex subgroup.

We say that a valued field $(F,v_{F})$ is {\em compatible} with $(\rmGamma,\gamma)$ if it is algebraically maximal, an algebraic extension of $(\mathbb{Q},v_{p})$, its residue field $Fv_{F}$ is $\widetilde{\mathbb{F}}_{p}$, and its value group satisfies $(v_{F}F,v_{F}(p))\cong([\rmGamma]_{\gamma},\gamma)$.
If $\rmGamma_{\gamma+}$ is $p$-divisible, then $[\rmGamma]_{\gamma}$ is $p$-divisible, and so in this case $(F,v_{F})$ will be Kaplansky.
Let $T_{}(k,\rmGamma,\gamma)$ be the theory of valued fields $(K,v)$ of mixed characteristic $(0,p)$ such that
\begin{enumerate}
\item $Kv\equiv k$,
\item $(vK,v(p))\equiv(\rmGamma,\gamma)$, and
\item $(K,v)$ is algebraically maximal.
\end{enumerate}
Given compatible $(F,v_{F})$, we let $T_{}(k,\rmGamma,\gamma,F,v_{F})$ be the theory extending $T_{}(k,\rmGamma,\gamma)$ by further requiring of $(K,v)$ that
\begin{enumerate}
\setcounter{enumi}{3}
\item the algebraic part of $(K,v)$ is isomorphic to $(F,v_{F})$.
\end{enumerate}
Again, these properties are obviously $\Lval$-axiomatizable (for {\bf(iii)} see \cite[Section 4]{Kuh16}).
The description of the complete theories of algebraically maximal Kaplansky valued fields of mixed characteristic is well-known, and due independently to Ershov and Ziegler.
However, for lack of a convenient reference, in the proof of the following lemma we rely on more modern sources.

\begin{lemma}\label{lem:Ershov.Ziegler}
If $k$ is infinite, perfect and NIP,
and if $\rmGamma$ is $p$-divisible,
then 
$T_{}(k,\rmGamma,\gamma,F,v_{F})$ is complete.
\end{lemma}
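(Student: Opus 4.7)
The plan is to identify models of $T(k,\Gamma,\gamma,F,v_{F})$ as algebraically maximal Kaplansky (amK) valued fields of mixed characteristic with a prescribed algebraic part, and then to invoke the Ershov--Ziegler completeness theorem for this class.

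First I would verify that every $(K,v)\models T(k,\Gamma,\gamma,F,v_{F})$ is amK. Since $Kv\equiv k$ and $k$ is infinite, perfect and NIP of characteristic $p$, by \cite[Corollary 4.4]{KSW} the residue field $Kv$ admits no finite Galois (hence no finite separable) extensions of degree divisible by $p$, so $Kv$ is perfect and has no finite extensions of $p$-divisible degree. Together with the assumption that $\Gamma$ is $p$-divisible (so $vK$ is $p$-divisible), this places $(K,v)$ in the Kaplansky class in the sense of Definition \ref{def:Kaplansky}. Combined with clause {\bf(iii)}, $(K,v)$ is an amK valued field of mixed characteristic $(0,p)$.

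Second, I would appeal to the Ershov--Ziegler theorem: the complete $\Lval$-theory of an amK valued field $(K,v)$ of mixed characteristic is determined by
\begin{enumerate}
\item the complete $\Lring$-theory of its residue field,
\item the complete $\Loag$-theory of its value group together with the distinguished element $v(p)$, and
\item the isomorphism type of the algebraic part $(K\cap\mathbb{Q}^{\mathrm{alg}},v|_{K\cap\mathbb{Q}^{\mathrm{alg}}})$.
\end{enumerate}
The algebraic part is itself amK, is an algebraic extension of $(\mathbb{Q},v_{p})$ with residue field $\widetilde{\mathbb{F}}_{p}$, and has value group $([\Gamma]_{\gamma},\gamma)$; this matches exactly the data encoded in compatibility of $(F,v_{F})$ with $(\Gamma,\gamma)$. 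The three pieces of data above are precisely what the axioms {\bf(i)}--{\bf(iv)} of $T(k,\Gamma,\gamma,F,v_{F})$ pin down, so any two models of the theory are elementarily equivalent, and the theory is complete.

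The main obstacle is to reference the correct formulation of the Ershov--Ziegler completeness result; its standard statement is for algebraically maximal Kaplansky fields of mixed characteristic with a given residue field theory, value group theory (with $v(p)$), and algebraic part, and the bookkeeping of matching these three pieces of data to the axioms is the only non-trivial step beyond citing known work. A back-and-forth implementation, for a rigorous treatment, would proceed by embedding both models into a sufficiently saturated monster so that the shared algebraic part is identified, and then extending a partial isomorphism transcendence step by transcendence step, using amK at each stage to handle immediate extensions and Kaplansky to ensure uniqueness of the relevant purely wild extensions (as in \cite[Theorem 5.1]{KPR}).
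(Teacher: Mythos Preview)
Your argument is correct and follows the same skeleton as the paper's: first argue that any model is algebraically maximal Kaplansky (using \cite{KSW} on the residue field and $p$-divisibility of $\Gamma$), then invoke an AKE-type completeness result over the common algebraic part $(F,v_{F})$.

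The only real difference is the reference you appeal to. You cite the classical Ershov--Ziegler theorem directly; the paper, remarking that this result lacks a convenient modern reference, instead observes that amK fields of mixed characteristic are \emph{tame} and applies Kuhlmann's relative subcompleteness for tame fields \cite[Theorem 7.3]{Kuh16} over the tame substructure $(F,v_{F})$ to conclude $(K,v)\equiv_{(F,v_{F})}(L,w)$. Kuhlmann's formulation has the advantage of giving a clean citable statement and making the role of the common algebraic part explicit as a base structure, whereas your formulation requires spelling out that the three invariants (residue field theory, value group theory with $v(p)$, algebraic part) are exactly what the axioms fix. Your closing back-and-forth sketch is unnecessary once either result is cited, but it is not wrong.
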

\begin{proof}
Let $(K,v)$ and $(L,w)$ be models of $T(k,\rmGamma,\gamma,F,v_{F})$.
Again we argue that, since $k$ is infinite, perfect, and NIP of positive characteristic,
it admits no finite extension of degree divisible by $p$.
Thus $(K,v)$ and $(L,w)$ are algebraically maximal Kaplansky valued fields of mixed characteristic $(0,p)$: in particular they are tame.
We may identify $(F,v_{F})$---which is also tame---with the algebraic part of both $(K,v)$ and $(L,w)$.
Since the class of tame fields is relatively subcomplete, by \cite[Theorem 7.3]{Kuh16}, we have
\begin{align*}
	(K,v)\equiv_{(F,v_{F})}(L,w).
\end{align*}
In particular $T(k,\rmGamma,\gamma,F,v_{F})$ is complete.
\end{proof}

Given a valued field $(K,v)$ of mixed characteristic $(0,p)$ with value group $\rmGamma$,
we denote by $\rmGamma_{(p)}$ the maximal $p$-divisible convex subgroup of $\rmGamma$, and by $v_{(p)}$ the corresponding coarsening of $v$.
Then $v$ induces a valuation $\bar{v}$ on the residue field $Kv_{(p)}$ which has value group $\rmGamma_{(p)}$,
and the value group of $v_{(p)}$ is isomorphic to $\rmGamma/\rmGamma_{(p)}$.

\begin{lemma}\label{lem:p-define}
There is an $\Lval$-formula $\pi_{p}(x)$ in the language of ordered abelian groups which defines the convex subgroup $\rmGamma_{(p)}$ in any ordered abelian group $\rmGamma$.
Thus the valuation ring corresponding to the valuation $v_{(p)}$ is uniformly $\Lval$-definable, without parameters, in all valued fields.
\end{lemma}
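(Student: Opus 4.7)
My plan is to write down an explicit $\Loag$-formula $\pi_p(x)$ and show directly that it defines $\rmGamma_{(p)}$ in any ordered abelian group $\rmGamma$. The natural candidate is
$$\pi_p(x) \;:=\; \forall y\, \Big(\big((-x \leq y \leq x) \lor (x \leq y \leq -x)\big) \to \exists z\, (pz = y)\Big),$$
which asserts that every element of the closed interval $[-|x|,|x|]$ is $p$-divisible in $\rmGamma$. (The disjunction is just a sign-agnostic way to say $|y|\leq|x|$.) This is a single formula independent of $\rmGamma$, so uniformity is automatic once I verify that it has the right extension.

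One direction is essentially a one-liner: if $x \in \rmGamma_{(p)}$, convexity of $\rmGamma_{(p)}$ gives $[-|x|,|x|] \subseteq \rmGamma_{(p)}$, and $p$-divisibility of $\rmGamma_{(p)}$ then guarantees that every element of this interval is $p$-divisible inside $\rmGamma_{(p)}$, hence inside $\rmGamma$.

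The step I expect to do the real work is the converse: given $\pi_p(x)$, I will show that the convex hull $C(x)$ of $\langle x \rangle$ is itself a $p$-divisible convex subgroup, whence $x \in C(x) \subseteq \rmGamma_{(p)}$ by maximality of the latter. For any $y \in C(x)$ with $y \geq 0$ I fix $n \in \mathbb{N}$ with $y \leq n|x|$, choose the largest natural number $m$ with $m|x| \leq y$, and set $r := y - m|x|$, which lies in $[0,|x|]$. Then $\pi_p(x)$ yields $|x| = pw$ and $r = pz_0$ for some $w, z_0 \in \rmGamma$, giving $y = p(mw + z_0)$; and the witness $mw + z_0$ lies in $C(x)$ since $\vert mw+z_0\vert \leq y \leq n|x|$. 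The case $y < 0$ is symmetric. This quasi-Euclidean-division step is the only subtle point, and it goes through because the set of standard naturals $\{m : m|x| \leq y\}$ is bounded above by $n$.

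For the second assertion, once $\rmGamma_{(p)}$ is uniformly $\Loag$-definable in the value group by $\pi_p$, the valuation ring of the corresponding coarsening $v_{(p)}$ is uniformly $\Lval$-definable in any valued field by
$$x \in \mathcal{O}_{v_{(p)}} \iff \exists \gamma\, \big(\pi_p(\gamma) \land v(x) \geq -\gamma\big),$$
which simply records that $v_{(p)}(x) \geq 0$ in $vK/\rmGamma_{(p)}$ holds precisely when $v(x)$ is bounded below by some element of $\rmGamma_{(p)}$. Since $\pi_p$ used no parameters, neither does this definition.
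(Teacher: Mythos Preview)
Your proposal is correct and uses essentially the same formula as the paper: the paper simply takes $\pi_{p}(x)$ to be $\forall y\,(0\leq y\leq|x|\rightarrow\exists z\;pz=y)$, which is trivially equivalent to your version (you quantify over $[-|x|,|x|]$ rather than $[0,|x|]$, but negation preserves $p$-divisibility). The paper records only this formula and nothing else, whereas you supply the full verification, including the quasi-Euclidean argument showing that the convex hull of $\langle x\rangle$ is $p$-divisible, and an explicit definition of $\mathcal{O}_{v_{(p)}}$; these details are sound and simply make explicit what the paper leaves to the reader.
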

\begin{proof}
Take $\pi_{p}(x)$ to be the formula
$\forall y\;(0\leq y\leq|x|\longrightarrow\exists z\;pz=y)$.
\end{proof}

\begin{lemma}\label{lem:complete.composition}
Let $T$ be a theory of bivalued fields $(K,v',v)$ with $v'$ an equal characteristic zero henselian coarsening of $v$, and suppose that $T$ entails complete theories of valued fields $(K,v')$ and $(Kv',\bar{v})$.
Then $T$ is complete.
\end{lemma}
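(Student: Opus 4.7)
The plan is to reduce the completeness of $T$ to the equicharacteristic zero Ax--Kochen/Ershov theorem with enriched residue field. Each bivalued field $(K,v',v)$ figuring in $T$ is naturally viewed as the equicharacteristic zero henselian valued field $(K,v')$ together with the extra valued-field structure $(Kv',\bar{v})$ placed on its residue field; here $\bar v$ is definable from $v$ and $v'$ in the bivalued language, so the two presentations carry the same information.

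Given two models $(K_1,v_1',v_1)$ and $(K_2,v_2',v_2)$ of $T$, I would first extract from the hypotheses the two pieces of data that feed the enriched AKE machine. Since $T$ entails a complete theory of $(K,v')$, we have $(K_1,v_1')\equiv(K_2,v_2')$, which (via the ordinary Ax--Kochen/Ershov theorem in equicharacteristic zero, \cite[Theorem 5.11]{vdD}) yields in particular $v_1'K_1 \equiv v_2'K_2$ as ordered abelian groups. Since $T$ entails a complete theory of $(Kv',\bar v)$, the residue fields carry elementarily equivalent valuation structures: $(K_1v_1',\bar v_1)\equiv(K_2v_2',\bar v_2)$.

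I would then invoke the relative AKE principle for equicharacteristic zero henselian valued fields with an arbitrary enrichment of the residue field: two such structures are elementarily equivalent provided their value groups are elementarily equivalent (in $\Loag$) and their enriched residue fields are elementarily equivalent (in the enriched language). In our situation the enrichment is precisely the $\Lval$-structure $(Kv',\bar v)$, and both ingredients have just been verified, so we conclude $(K_1,v_1',v_1)\equiv(K_2,v_2',v_2)$, and hence $T$ is complete.

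The only nontrivial ingredient is the enriched AKE principle itself; the paper has already used it implicitly in several places. It is a standard consequence of the fact that in an equicharacteristic zero henselian valued field the residue field is purely stably embedded as an $\Lring$-structure (\cite[Corollary 5.25]{vdD}), so any additional structure placed on $Kv'$ is orthogonal to the valued-field structure on $(K,v')$ in the sense needed for the back-and-forth underlying \cite[Theorem 5.11]{vdD} to go through unchanged. I do not anticipate any genuine obstacle beyond pinning down a clean statement of this enriched version.
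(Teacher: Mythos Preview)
Your proposal is correct and takes essentially the same approach as the paper: both arguments rest on the pure stable embeddedness of the residue field in an equicharacteristic zero henselian valued field (\cite[Corollary 5.25]{vdD}). The paper simply unpacks your ``enriched AKE'' directly---it passes to sufficiently saturated models of the same cardinality, lifts an isomorphism $(Kv',\bar v)\cong(Lw',\bar w)$ to an isomorphism $(K,v')\cong(L,w')$ inducing it (this is exactly where stable embeddedness is used), and observes that such an isomorphism automatically respects $v$ and $w$---whereas you package this step as a relative AKE principle and then sketch its proof via the same back-and-forth.
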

\begin{proof}
Let $(K,v',v)$ and $(L,w',w)$ be two saturated 
models of $T$ of the same cardinality.
By the saturation assumption, 
we may assume that there are
isomorphisms $\psi:(K,v')\longrightarrow(L,w')$ and 
$\phi:(Kv',\bar{v})\longrightarrow(Lw',\bar{w})$.
By stable embeddedness of the residue field in equal characteristic zero as a pure field, there is an isomorphism 
$\chi: (K,v')\longrightarrow(L,w')$
inducing $\phi$. By construction, $\chi$ is also an isomorphism $(K,v)\longrightarrow(L,w)$. Thus, $T$ is complete.
\end{proof}

The next lemma is a very simple modification of
Lemma \ref{lem:Ershov.Ziegler}.

\begin{lemma}
If $k$ is infinite, perfect and NIP,
and if $[-\gamma,\gamma]\subseteq p\rmGamma$,
then
$T_{}(k,\rmGamma,\gamma,F,v_{F})$ is complete.
\end{lemma}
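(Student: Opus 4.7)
The plan is to apply Lemma \ref{lem:complete.composition} to the bivalued structure arising from the standard decomposition $v = \bar{v} \circ v_0$, combining Lemma \ref{lem:Ershov.Ziegler} on the equicharacteristic-$p$ piece $(Kv_0, \bar{v})$ with the equicharacteristic-zero Ax--Kochen/Ershov theorem on the coarsening $(K, v_0)$. The new ingredient, relative to Lemma \ref{lem:Ershov.Ziegler}, lies in handling the weaker hypothesis $[-\gamma, \gamma] \subseteq p\rmGamma$ in place of $\rmGamma$ being $p$-divisible.

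Let $(K, v)$ and $(L, w)$ be models of $T(k, \rmGamma, \gamma, F, v_F)$, passing to sufficiently saturated elementary extensions. Echoing the analogous argument in the proof of Theorem \ref{thm:2}, I first show that $\rmDelta_0 := (vK)_{\gamma+}$ is $p$-divisible. The hypothesis $[-\gamma, \gamma] \subseteq p\rmGamma$ directly gives $\rmDelta_p \subseteq p\rmGamma$ and, by an absolute-value-and-convexity argument, $\rmDelta_p$ is itself $p$-divisible. The archimedean quotient $\rmDelta_0/\rmDelta_p$ is non-discrete---else $\bar{\gamma}$ would be the minimum positive element, contradicting $\gamma \in p\rmGamma$---so by saturation and \cite[\S 4]{AK} it is isomorphic to $\mathbb{R}$ and hence $p$-divisible. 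The short exact sequence $0 \to \rmDelta_p \to \rmDelta_0 \to \rmDelta_0/\rmDelta_p \to 0$ then yields $p$-divisibility of $\rmDelta_0$.

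Next I verify that $(Kv_0, \bar{v})$ is algebraically maximal Kaplansky of mixed characteristic with residue field $k$, value group $\rmDelta_0$, and algebraic part $(F, v_F)$. Algebraic maximality descends from $(K, v)$: any proper algebraic immediate extension of $(Kv_0, \bar{v})$ would lift, via henselianity of $v_0$, to an algebraic immediate extension of $(K, v)$, contradicting its algebraic maximality. The Kaplansky property follows from $p$-divisibility of $\rmDelta_0$ together with $k$ admitting no finite extensions of degree divisible by $p$ (as $k$ is perfect, NIP, and of characteristic $p$). The algebraic part coincides with $(F, v_F)$ because $v_F F = [\rmGamma]_\gamma \subseteq \rmDelta_0$ makes $v_0$ trivial on $F$, and Hensel lifting in the equicharacteristic-zero henselian $(K, v_0)$ then shows every element of $Kv_0$ algebraic over $\mathbb{Q}$ is the residue of an element of $F$. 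By Lemma \ref{lem:Ershov.Ziegler} the theory of $(Kv_0, \bar{v})$ is determined, yielding $(Kv_0, \bar{v}) \equiv (Lw_0, \bar{w})$; meanwhile $(K, v_0) \equiv (L, w_0)$ by equicharacteristic-zero Ax--Kochen/Ershov (\cite[Theorem 5.11]{vdD}). Lemma \ref{lem:complete.composition} then yields completeness of the bivalued theory of $(K, v_0, v)$, from which $(K, v_0, v) \equiv (L, w_0, w)$ and in particular $(K, v) \equiv (L, w)$.

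The main obstacle is the first step: deducing $p$-divisibility of $\rmDelta_0$ from only the rough $p$-divisibility hypothesis, which requires the saturation-and-elementarity argument above because ``$\rmDelta_0 \subseteq p\rmGamma$'' is not first-order in $(\rmGamma, \gamma)$.
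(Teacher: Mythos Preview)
Your approach is essentially correct but takes a different route from the paper's, and it has one small gap and one step left implicit that both deserve comment.

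The paper does \emph{not} decompose via $v_{0}$. Instead it uses the coarsening $v_{(p)}$ corresponding to the maximal $p$-divisible convex subgroup $\rmGamma_{(p)}$, which is uniformly $\Lval$-definable (Lemma~\ref{lem:p-define}). The hypothesis $[-\gamma,\gamma]\subseteq p\rmGamma$ gives $\rmDelta_{0}\subseteq\rmGamma_{(p)}$, so $v_{(p)}$ is an equicharacteristic-zero henselian coarsening, and by construction $(Kv_{(p)},\bar{v})$ has $p$-divisible value group, so Lemma~\ref{lem:Ershov.Ziegler} applies directly. The payoff of definability is that $(vK,v(p))\equiv(wL,w(p))$ automatically yields the required elementary equivalences of $(\rmGamma_{(p)},\gamma)$ and of the quotient $vK/\rmGamma_{(p)}$; no saturation is needed. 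Your route via $v_{0}$ also works, but since $\rmDelta_{0}$ is not definable in $(vK,v(p))$, the equivalences $(\rmDelta_{0}^{K},v(p))\equiv(\rmDelta_{0}^{L},w(p))$ and $vK/\rmDelta_{0}^{K}\equiv wL/\rmDelta_{0}^{L}$ do not follow formally from $(vK,v(p))\equiv(wL,w(p))$. You therefore need your saturated extensions to have the \emph{same cardinality}, so that the pointed value groups are actually isomorphic and the $\rmDelta_{0}$'s and quotients match. ``Sufficiently saturated'' gestures at this, but the point should be explicit.

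Separately, your argument that $\rmDelta_{0}/\rmDelta_{p}$ is non-discrete is not right as written: from $\gamma\in p\rmGamma$ you only deduce that $\bar{\gamma}$ is not minimum positive, which does not exclude $\rmDelta_{0}/\rmDelta_{p}\cong\mathbb{Z}$ with $\bar{\gamma}$ a non-generator. You must iterate: the hypothesis $[-\gamma,\gamma]\subseteq p\rmGamma$ lets you write $\gamma=p^{n}\delta_{n}$ with $0<\delta_{n}<\gamma$ for every $n$, and if $\rmDelta_{0}/\rmDelta_{p}\cong\mathbb{Z}$ this forces $\delta_{n}\in\rmDelta_{p}$ for large $n$, whence $\gamma=p^{n}\delta_{n}\in\rmDelta_{p}$, a contradiction. (Alternatively, non-discreteness of $\rmGamma/\rmGamma_{\gamma-}$ is a standing assumption in the definition of $T(k,\rmGamma,\gamma,F,v_{F})$, and it transfers to the convex subgroup $\rmDelta_{0}/\rmDelta_{p}$.)
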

\begin{proof}
Let $(K,v)$ be a model of $T_{}(k,\rmGamma,\gamma,F,v_{F})$,
and we apply the standard decomposition to $(K,v)$ with the usual notation:
\Kstandarddecomposition
The assumption $[-\gamma,\gamma]\subseteq  p\rmGamma$ entails that $\rmDelta_{0}\subseteq vK_{(p)}$,
which in turn means that $v_{(p)}$ is a coarsening of $v_{0}$.
Then $(Kv_{(p)},\bar{v})$ is of mixed characteristic $(0,p)$ and is algebraically maximal, since the composition $v_{(p)}\circ\bar{v}=v$ is algebraically maximal.
Moreover, its value group $\bar{v}(Kv_{(p)})$ is $p$-divisible and elementarily equivalent to $\rmGamma_{(p)}$, by Lemma \ref{lem:p-define};
and its residue field $(Kv_{(p)})\bar{v}=Kv$ admits no extension of degree divisible by $p$
since it is elementarily equivalent to the infinite NIP field $k$.
Thus $(Kv_{(p)},\bar{v})$ is Kaplansky.
By Lemma \ref{lem:Ershov.Ziegler}, the property `algebraically maximal', together with the theories of $k$ and $(\rmGamma_{(p)},\gamma)$,
determines the complete $\Lval$-theory of $(Kv_{(p)},\bar{v})$.
The $\Lval$-theory of the equal characteristic zero valued field
$(K,v_{(p)})$ is determined by its henselianity and by the theories of $Kv_{(p)}$ and $v_{(p)}K$, by the Ax--Kochen/Ershov Principle (\cite[Theorem 5.11]{vdD}).
From Lemma \ref{lem:complete.composition}, it follows that $T(k,\rmGamma,\gamma,F,v_{F})$ is complete.
\end{proof}

\subsubsection*{NIP henselian valued fields of mixed characteristic}
We now assemble the lemmas from the previous subsections into a list of the complete theories of NIP henselian valued fields of mixed characteristic with residue field elementarily equivalent to $k$.
%
We first recall that the class of finitely ramified henselian valued fields satisfies the $\mathrm{AKE}^{\preceq}$ principle:

\begin{fact}\label{fact:AKE.preceq}
Let $(K,v) \subseteq (L,u)$ be an extension of henselian finitely ramified fields.
\begin{align*}
    \underbrace{(K,v) \preceq (L,w)}_{\text{in }\mathcal{L}_\mathrm{val}}
    \Longleftrightarrow
    \underbrace{vK\preceq wL}_{\text{in }\mathcal{L}_{\mathrm{oag}}}
    \text{ and }
    \underbrace{Kv\preceq Lw}_{\text{in }\mathcal{L}_{\mathrm{ring}}}.
\end{align*}
\end{fact}
\begin{proof}
See
\cite[Theorem 4.3.4]{ErshovMult}
or
\cite[Satz V.5~I)~iii)]{ZieglerDiss}).
\end{proof}

This fact now allows us to view henselian finitely ramified fields with value group
a $\mathbb{Z}$-group essentially (i.e., up to elementary equivalence)
as finite extensions of Cohen fields. 
\begin{lemma}\label{lem:finite_ramification}
Let $(K,v)$ be a henselian finitely ramified valued field of mixed characteristic with value group $vK\equiv\mathbb{Z}$.
Then $(K,v)$ is elementarily equivalent to a finite
extension $(L,u)$ of a Cohen field $(C(Kv),w)$, where $w$ denotes the unique 
non-trivial henselian valuation on $C(Kv)$ of mixed characteristic, and $u$ is its unique prolongation to $L$.
\end{lemma}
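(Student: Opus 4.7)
The plan is to pass to an $\aleph_{1}$-saturated elementary extension $(K^{*},v^{*})\succeq(K,v)$, use the standard decomposition together with Warner's theorem to obtain a complete rank-$1$ subquotient of $(K^{*},v^{*})$, descend it to a finite extension of $(C(Kv),w)$, and conclude via Lemma \ref{lem:AKE.preceq}. First I would let $e\in\mathbb{N}_{\geq 1}$ be such that $v(p)=e\sigma$, where $\sigma$ is the minimum positive element of $vK$ (which exists and is a definable element of $vK$ using any $\pi\in K$ with $v(\pi)=\sigma$). By elementarity $\sigma$ remains minimum positive in $v^{*}K^{*}$, so in the standard decomposition of $(K^{*},v^{*})$ we have $\rmDelta_{0}^{*}=\mathbb{Z}\sigma$ and $\rmDelta_{p}^{*}=\{0\}$. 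Thus $(K^{*}v_{0}^{*},\bar{v}^{*})$ is a henselian rank-$1$ mixed-characteristic valued field with value group $\mathbb{Z}$ and residue field $K^{*}v^{*}$, and by $\aleph_{1}$-saturation it is spherically complete, hence complete. By \cite[22.7 Theorem]{War93} it is a totally ramified extension of $(C(K^{*}v^{*}),w^{*})$ of degree $e$, and a uniformizer is given by $\hat{\pi}:=\res_{v_{0}^{*}}(\pi)$, satisfying $\hat{\pi}^{e}=p\cdot\hat{c}$ with $\hat{c}:=\res_{v_{0}^{*}}(\pi^{e}/p)$ having residue $\bar{c}\in(Kv)^{\times}$.

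Next I would descend to $(C(Kv),w)$: by $Kv\preceq K^{*}v^{*}$ and \cite[Theorem 1.3]{AJ2} the natural embedding $(C(Kv),w)\hookrightarrow(C(K^{*}v^{*}),w^{*})$ is elementary. Lift $\bar{c}$ to $\tilde{c}\in C(Kv)^{\times}$ and find $\alpha\in K^{*}v_{0}^{*}$ with $\alpha^{e}=p\tilde{c}$, using Hensel's lemma applied to the principal unit $\tilde{c}/\hat{c}\in 1+\mathfrak{m}$ when $\gcd(e,p)=1$, and $\aleph_{1}$-saturation combined with a careful choice of lift $\tilde{c}$ when $p\mid e$. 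Setting $(L,u):=(C(Kv)(\alpha),\bar{v}^{*}|_{L})\subseteq(K^{*}v_{0}^{*},\bar{v}^{*})$ then yields a henselian totally ramified extension of $(C(Kv),w)$ of degree $e$, with residue field $Kv$ and value group $\tfrac{1}{e}\mathbb{Z}$. I would then lift $(L,u)$ to a subfield of $K^{*}$ via a coefficient-field cross-section $\iota:K^{*}v_{0}^{*}\hookrightarrow\mathcal{O}_{v_{0}^{*}}\subseteq K^{*}$, which exists because $(K^{*},v_{0}^{*})$ is an equicharacteristic-zero henselian valued field; since $v^{*}=\bar{v}^{*}\circ v_{0}^{*}$ factors through the residue map, $v^{*}$ restricts to $\bar{v}^{*}$ on $\iota(K^{*}v_{0}^{*})$, giving a valued embedding $(L,u)\hookrightarrow(K^{*},v^{*})$. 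Now Lemma \ref{lem:AKE.preceq} applies: $Lu=Kv\preceq K^{*}v^{*}$ by elementarity, and $(uL,u(p))=(\tfrac{1}{e}\mathbb{Z},1)\preceq(v^{*}K^{*},v^{*}(p))$ via $\tfrac{1}{e}\mapsto\sigma$ is elementary since $v^{*}K^{*}/\mathbb{Z}\sigma$ is divisible. Therefore $(L,u)\preceq(K^{*},v^{*})$, and combining with $(K,v)\preceq(K^{*},v^{*})$ yields $(K,v)\equiv(L,u)$.

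The hard part will be the construction of $\alpha$ inside $K^{*}v_{0}^{*}$ in the case $p\mid e$: Hensel's lemma fails for $x^{e}-\tilde{c}/\hat{c}$ because $ex^{e-1}$ vanishes modulo the maximal ideal, so one cannot directly extract an $e$-th root of the principal unit $\tilde{c}/\hat{c}$. To handle this I would vary $\tilde{c}$ within the coset $\bar{c}+\mathfrak{m}_{C(Kv)}$ and use $\aleph_{1}$-saturation of $(K^{*},v^{*})$ to realize the partial type asserting that some such lift $\tilde{c}$ makes $p\tilde{c}$ an $e$-th power in $K^{*}v_{0}^{*}$, provided the type is finitely realized (which follows from the existence of $\hat{\pi}$). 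The remaining ingredients, namely the coefficient-field cross-section in equicharacteristic-zero henselian valued fields and the application of Lemma \ref{lem:AKE.preceq}, are routine given the structural results on Cohen fields of \cite{AJ2}.
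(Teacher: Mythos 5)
Your overall strategy is close to the paper's: pass to an $\aleph_{1}$-saturated elementary extension, use the standard decomposition and Warner's theorem \cite[22.7 Theorem]{War93} to realize $K^{*}v_{0}^{*}$ as a finite extension of $(C(K^{*}v^{*}),w^{*})$, and use a cross-section of the equicharacteristic-zero coarsening together with Lemma \ref{lem:AKE.preceq} to relate the rank-one piece to $(K^{*},v^{*})$. The divergence is that you insist on \emph{explicitly constructing} $(L,u)$ as a subfield of $K^{*}v_{0}^{*}$ of the form $C(Kv)(\alpha)$ with $\alpha^{e}=p\tilde{c}$ for some lift $\tilde{c}\in C(Kv)$ of $\bar{c}$, and this is where a genuine gap opens.

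As you already flag, the Hensel's lemma extraction of the $e$-th root of the principal unit $\tilde{c}/\hat{c}$ only works when $\gcd(e,p)=1$. Your proposed fix for $p\mid e$ --- using $\aleph_{1}$-saturation of $(K^{*},v^{*})$ to realize a type asserting that \emph{some} lift $\tilde{c}$ makes $p\tilde{c}$ an $e$-th power --- does not go through: the variable $\tilde{c}$ is required to range over the \emph{fixed} subfield $C(Kv)$, which is not a definable subset of $K^{*}v_{0}^{*}$ (only $C(K^{*}v^{*})$ might admit a definable description, not its small subfield $C(Kv)$), so saturation cannot be invoked to produce such a $\tilde{c}$. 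There is also a second, more basic issue: even in the tamely ramified case the Eisenstein polynomial of a uniformizer of $K^{*}v_{0}^{*}$ over $C(K^{*}v^{*})$ has no reason to be of the simple form $x^{e}-pc$ with $c$ lying in (or close to) the small Cohen subring, so there is no reason to expect your explicitly constructed $C(Kv)(\alpha)$ to embed as a valued subfield of $K^{*}v_{0}^{*}$ over $C(Kv)$ at all; totally ramified extensions of degree $e$ come in many non-isomorphic flavours.

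The paper sidesteps the whole construction by a purely model-theoretic transfer. Having shown $(K^{*}v_{0}^{*},\bar{v}^{*})\preceq(K^{*},v^{*})$ via the cross-section and Lemma \ref{lem:AKE.preceq}, one uses \cite[Theorem 1.2]{AJ2} to get $(C(K^{*}v^{*}),w^{*})\equiv(C(Kv),w)$, and then simply observes that finite field extensions are uniformly interpretable with parameters, so this elementary equivalence transfers the finite extension $(K^{*}v_{0}^{*},\bar{v}^{*})/(C(K^{*}v^{*}),w^{*})$ to a finite extension $(L,u)/(C(Kv),w)$ with $(L,u)\equiv(K^{*}v_{0}^{*},\bar{v}^{*})$. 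No explicit generator or minimal polynomial is ever produced, and no wild-ramification case split is needed. You should replace the explicit root-extraction step with this abstract interpretability transfer; the rest of your argument (saturation, standard decomposition, Warner, the cross-section, and the applications of Lemma \ref{lem:AKE.preceq} and of the structure theory of \cite{AJ2}) is on the right track.
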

\begin{proof}
Let $(K^*,v^*) \succ (K, v)$ be an
$\aleph_1$-saturated elementary extension.
Then $(K^*,v^*)$ is also finitely ramified with $v^{*}K^{*}\equiv\mathbb{Z}$.
Consider the 
standard decomposition
\Kstarstandarddecompositionfr
of $(K^*,v^*)$.
Since $v^{*}_{0}$ is henselian,
we may apply
\cite[Theorem 2.9]{vdD},
to choose a section $\phi:K^{*}v^{*}_{0}\longrightarrow K^{*}$ of the residue map $\res_{v^{*}_{0}}$ of $v^{*}_{0}$;
this is an embedding of fields such that 
$\res_{v^{*}_{0}}\circ\phi$ is the identity.
In fact $\phi$ is an embedding of valued fields $(K^{*}v^{*}_{0},\bar{v}^{*})\longrightarrow(K^{*},v^{*})$
such that
the extension of residue fields is trivial
and the extension of value groups (with the value of $p$ distinguished) is elementary.
Applying Fact~\ref{fact:AKE.preceq},
we have $(K^{*}v_{0}^{*},\bar{v}^{*})\preceq(K^{*},v^{*})$.
By saturation, $(K^*v_0^*, \bar{v}^*)$ is complete with value group $\Delta_0^* \cong \mathbb{Z}$.
By Fact \ref{fact:Warner}, $(K^*v_0^*, \bar{v^*})$ is a finite extension of a Cohen subfield
$(C(K^*v^*),w^{*})$.
By \cite[Theorem 1.2]{AJ2},
and since $Kv \equiv K^*v^*$ we have 
\begin{align*}
	(C(K^*v^*),w^{*}) \equiv (C(Kv),w),
\end{align*}
where $(C(Kv),w)$ is a Cohen field over $Kv$.
Since $(C(K^{*}v^{*}),w^{*})$ and $(C(Kv),w)$ are elementarily equivalent, the latter admits a finite extension $(L,u)$ to which $(K^{*}v_{0}^{*},\bar{v}^{*})$ is elementarily equivalent.
Putting together this chain of elementary equivalences, this shows that $(K,v)$ is elementarily equivalent to a finite extension of $(C(Kv),w)$.
\end{proof}

We are now in a position to prove the main result of this section:
\begin{proposition}\label{prp:elem}
	Let $T_{k}=Th(k)$ be a complete 
$\Lring$-theory of NIP fields of characteristic $p$.
Let $(K,v)$ be a NIP henselian valued field of mixed characteristic with residue field elementarily equivalent to $k$. Then, each of the following holds:
	\begin{enumerate}[leftmargin=1.0cm]
		\item[{\Shelahcasea}] If $k$ is finite, then $(K,v)$ is $\Lval$-elementarily equivalent to
		a finite extension of a model of $T_{0}(\mathbb{F}_p,\rmGamma,\gamma)$,
		for some $(\rmGamma, \gamma)$ such that $\gamma$ is the minimum
		positive element of $\rmGamma$.\\
		In other words: if $k$ is finite, then
		$(K,v)$ is elementarily equivalent to a (generalized) power series field
		over a finite extension of the $p$-adics where $v$ corresponds to the
		composition of the $p$-adic valuation and the power series valuation.
		\item[{\Shelahcaseb}]If $k$ has imperfection degree $e\in\mathbb{N}_{>0}\cup\{\infty\}$, then $(K,v)$ is
		$\Lval$-elementarily equivalent to
		a finite extension of a model of $T_{e}(k,\rmGamma,\gamma)$,
		for some $(\rmGamma, \gamma)$ such that 
		$\gamma$ is the minimum positive element of
		$\rmGamma$.\\ In other words: if $k$ is imperfect,
		then $(K,v)$ is elementarily equivalent to a (generalized) power series field
		over a finite extension of the Cohen field $C(k)$ where $v$ corresponds to the
		composition of the Cohen valuation and the power series valuation.
		\item[{\Shelahcasec}] If $k$ is perfect and infinite, then $(K,v)$ is either
        	\begin{enumerate}[leftmargin=1.0cm]
        		\item[{\Shelahcaseci}] elementarily equivalent to a finite extension
    			of a model of $T_{e}(k,\rmGamma, \gamma)$, 
    			such that
    			$e \in \mathbb{N}\cup \{\infty\}$, the image of
    			$\gamma$ in $\rmGamma/\rmGamma_{\gamma -}$ is minimum
    			positive and $\rmGamma_{\gamma -}$ is $p$-divisible,
    			
\listintertext{or}

        		\item[{\Shelahcasecii}] a model of 
			$T_{}(k,\rmGamma, \gamma, F,v_{F})$, such that $\rmGamma/\rmGamma_{\gamma -}$ is not 
			discrete, $[-\gamma, \gamma] \subseteq p\rmGamma$, and $(F,v_{F})$ is compatible with $(\rmGamma,\gamma)$;
i.e.,
			$(K,v)$ is a model of any completion of $T_{}(k, \rmGamma, \gamma)$.
	\end{enumerate}
\end{enumerate}
\end{proposition}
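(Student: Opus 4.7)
The plan is to invoke the Main Theorem (Theorem \ref{thm:1}) to classify $(K,v)$ by its standard decomposition, and then to pin down the complete $\Lval$-theory in each case using the completeness lemmas of this section together with Lemma \ref{lem:finite_ramification} and classical AKE results.

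First, Theorem \ref{thm:1} places $(K,v)$ in case \maincaseb~or \maincasec~of the Main Theorem (case \maincasea~being excluded by the mixed characteristic hypothesis). Since separably defectless Kaplansky valuations necessarily have perfect residue fields (by Definition \ref{def:Kaplansky}), I would bifurcate as follows: if $k=Kv$ is finite, condition (3) forces $(K,v)$ itself to be finitely ramified, so $v_{p}=v$, $\bar v$ is trivial, and $Kv_{p}=k$, landing us in sub-case \Shelahcasea; if $k$ is imperfect of degree $e>0$, then the residue field of $\bar v$, which is $k$, cannot be perfect, so $\bar v$ is again trivial and we are in sub-case \Shelahcaseb; if $k$ is perfect and infinite, we remain in \maincaseb~with possibly nontrivial $\bar v$ (giving \Shelahcaseci) or in \maincasec~(giving \Shelahcasecii).

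For the finitely ramified sub-cases \Shelahcasea, \Shelahcaseb, \Shelahcaseci, I would apply the standard decomposition $K\to Kv_{0}\to Kv_{p}\to Kv$, observing that the rank-$1$ coarsening $(Kv_{0},\bar v_{p})$ is henselian finitely ramified with residue field $Kv_{p}$ (which is $k$ in \Shelahcasea~and \Shelahcaseb, or a separably algebraically maximal Kaplansky field over $k$ in \Shelahcaseci) and value group a $\mathbb{Z}$-group. Lemma \ref{lem:finite_ramification} then realizes $(Kv_{0},\bar v_{p})$ as elementarily equivalent to a finite extension of a Cohen field over $Kv_{p}$, which, combined with Proposition \ref{prp:SAMK} and Proposition \ref{prp:unramified_transfer} to handle the Kaplansky and imperfect subcases, identifies it up to $\Lval$-elementary equivalence as a finite extension of a model of $T_{e}(Kv_{p},\mathbb{Z},1)$. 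Composing back with the equicharacteristic-$0$ henselian valuation $v_{0}$ via Delon's Ax--Kochen/Ershov theorem, together with Proposition \ref{prop:JS} and stable embeddedness of the residue field (\cite[Corollary 5.25]{vdD}), lifts this to an elementary equivalence of $(K,v)$ with a finite extension of a model of $T_{e}(k,\Gamma,\gamma)$, where $\Gamma:=vK$, $\gamma:=v(p)$; the minimality of $\gamma$ (in $\Gamma$ for \Shelahcasea~and \Shelahcaseb, or in $\Gamma/\Gamma_{\gamma-}$ for \Shelahcaseci) reflects exactly the finite ramification of $v_{p}$, while the finite extension absorbs both residue-field extensions (needed in \Shelahcasea~to pass from $\mathbb{F}_{p}$ to $k$) and totally ramified pieces.

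For the algebraically maximal sub-case \Shelahcasecii, I would set $\Gamma:=vK$, $\gamma:=v(p)$, and let $(F,v_{F})$ denote the algebraic part of $(K,v)$ inside $K$; since $(Kv_{0},\bar v)$ is defectless Kaplansky and henselian, hence algebraically maximal, the standard decomposition together with the $p$-divisibility conditions coming from Theorem \ref{thm:2} force $(F,v_{F})$ to be compatible with $(\Gamma,\gamma)$ in the sense defined above. That $(K,v)$ is a model of $T(k,\Gamma,\gamma,F,v_{F})$ is then immediate from the axioms, and completeness is precisely Lemma \ref{lem:Ershov.Ziegler} (when $\Gamma$ is $p$-divisible) or its composition variant obtained via Lemma \ref{lem:complete.composition} (when $[-\gamma,\gamma]\subseteq p\Gamma$). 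The main obstacle lies in the bookkeeping of the finitely ramified cases: verifying that the Cohen-ring piece produced by Lemma \ref{lem:finite_ramification}, the finite extension needed to realize the true residue field and ramification, and the equicharacteristic-$0$ coarsening via $v_{0}$ genuinely assemble into an elementary equivalence with a finite extension of a model of $T_{e}(k,\Gamma,\gamma)$—rather than only some $\Lval$-elementarily equivalent structure with matching residue field and value group. The AKE principle for finitely ramified fields (Lemma \ref{lem:AKE.preceq}) will be the decisive tool for this verification.
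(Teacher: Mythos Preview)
Your overall strategy matches the paper's: invoke Theorem \ref{thm:1} to place $(K,v)$ in case {\maincaseb} or {\maincasec}, then use the standard decomposition and the completeness results of this section to identify the theory. The cases {\Shelahcasea}, {\Shelahcaseb}, and {\Shelahcasecii} go through essentially as you describe (though note that Propositions \ref{prp:SAMK} and \ref{prp:unramified_transfer} are NIP transfer results, not completeness results; the relevant completeness inputs are Delon's \cite[Th\'eor\`eme 3.1]{Del82} and \cite[Corollary 12.4]{AJ2}).

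The substantive gap is in {\Shelahcaseci}. Applying Lemma \ref{lem:finite_ramification} to $(Kv_{0},\bar v_{p})$ gives only an $\Lval$-elementary equivalence with a finite extension $(L,u)$ of $(C(Kv_{p}),w)$ \emph{for the valuation $\bar v_{p}$}. But in {\Shelahcaseci} the valuation $\bar v$ on $Kv_{p}$ may be nontrivial, and your target theory $T_{e}(k,\rmGamma,\gamma)$ has residue field $k\equiv Kv$, not $Kv_{p}$. You therefore need to transport $\bar v$ across the elementary equivalence and compose, i.e.\ obtain $(Kv_{0},\bar v)\equiv(L,\bar u\circ u)$ for a suitable $\bar u$ on $Lu$; Lemma \ref{lem:finite_ramification} does not give this, and Lemma \ref{lem:AKE.preceq} concerns only the finitely ramified piece, not the composition with $\bar v$. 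Your intermediate statement ``a finite extension of a model of $T_{e}(Kv_{p},\mathbb{Z},1)$'' has residue field $\equiv Kv_{p}$, and composing with the equicharacteristic-$0$ coarsening $v_{0}$ does not change that to $k$.

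The paper resolves this by passing to an $\aleph_{1}$-saturated $(K^{*},v^{*})$ and applying Warner's theorem \cite[22.7]{War93} directly, so that $C(K^{*}v_{p}^{*})$ sits as a literal subfield of $K^{*}v_{0}^{*}$. The composition $\bar v^{*}\circ w$ is then a genuine valuation on $C(K^{*}v_{p}^{*})$, yielding a concrete finite extension of valued fields with the full valuation $\bar v^{*}$ on both sides; the equicharacteristic-$0$ AKE principle, applied with extra structure on the stably embedded residue field $K^{*}v_{0}^{*}$, then carries everything up to $(K^{*},v^{*})$. In effect you must unfold the proof of Lemma \ref{lem:finite_ramification} rather than cite it as a black box.
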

\begin{proof}
Let $k$ be a NIP field, and $(K,v)$ 
NIP henselian valued field with $Kv \equiv k$. 
Once more, we consider the standard decomposition of $(K,v)$:
\Kstandarddecomposition

\begin{enumerate}[leftmargin=1.0cm]
	\item[{\Shelahcasea}]
	If $k$ is finite, then we have $Kv=k$ and 
	Remark \ref{rem:3.6}
	implies that $(K,v)$ is finitely ramified, so 
	$\rmDelta_p=\{0\}$ and $vK$ has a minimum positive element.
	Thus, $(Kv_0, \bar{v})$ is a henselian valued field of mixed
	characteristic with value group 
	$\mathbb{Z}$ and finite residue field. Applying \cite[Theorem 3.1]{PrR},
	it is elementarily equivalent
	to a finite extension $L$ of the $p$-adics $\mathbb{Q}_{p}$, equipped with the unique extension $w$ of the $p$-adic valuation.
	We now use $G$ to denote the ordered abelian group $vK/\rmDelta_0$.
	By the Ax-Kochen/Ershov Theorem for equicharacteristic $0$ 
	(\cite[Theorem 5.11]{vdD}), 
	$(K,v_0)$ is elementarily equivalent to
	the generalized power series field 
	$(L((G)),v_{G})$.
	Moreover, since $v$ is finitely ramified, it is $\emptyset$-definable
	in the language of rings. Thus, we have 
	$$(K,v) \equiv (L((G)), w \circ v_G),$$
	The latter is a finite extension of $(\mathbb{Q}_p((G)), w\circ v_{G})$.
	The valued field $(\mathbb{Q}_p((G)), w \circ v_{G})$
	a model of $T_{0}(\mathbb{F}_p,\rmGamma,\gamma)$,
	where $\rmGamma$ is the value group of $w\circ v_{G}$ on $\mathbb{Q}_p((G))$ 
	with minimum positive element $\gamma$.

	\item[{\Shelahcaseb}]
	If $k$ has imperfection degree $e\in\mathbb{N}_{>0}\cup\{\infty\}$, then it is not the residue field of
	any Kaplansky valued field. Thus, Theorem \ref{thm:1} implies
	that $(Kv_p, \bar{v})$ is trivially valued and $(K,v_p)$ is finitely ramified.
	In particular, $vK$ has a minimum positive element $\gamma_0$ and
	$(Kv_0, \bar{v})$ is a henselian valued field of 
	mixed characteristic with value group $\mathbb{Z}$ and residue field $k$.
	By Lemma \ref{lem:finite_ramification},
	$(Kv_{0},\bar{v})$ is elementarily equivalent to a finite extension $(L,w)$ of the Cohen field $(C(k),w)$.
	Again, writing $G=vK/\rmDelta_0$, we have $(K,v_0)\equiv (L((G)), v_G)$.
	Once more, the $\Lring$-definability of $v$ implies that
	$$(K,v) \equiv (L((G)), w\circ v_G)$$
	holds.
	Like in the previous case, we note that the 
	extension $$(C(k)((G)),w \circ v_G) \subseteq (L((G)), w \circ v_G)$$
	is finite and that $(C(k)((G)),w \circ v_G)$ is a model of 
	$T_{e}(k,\rmGamma,\gamma)$, where again $\rmGamma$ is the value group of $w\circ v_{G}$ on $C(k)((G))$ with minimum positive element $\gamma$.

	\item[{\Shelahcasec}] Assume that $k$ is infinite and perfect.
	If $(K,v)$ is a henselian valued field of mixed characteristic with residue
	field $Kv\equiv k$, then Theorem \ref{thm:1} implies that one of the following
	holds:
	\begin{enumerate}[leftmargin=1.0cm]
	\item[{\bf (i)}] $(K,v_p)$ is finitely ramified and $(Kv_p, \bar{v})$
	is trivial or separably defectless Kaplansky, i.e.~$(K,v)$ 
		satisfies clause {\maincaseb}, or
	\item[{\bf (ii)}] 
		$(Kv_0, \bar{v})$ is defectless Kaplansky, i.e.~$(K,v)$
	satisfies clause {\maincasec}.
	\end{enumerate}
	We show that cases {\bf(i)} and {\bf(ii)} correspond exactly to 
	{\Shelahcaseci} and {\Shelahcasecii} respectively.

	We first assume that $(K,v_p)$ is finitely ramified and $(Kv_p, \bar{v})$
	is trivial or separably defectless Kaplansky
	of imperfection degree $e$. If $(Kv_p, \bar{v})$ is trivial,
	we have $Kv_p \equiv k$ and hence $Kv_p$ is perfect and admits no Galois extensions
	of degree divisible by $p$. In particular, the trivially valued field 
	$(Kv_p, \bar{v})$ is separably defectless Kaplansky 
	(of imperfection degree $0$). Thus, we may treat these two subcases
	simultaneously.
	Now, let $(K^*,v^*) \succ (K,v)$ be an $\aleph_1$-saturated elementary extension.
	Consider the standard decomposition of $(K^*,v^*)$:
	\Kstarstandarddecomposition
	As $v_p$ is finitely ramified, it is $\emptyset$-definable, and hence we
	also have $(K^*,v^*_p) \succ (K,v_p)$ and $(K^*v^*_p, \bar{v}^*) \succ (Kv_p, \bar{v})$. In particular, $(K^*v^*_p, \bar{v}^*)$ is separably algebraically
	maximal Kaplansky. By saturation, $(K^*v_0^*, \bar{v}_p^*)$ is a complete
	mixed characteristic valued field with value group isomorphic to $\mathbb{Z}$.
	By Fact \ref{fact:Warner},
	$(K^*v_0^*, \bar{v}_p^*)$ is a finite extension of a Cohen subfield $(C(K^*v_p^*),w)$.
	Hence, we get a
	finite extension $$(C(K^*v_p^*), \bar{v}^* \circ w) \subseteq (K^*v^*_0, \bar{v}^*
	\circ \bar{v}^*_p)$$
	of valued fields. 
	This gives rise to a finite extension 
	$$(C(K^*v_p^*)((G)), \bar{v}^* \circ w \circ v_G) \subseteq (K^*v^*_0 ((G)), \bar{v}^*
	\circ \bar{v}^*_p \circ v_G)$$
	of valued fields, where $G=v^*K^*/\Delta_0^*$ and $v_G$ denotes the corresponding power series
	valuation in both cases.
	Since $\bar{v}^*$ is separably algebraically
	maximal Kaplansky of imperfection degree $e$ on $K^*v_p^*$,
	$(C(K^*v_p^*)((G)), u)$ is a model of $T_{e}(k,\rmGamma, \gamma)$,
	where $u=\bar{v}^* \circ w \circ v_G$ and $\rmGamma$ denotes the value group of $u$ on $C(K^*v_p^*)((G))$ and $\gamma=u(p)$.  
	Since the Ax-Kochen/Ershov principle in equicharacteristic $0$ still
	holds if one adds structure on the (purely stably embedded) residue field
	(i.e., $K^*v^*_0$),
	we have 
	$$(K^*,v^*) \equiv (K^*v^*_0 ((G)), \bar{v}^*
	\circ \bar{v}^*_p \circ v_G).$$
	Thus, $(K,v)$ is indeed elementarily equivalent to a finite extension
	of a model of $T_e(k,\rmGamma,\gamma)$ as desired.

	Finally, if $(Kv_0, \bar{v})$ is defectless Kaplansky, then 
	in particular $(K,v)$ is algebraically maximal.
	Therefore,  $(K,v)$ is a model of $T(k,vK,v(p))$.
	Moreover, $\rmDelta_0$ is $p$-divisible,
	and so the inclusion $[-v(p), v(p)]\subseteq p\cdot vK$ holds and $vK/\rmDelta_p$ is not discrete. 
	\qedhere
\end{enumerate}
\end{proof}

\section{{A refinement of Conjecture \ref{sh}}}

The last result of this paper is a reformulation of Conjecture \ref{sh}.
Theorem \ref{thm:3} below gives a list of first-order theories of valued henselian fields such that, if Conjecture \ref{sh} holds, 
any NIP field $K$ admits a henselian valuation $v$ such that 
$(K,v)$ is a model of one of the theories on the list.
Since all the theories appearing in Theorem \ref{thm:3} are either complete or their completions can easily be described (cf.~Remark \ref{rem:complete}) and
moreover (by Theorem \ref{thm:1}) NIP, Theorem \ref{thm:3}
gives a converse of sorts for Conjecture \ref{sh}.

Recall from the previous section that for an ordered abelian group $\Gamma$ and $\gamma \in \Gamma$, we use $\Gamma_{\gamma-}$ to denote the maximal convex subgroup not containing $\gamma$ and $\Gamma_{\gamma+}$
to denote the minimal convex subgroup containing $\gamma$.

\begin{theorem}\label{thm:3}
Suppose that Conjecture \ref{sh} holds.
If a field $K$ is NIP then it is finite or
admits a henselian valuation $v$,
such that $(K,v)$ is
    \begin{enumerate}[label={\textbf{\textsc{(\roman*)}}}]
        \item
        a model of~$T(\mathbb{C},\rmGamma)$, or equivalently, $(K,v)\equiv(\mathbb{C}((\Gamma)),v_{\rmGamma})$,
        \item
        a model of
        $T(\mathbb{R},\rmGamma)$, or equivalently, $(K,v)\equiv(\mathbb{R}((\Gamma)),v_{\rmGamma})$,
        \item
        a model of
        $T^{\mathrm{sd}}_{e}(\widetilde{\mathbb{F}}_{p},\rmGamma)$,
        for $e\in\mathbb{N}\cup\{\infty\}$,
        and where
        $\rmGamma$ is $p$-divisible. In particular, in case $K$ is perfect,
        we have $(K,v) \equiv (\widetilde{\mathbb{F}_p}((\Gamma)),v_{\rmGamma})$,
        \item
        elementarily equivalent to a finite extension of a model of
        $T_{0}(\mathbb{F}_{p},\rmGamma,\gamma)$,
        where $\gamma$ is minimum positive in $\rmGamma$,
        or equivalently, $(K,v) \equiv (L((\Delta)), w \circ v_{\Delta})$
        where $\Delta=\Gamma/\Gamma_{\gamma+}$ and $(L,w)$ is a finite extension of $(\mathbb{Q}_p,w)$,
        \item
        elementarily equivalent to a finite extension of a model of
         $T_{e}(\widetilde{\mathbb{F}}_{p},\rmGamma,\gamma)$,
        where the image of $\gamma$ is minimum positive in $\rmGamma/\rmGamma_{\gamma-}$,
        and $\rmGamma_{\gamma-}$ is $p$-divisible, or equivalently,
        $(K,v) \equiv (L, \nu \circ w)$
        where $(L,w)$ is a 
        finitely ramified\footnote{Recall that our definition of finitely ramified does not require the value group to have rank $1$, cf.~p.~2.} henselian valued field with value group 
        $\rmGamma/\rmGamma_{\gamma-}$ and
        with residue field $k$ and such that
        $(k,\nu) \models  T^{\mathrm{sd}}_{e}(\widetilde{\mathbb{F}}_{p},\rmGamma_{\gamma-})$,
        \item
        a model of 
        $T^{}(\widetilde{\mathbb{F}}_{p},\rmGamma,\gamma)$,
        where
        $\rmGamma_{\gamma+}$ is $p$-divisible.
    \end{enumerate}
\end{theorem}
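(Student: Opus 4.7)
The plan is to combine Conjecture~\ref{sh} with Proposition~\ref{prp:elem}, applying the conjecture possibly iteratively on the residue field to reduce it to a canonical form. Let $K$ be an infinite NIP field. By Conjecture~\ref{sh}, $K$ is separably closed, real closed, or admits a non-trivial henselian valuation. If $K\equiv\mathbb{R}$ or $K\equiv\mathbb{C}$, take $v$ trivial to land in case (ii) or (i) respectively. If $K$ is separably closed of positive characteristic $p$, take $v$ trivial if $K$ is perfect (giving case (iii) with $\rmGamma$ trivial and $e=0$), or if $K$ is imperfect choose a henselian valuation on $K$ with residue field $\equiv\widetilde{\mathbb{F}}_p$ (such exists because $K\supseteq\widetilde{\mathbb{F}}_p$ and every valuation on a separably closed field is henselian), giving case (iii) with non-trivial $\rmGamma$.

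In the remaining case, $K$ admits a non-trivial henselian valuation and is neither separably nor real closed. I would take $v$ to be the canonical henselian valuation $v_K$ on $K$; by known definability results for canonical henselian valuations in NIP fields, $v_K$ is $\emptyset$-definable in $\Lring$, so $(K,v_K)$ is NIP in $\Lval$. By the defining property, the residue field $Kv_K$ is either separably closed or admits no non-trivial henselian valuation; applying Conjecture~\ref{sh} to the NIP field $Kv_K$ then forces it to be finite, separably closed, or real closed. If $Kv_K$ is separably closed but imperfect, further compose with a henselian valuation on $Kv_K$ having residue field $\equiv\widetilde{\mathbb{F}}_p$; by Lemma~\ref{lem:henselization} the composition is henselian, and a definability argument (or Proposition~\ref{prop:JS}) ensures NIP is preserved.

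With $(K,v)$ now NIP henselian and $Kv$ elementarily equivalent to one of $\mathbb{C}$, $\mathbb{R}$, $\widetilde{\mathbb{F}}_p$, or $\mathbb{F}_p$, apply Theorem~\ref{thm:1} to classify $(K,v)$ among {\maincasea}, {\maincaseb}, {\maincasec}, then Proposition~\ref{prp:elem} to identify a concrete complete $\Lval$-theory. Matching to (i)--(vi) is a direct case check by $\mathrm{char}(K)$ and $\mathrm{char}(Kv)$: equicharacteristic zero with $Kv\equiv\mathbb{C}$ (resp.\ $\mathbb{R}$) gives (i) (resp.\ (ii)); equicharacteristic $p$ gives (iii); mixed characteristic with $Kv$ finite gives (iv) via {\Shelahcasea}; mixed characteristic with $Kv\equiv\widetilde{\mathbb{F}}_p$ and $(K,v_p)$ finitely ramified gives (v) via {\Shelahcaseci}; and the remaining mixed-characteristic case ({\maincasec} with $(Kv_0,\bar v)$ defectless Kaplansky) gives (vi) via {\Shelahcasecii}. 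The main obstacle is the construction of $v$ itself: ensuring simultaneously that $(K,v)$ is NIP \emph{and} that $Kv$ has been reduced to one of the four canonical fields relies on the definability of the canonical henselian valuation together with a possibly iterative application of Conjecture~\ref{sh} to the residue field.
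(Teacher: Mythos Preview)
Your approach is essentially the same as the paper's: take $v$ to be the canonical henselian valuation, apply Conjecture~\ref{sh} to the residue field, compose with a further henselian valuation on the residue field when that residue field is separably closed but not yet $\widetilde{\mathbb{F}}_{p}$, and then invoke Theorem~\ref{thm:1} and Proposition~\ref{prp:elem} for the final case analysis.

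The one genuine gap is your justification that $(K,v_{K})$ is NIP. You write that ``by known definability results for canonical henselian valuations in NIP fields, $v_{K}$ is $\emptyset$-definable in $\Lring$''. This is not known in the required generality; the canonical henselian valuation need not be $\emptyset$-definable. The paper instead cites \cite[Theorem~B]{Jah19}, which establishes directly that $(K,v_{K})$ is NIP whenever $K$ is NIP (and not separably closed), without going through definability. The same issue recurs when you compose with a further valuation $w$ on $Kv_{K}$: your appeal to Proposition~\ref{prop:JS} requires stable embeddedness of $Kv_{K}$ as a pure field in $(K,v_{K})$, which can fail for finitely ramified valuations (cf.\ \cite[Example~12.9]{AJ2}). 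Again the clean fix is to apply \cite[Theorem~B]{Jah19} directly to the composed henselian valuation $w\circ v_{K}$ on the NIP field $K$. With that citation in place of your definability claims, your argument goes through and matches the paper's.
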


Before we prove Theorem \ref{thm:3}, we comment on the extent to which
the theories occurring in the statement are complete and, if not, how to complete them:
\begin{remark}
It follows from Theorem \ref{thm:1} that all of the valued fields in cases
{\textbf{\textsc{(i)}}}--{\textbf{\textsc{(vi)}}}
are NIP. \label{rem:complete}
In particular, assuming Shelah's conjecture, the list
{\textbf{\textsc{(i)}}}--{\textbf{\textsc{(vi)}}}
in Theorem \ref{thm:3} gives a classification of theories of NIP fields.

Each theory appearing in cases \textbf{\textsc{(i)}}--\textbf{\textsc{(v)}} is complete.
Case {\textbf{\textsc{(vi)}}} of the previous theorem deals with the theories $T(\widetilde{\mathbb{F}}_{p},\rmGamma,\gamma)$, where $(\rmGamma,\gamma)$ is such that $\rmGamma_{\gamma+}$ is $p$-divisible.
By Lemma \ref{lem:Ershov.Ziegler}, the completions of such theories are exactly the theories $T(\widetilde{\mathbb{F}}_{p},\rmGamma,\gamma,F,v_{F})$, where $(F,v_{F})$ is compatible with $(\rmGamma,\gamma)$.
The class of valued fields $(F,v_{F})$ compatible with some such $(\rmGamma,\gamma)$ admits a simple algebraic description.
Namely, it is the class of tame valued fields $(F,v_{F})$ which are algebraic extensions of the maximal unramified extension $\mathbb{Q}_{p,\mathrm{alg}}^{\mathrm{unram}}$ of $\mathbb{Q}_{p,\mathrm{alg}}$, equipped with the unique extension of the $p$-adic valuation.
In this way, Theorem \ref{thm:3} even provides a conjectural classification of the complete theories of NIP fields.
\end{remark}

\begin{proof}[Proof of Theorem \ref{thm:3}]
Assume that $K$ is an infinite NIP field
which is neither separably closed nor real closed.
By Conjecture \ref{sh},
$K$ admits a non-trivial henselian valuation $v$.
Without loss of generality, we may assume that $v$ is the canonical henselian valuation on $K$.
By
\cite[Theorem B]{Jah19}%
\footnote{Although in this proof we reference \cite{Jah19} which in turn references the present paper, the argument is not circular since \cite{Jah19} only refers to results in Section 4 of the present paper.}%
,
$(K,v)$ is NIP,
and in particular the residue field $k:=Kv$ is NIP.
Since $v$ is the canonical henselian valuation, $k$ is either separably closed or not henselian.
Since $k$ is also NIP, applying Conjecture \ref{sh} once more
yields that $k$ is either separably closed, real closed, or finite.

Applying Theorem \ref{thm:1}, $(K,v)$ lies in case {\maincasea}, {\maincaseb}, or {\maincasec} of that theorem.
If $\mathrm{char}(k)=0$, then $(K,v)$ lies in case {\maincasea}
and we must have that $k$ is algebraically closed or real closed,
i.e.~elementarily equivalent to either $\mathbb{C}$ or $\mathbb{R}$.
Note that in neither case is the value group $\rmGamma:=vK$ divisible, since otherwise $K$ is already algebraically closed or real closed, contrary to our assumption.
If $k$ is algebraically closed, $(K,v)$ is a model of $T(\mathbb{C}, \rmGamma)$, so
{\textbf{\textsc{(i)}}}
holds. 
If $k$ is real closed, $(K,v)$ is a model of $T(\mathbb{R},\rmGamma)$, so
{\textbf{\textsc{(ii)}}}
holds.

Next, if $\mathrm{char}(K)=p>0$, then $(K,v)$ again lies in case {\maincasea} from \ref{thm:1},
and we must have that $k$ is separably closed or finite.
Moreover, since $v$ is non-trivial, {\maincasea} implies that
$(K,v)$ is separably defectless Kaplansky. In particular,
$\rmGamma$ is $p$-divisible and
$k$ is perfect and Artin-Schreier closed,
and so $k$ is algebraically closed.
It follows that $(K,v)$ is a model of
$T_{e}^{\mathrm{sd}}(\widetilde{\mathbb{F}}_{p},\rmGamma)$, where $e$ denotes the imperfection degree of $K$,
and hence
{\textbf{\textsc{(iii)}}}
holds.

We now turn to the case that $(K,v)$ is of mixed characteristic $(0,p)$, with $p>0$.
In this case, $(K,v)$ lies in case {\maincaseb} or case {\maincasec} from \ref{thm:1}.
Also, $k$ is either separably closed or finite.
If $k$ is finite, then
by Proposition \ref{prp:elem}{\Shelahcasea},
$(K,v)$ is $\Lval$-elementarily equivalent to a finite extension of a model of
$T_{0}(\mathbb{F}_{p},\rmGamma,\gamma)$,
for some pair $(\rmGamma,\gamma)$ such that $\gamma$ is minimum positive in $\rmGamma$.
This verifies
{\textbf{\textsc{(iv)}}}.
Suppose now that $k$ is separably closed.
Let $w$ denote a finest valuation on $k$, i.e.~one which admits no proper refinement.
Such valuations always exist, 
and in this case $w$ is henselian and the residue field $kw$ is $\widetilde{\mathbb{F}}_{p}$.
By Proposition \ref{prp:elem}{\Shelahcasec},
there is a dichotomy between cases
{\Shelahcaseci}
and
{\Shelahcasecii}.
In case 
{\Shelahcaseci},
$(K,v)$ is $\Lval$-elementarily equivalent to a finite extension of a model of
$T_{e}(\widetilde{\mathbb{F}}_{p},\rmGamma,\gamma)$
such that $e\in\mathbb{N}\cup\{\infty\}$
and the image of $\gamma$ in $\rmGamma/\rmGamma_{\gamma-}$ is minimum positive,
and $\rmGamma_{\gamma-}$ is $p$-divisible.
This verifies
{\textbf{\textsc{(v)}}}.
In case 
{\Shelahcasecii},
$(K,v)$ is a model of
$T_{}(\widetilde{\mathbb{F}}_{p},\rmGamma,\gamma)$
and $[-\gamma,\gamma]\subseteq p\rmGamma$, which implies that $\rmGamma_{\gamma+}$ is $p$-divisible.
This verifies
{\textbf{\textsc{(vi)}}}.
\end{proof}

\section*{Acknowledgements}
We would like to thank Philip Dittmann, Yatir Halevi and Assaf Hasson for various
helpful discussions. Many ideas contained in this article came out of discussions
during the \emph{Model Theory,
Combinatorics and Valued fields} trimester at the 
Institut Henri Poincar\'e
and the Centre \'{E}mile Borel,
and we would like to extend our thanks to the organisers.
Sylvy Anscombe was also partially supported by The Leverhulme Trust under grant RPG--2017--179 and by a visiting scholarship at St John's College, Oxford. 
Franziska Jahnke was also funded by the Deutsche Forschungsgemeinschaft 
(DFG, German Research Foundation) under Germany’s Excellence Strategy
EXC 2044--390685587, Mathematics M\"unster: Dynamics--Geometry--Structure and
the CRC878, as well as by a Fellowship from the Daimler and Benz Foundation.
Last but not least, we would like to thank the anonymous referee for numerous helpful comments 
and suggestions.

\def\bibfont{\footnotesize}
\bibliographystyle{plain}

\end{document}